\newtheorem{theorem}{Theorem}[section]
\newtheorem{lemma}{Lemma}
\newtheorem{corollary}{Corollary}
\newtheorem{remark}{Remark}[section]
\author{Mirko D'Ovidio}
\title{Fractional Boundary Value Problems and Elastic Sticky Brownian Motions, I: The half line}
\begin{document}
\maketitle

\begin{abstract}
We extend the results obtained in \cite{Dov22} by introducing a new class of boundary value problems involving non-local dynamic boundary conditions. We focus on the problem to find a solution to a local problem on a domain $\Omega$ with non-local dynamic conditions on the boundary $\partial \Omega$. Due to the pioneering nature of the present research, we propose here the apparently simple case of $\Omega=(0, \infty)$ with boundary $\{0\}$ of zero Lebesgue measure. Our results turn out to be instructive for the general case of boundary with positive (finite) Borel measures. Moreover, in our view, we bring new light to dynamic boundary value problems and the probabilistic description of the associated models.  
\end{abstract}

\maketitle

\tableofcontents

\vspace{1cm}

{\bf Keywords and phrases:} Sticky Brownian motions, dynamic boundary conditions, fractional boundary value problems, fractional derivatives, time changes
\\

{\bf MCS2020 subject classifications:} 60J50 ; 60J55 ; 35C05 ; 26A33

\section{Introduction}

\subsection{Presentation of the results}

We study the solution to the (local) heat equation subjected to a non-local dynamic boundary condition. The non-local condition is written in terms of the Caputo-Dzherbashian fractional derivative defined as the convolution operator
\begin{align*}
D^\alpha_t \varphi(t) = (\varphi^\prime * \kappa)(t), \quad \alpha \in (0,1)
\end{align*}
involving the singular kernel $\kappa$ described below (see Section \ref{sec:CD-der}). Our results can be generalized by considering a general kernel $\kappa$ and therefore a non-local operator (see $D^\Phi_t$ below) as defined in \cite{Koc2011,Toaldo2015,Chen17,Kolo19}. Here we maintain our focus on $D^\alpha_t$ and refer to \cite{BonColDov} for general non-local boundary conditions. The Caputo-Dzherbashian derivative has been introduced in \cite{caputoBook,CapMai71,CapMai71b} by the first author and separately in a series of works starting from \cite{Dzh66,DzhNers68} by the second author.

The fractional Boundary condition introduces an anomalous behaviour of the associated process only on the lower dimensional space given by the boundary. In the present paper we extend the result given in \cite{Dov22} and provide a discussion in case a Brownian motion on $(0, \infty)$ has an anomalous behaviour only at the boundary point $\{0\}$. An interesting formulation of the problem would be given for an arbitrary domain $\overline{\Omega} = \Omega \sqcup \partial \Omega$ as the problem to find a solution $u$ such that
\begin{equation}
\label{problemINTRO}
\left\lbrace
\begin{array}{l}
\displaystyle \frac{\partial u}{\partial t} = A u \quad \textrm{on } \Omega,\\
\displaystyle \mathfrak{u} = Tu,\\
\displaystyle  D^\alpha_t \mathfrak{u} = Bu  \quad \textrm{on } \partial \Omega,\\
\displaystyle  u_0 = f
\end{array}
\right.
\end{equation}
where the trace operator $T$ must be well-defined on $L^2(m_\partial)$ for a suitable finite Borel measure $m_\partial$ supported on $\partial \Omega$. Thus, we may focus on the problem to find a continuous kernel for $u$ on $L^2(\overline{\Omega}, m)$ where $m$ is obtained as the sum of the Lebesgue measure $dx$ and the surface measure $m_\partial$, that is $L^2(\overline{\Omega}, m)= L^2(\Omega, dx) \oplus L^2(\partial \Omega, m_\partial)$. Notice that the non-local boundary condition says that we can not obtain a semigroup for the problem \eqref{problemINTRO}. Although the problem \eqref{problemINTRO} is interesting, there are no results on this problem and we are lead to approach the problem by first considering the instructive case $\overline{\Omega}=[0, \infty)$ with $\partial \Omega = \{0\}$. For a smooth domain $\Omega \subset \mathbb{R}^d$ with $d>1$ we refer to the work \cite{FBVParXiv}. For $\Omega=[0, \infty)$ we refer to \cite{Dov22} where the fractional dynamic boundary condition has been introduced for a problem with constant initial datum. In order to prepare our discussion we first fix below some notation and ideas. \\

As usual $\mathbf{E}_x$ is the expectation with respect to a probability measure $\mathbf{P}_x$ with $x$ denoting the starting point for the process we are dealing with. We also use the following notation:
\begin{align*}
\dot{\varphi} = \frac{\partial \varphi}{\partial t}, \quad \varphi^\prime = \frac{\partial \varphi}{\partial x}, \quad \varphi^{\prime \prime} = \frac{\partial^2 \varphi}{\partial x^2}.
\end{align*}

Let $\eta, \sigma, c$ be positive constants. We consider the following processes:
\begin{itemize}
\item[i)] $X^+= \{X^+_t\}_{t\geq 0}$ is a reflecting Brownian motion on $[0, \infty)$ with boundary local time $\gamma^+ = \{\gamma^+_t\}_{t\geq 0}$;
\item[ii)] $X^\dagger = \{X^\dagger_t\}_{t\geq 0}$ is a Brownian motion on $(0, \infty)$ killed upon reaching the boundary $\{0\}$ for which
\begin{align*}
\mathbf{E}_x[f(X^\dagger_t)] = \mathbf{E}_x[f(X^+_t), t< \tau_0], \quad t>0,\; x \in (0,\infty)
\end{align*}
where $\tau_0 = \inf \{t \geq 0\,:\, X^+_t =0 \}$;
\item[iii)] $X= \{X_t\}_{t\geq 0}$ is an elastic sticky Brownian motion on $[0, \infty)$ with boundary local time $\gamma=\{\gamma_t\}_{t \geq 0}$. The term \lq\lq elastic sticky \rq\rq is referred to the fact that, for $c\geq 0$,
\begin{align*}
\mathbf{E}_x[f(X^{el}_t)] = \mathbf{E}_x[f(X^+_t) M_t], \quad M_t := \exp(-c \gamma^+_t)
\end{align*}
is the well-known representation of the elastic Brownian motion $\{X^{el}_t\}_{t\geq 0}$ in terms of $X^+$ and the multiplicative functional $M^c_t$. Passing through the random time change $V_t = t + (\eta/\sigma) \gamma^+_t$ we therefore use the representation (a well-known result given in \cite{ItoMcK})
\begin{align*}
\mathbf{E}_x[f(X_t)] = \mathbf{E}_x[f(X^{el} \circ V^{-1}_t)]
\end{align*} 
where $V^{-1}_t := \inf\{s\,:\, V_s >t\}$ is the inverse of $V_t$. We recall the associated boundary condition
\begin{align}
\label{BCintro}
\eta \varphi^{\prime \prime} = \sigma \varphi^\prime - c \varphi
\end{align}
which is termed Wentzell-Robin or Feller-Robin boundary condition and includes a (pure) sticky condition obtained as $\eta \to \infty$ and an elastic condition obtained as $\eta \to 0$. We say that $X$ has generator $(G, D(G))$ where $G\varphi = \varphi^{\prime \prime}$ and 
\begin{align*}
D(G) = \{\varphi \in C^2([0, \infty))\,:\, \varphi \textrm{ satisfies \eqref{BCintro} at the boundary point $\{0\}$}\}.
\end{align*}
Let us write $\gamma^+_{t}=\gamma^+_{t,0}$ where $\gamma^+_{t,z}$ is the jointly continuous local time of $X^+$ for the level $z \in [0, \infty)$ and time $t\geq 0$ for which, given a bounded measurable function $f:[0, \infty) \mapsto [0, \infty)$, the following occupation time formula holds
\begin{align*}
\int_0^t f(X^+_s)ds = \int_0^\infty f(z)\gamma^+_{t,z}dz, \quad t\geq 0.
\end{align*}
The boundary point $\{0\}$ must be carefully treated. We therefore consider the measure  
\begin{align*}
m(dz) = dz + (\eta/\sigma) \delta_0(dz)
\end{align*}
on $\overline{\Omega} = [0, \infty)$ given by the sum of the Lebesgue measure on $(0, \infty)$ and the Dirac measure at $\{0\}$. We observe that
\begin{align}
\label{Vbym}
\int_{[0, \infty)} \gamma^+_{t,z} m(dz) = \int_0^\infty \gamma^+_{t,z} dz + (\eta/\sigma) \gamma^+_{t,0} = t + (\eta/\sigma) \gamma^+_{t}=:V_t
\end{align}
gives the random time introduced above;
\item[iv)] $H=\{H_t\}_{t\geq 0}$ is a stable subordinator of order $\alpha \in (0,1)$;
\item[v)] $L=\{L_t\}_{t\geq 0}$ is the inverse $L_t := \inf\{s \geq 0\,:\, H_s >t\}$ to $H$;
\item[vi)] $\bar{X} = \{\bar{X}_t\}_{t \geq 0}$ with boundary local time $\bar{\gamma}= \{\bar{\gamma}_t\}_{t\geq 0}$ introduced and discussed below in the present section.
\end{itemize}
We are now ready to discuss our results. \\

Let $\eta, \sigma, c$ be positive constants. We consider the following problems:
\begin{align}
\label{P1-FIVP}
\tag{P1}
\left\lbrace
\begin{array}{ll}
\displaystyle D^\alpha_t v(t,x) = v^{\prime \prime}(t,x), \quad t>0, \, x >0\\
\\
\displaystyle \eta v^{\prime \prime}(t,0) = \sigma v^{\prime}(t,0) - c\, v(t, 0), \quad t>0\\
\\
\displaystyle v(0,x) = f(x), \quad x \geq 0
\end{array}
\right.
\end{align}
\begin{align}
\label{P2-FP}
\tag{P2}
\left\lbrace
\begin{array}{ll}
\displaystyle D^\alpha_t w(t,x) = w^{\prime \prime}(t,x), \quad t>0, \, x >0\\
\\
\displaystyle \eta D^\alpha_t w(t,0) = \sigma w^\prime(t,0) - c\, w(t, 0), \quad t>0\\
\\
\displaystyle w(0,x) = f(x), \quad x \geq 0
\end{array}
\right.
\end{align}
\begin{align}
\label{P3-FBVP}
\tag{P3}
\left\lbrace
\begin{array}{ll}
\displaystyle \dot{u}(t,x) = u^{\prime \prime}(t,x), \quad t>0, \, x >0\\
\\
\displaystyle \eta D^\alpha_t u(t,0) = \sigma u^\prime(t,0) - c\, u(t, 0), \quad t>0\\
\\
\displaystyle u(0,x) = f(x), \quad x \geq 0
\end{array}
\right.
\end{align}

The problem \eqref{P1-FIVP} is a fractional initial value problem (FIVP) also called fractional Cauchy problem (FCP). It has been investigated by many researchers, see for example \cite{Koc89,Kiryakova94,OB09,MNV09,Baz2000,BaeMee2001,Dov12,GLY15,DovNan16,MLP2001,luchko20}. All these works basically consider the semigroup associated with $\alpha=1$ as a base object to deal with. Then, for $\alpha \in (0,1)$, the solution to the FCP comes out, via integration of that semigroup with a suitable kernel. From the probabilistic point of view we have a time change for a Markov (base) process where the new random time is given by $L$. The composition is not Markov and the associated operator is not a semigroup. 

Our results have some impact on occupation measures, then we think the following discussion will help the readers. Let $Y=\{Y_t\}_{t\geq 0}$ be a Markov process with generator $(A_Y, D(A_Y))$. Then, the solution to the FCP 
\begin{align}
D^\alpha_t \varphi = A_Y \varphi, \quad \varphi_0= f \in D(A_Y)
\label{FCPexample}
\end{align}
has the probabilistic representation 
\begin{align*}
\varphi(t,x) = \mathbf{E}_x[f(Y^L_t)] = \int_0^\infty \mathbf{E}_x[f(Y_s)]\, \mathbf{P}_0(L_t \in ds)
\end{align*}
where $Y^L = \{Y^L_t\}_{t\geq 0}$ defined as $Y^L_t = Y \circ L_t$ is obtained via time change from the base process $Y$. For $Y$ on a bounded domain $\Omega$ and $\forall\, \Lambda \subset \Omega$ we introduce $\tau_Y(\Lambda) = \inf\{t\,:\, Y_t \notin \Lambda \}$ and $\tau^L_Y(\Lambda) = \inf \{t\,:\, Y^L_t \notin \Lambda\}$ as the first exit time from the subset $\Lambda$ respectively of $Y$ and $Y^L$. For $x \in \Lambda$, $S \subset \Lambda$, we have (\cite{Chen17})
\begin{align*}
\mu_Y(x, S) := \mathbf{E}_x \left[ \int_0^{\tau_Y(\Lambda)} \mathbf{1}_S(Y_s) ds \right] < \infty
\end{align*}
and
\begin{align*}
\mu_Y^L(x, S) := \mathbf{E}_x \left[ \int_0^{\tau^L_Y(\Lambda)} \mathbf{1}_S(Y^L_s) ds \right] = \infty,
\end{align*}
for the occupation measures $\mu_Y$ and $\mu_Y^L$ of a subset $S$. This result well agrees with the fact that (\cite{CapDovDelRus17,Chen17})
\begin{align*}
\mathbf{E}_x[\zeta] < \infty, \quad \mathbf{E}_x[\zeta^L] = \infty 
\end{align*}
where $\zeta$ and $\zeta^L=H\circ \zeta$ respectively are the lifetimes of $Y$ and $Y^L$ on $\Omega$ if we allow a kill on $\partial \Omega$ (Dirichlet condition on the boundary). Thus, we have infinite average amount of time spent by the process $Y^L_t$ in $\Omega$ and every subsets $\Lambda$ of $\Omega$. We say that the process is delayed by $L$. 

\begin{remark}
This can be better explained by considering the general operator
\begin{align*}
D^\Phi_t \varphi = \varphi^\prime * \kappa, \quad \kappa(z) = \Pi(z, \infty)  
\end{align*}
where
\begin{align}
\label{genSymbH}
\Phi(\lambda) =\int_0^\infty (1-e^{-\lambda z}) \Pi(dz), \quad \textrm{s.t.} \quad \int_0^\infty (1\wedge z) \Pi(dz) <\infty
\end{align}
is the symbol of a subordinator with L\'evy measure $\Pi$ and $\kappa$ is the tail of $\Pi$ with 
\begin{align*}
\int_0^\infty e^{-\lambda z} \kappa(z)dz = \frac{\Phi(\lambda)}{\lambda}. 
\end{align*}
The fact that $E_x[\zeta^L] < \infty$ can be related with the elliptic problem associated to the FCP and the integrability of $D^\Phi_t$. In particular, from the problem \eqref{FCPexample}, via integration with respect to time, we get 
\begin{align*}
\int_0^\infty D^\alpha_t \varphi = A_Y \int_0^\infty \varphi dt, \quad \varphi_0 = 1
\end{align*}
and
\begin{align*}
\int_0^\infty \varphi dt = \int_0^\infty \mathbf{P}_x(\zeta > t) dt
\end{align*}
gives the mean lifetime of $Y^L$. Notice that $\zeta$ coincides with a killing time if we assume Dirichlet boundary condition. The elliptic problem associated with \eqref{FCPexample} makes sense according with 
\begin{align*}
\|D^\Phi_t \varphi \|^p_p \leq \| \varphi \|^p_p \left( \lim_{\lambda \to \infty} \frac{\Phi(\lambda)}{\lambda} \right)^p, \quad p \in [1, \infty)
\end{align*}
for $\varphi \in L^p(0, \infty)$ and $\lim_{\lambda \to \infty} \Phi(\lambda)/\lambda < \infty$. The derivative $D^\alpha_t$ is the special case with $\Phi(\lambda)=\lambda^\alpha$ for which the limit above is infinite. In \cite{CapDovDelRus17} we discuss this property as a delaying effect for the base process $Y$.
\end{remark}

The problem \eqref{P2-FP} can be associated with \eqref{P1-FIVP}. Indeed, it can be regarded as a simple extension to the well-known results about fractional PDE in case the following assumption turns out to be verified:
\begin{align}
\label{AssumptionA1}
\tag{A1}
\forall\, t>0 \quad \lim_{x\to 0} \big( D^\alpha_t w(t, x) - w^{\prime \prime}(t,x) \big)= 0. 
\end{align}
Thus, under \eqref{AssumptionA1} the identities 
\begin{align*}
\eta D^\alpha_t w(t, 0) \stackrel{(A1)}{=} \eta w^{\prime \prime}(t,0) \stackrel{\eqref{BCintro}}{=} \sigma w^\prime (t,0) - c\, w(t, 0), \quad t>0
\end{align*}
give the equivalence between \eqref{P1-FIVP} and \eqref{P2-FP}. We observe that \eqref{AssumptionA1} can be considered also in a weaker sense. However, the key role here is played by the continuity of the second derivative at zero. We observe that an equivalent formulation of these problems can be given as
\begin{align*}
D^\alpha_t \left( \begin{matrix} \varphi\\ \varphi|_0 \end{matrix} \right)
= \mathcal{A} \left( \begin{matrix} \varphi\\ \varphi|_0 \end{matrix} \right)
\end{align*}
with the suitable definition of the operator matrix $\mathcal{A}$ on $C(\overline{\Omega})$. This does not hold for the next case.\\

The problem \eqref{P3-FBVP} involves a non-local equation on the boundary which can be written as 
\begin{align*}
\eta D^\alpha_t u(t,0) 
= & \lim_{x\downarrow 0} \left( \eta \frac{1}{\Gamma(1-\alpha)} \int_0^\infty u^{\prime \prime}(s, x) (t-s)^{-\alpha} ds \right)\\
= & \eta \frac{1}{\Gamma(1-\alpha)} \int_0^\infty u^{\prime \prime}(s, 0) (t-s)^{-\alpha} ds, \quad t>0.
\end{align*}
by exploiting the definition of $D^\alpha_t$. Our main goal in the present work is to study this problem and obtain a probabilistic description of the associated models. In some sense we extend the results obtained in \cite{ItoMcK} and \cite{Wentzell59} on the second order boundary condition. In probability, we refer to the sticky condition. A bridge between second order boundary conditions and dynamic boundary conditions can be given for the heat equation under the fact that $\dot{u}(t, x) |_{x=0} = \dot{u}(t, 0)$ as in the Assumption \eqref{AssumptionA1}. Thus, a Feller-Wentzell boundary condition can be treated as a dynamic boundary condition for the heat equation.  

The probabilistic representation of the problem \eqref{P1-FIVP} is concerned with the process $\bar{X}$. We show (Theorem \ref{thm:FBVP-realLine}) that $\bar{X}$ admits the representation $\bar{X}_t = X^{el} \circ \bar{V}^{-1}_t$ where $\bar{V}^{-1}_t$ is the inverse to the process
\begin{align}
\bar{V}_t = t + H \circ (\eta/\sigma) \gamma^+_t, \quad t\geq 0.
\end{align}
Notice that, for $\alpha=1$ $H_t=t$ a.s. and therefore $\bar{V}_t = V_t$ a.s., thus $\bar{X}$ coincides with $X$ for $\alpha=1$. Let us define $\tau_{\bar{X}}(\Lambda) = \inf\{t\,:\, \bar{X}_t \notin \Lambda \}$ and $\tau_X(\Lambda) = \inf\{t\,:\, X_t \notin \Lambda\}$. For $\alpha \in (0,1]$, we have finite occupation measures, that is, for $S \subset \Lambda$, 
\begin{align*}
\forall x \in \Lambda \subset (0, \infty), \quad \mu_{\bar{X}}(x, S) := \mathbf{E}_x\left[ \int_0^{\tau_{\bar{X}}(\Lambda)} \mathbf{1}_S (\bar{X}_s)ds \right] < \infty.
\end{align*}
This is related with the fact that $\bar{X}$ (and $X$ for $\alpha=1$) behaves like a Brownian motion on $(0, \infty)$. It moves on finite intervals with finite times. For the time the process $\bar{X}$ spends on the boundary point we refer, as usual, to the holding times. Here we have different behaviours depending on $\alpha$. It is well-known that $X$ spends an exponential time on the boundary with each visit, we denote by $\{e_i\}_i$ the sequence of i.i.d. holding times for $X$. The average amount of time the process $X$ spends at $\{0\}$ is therefore given by
\begin{align*}
\mathbf{E}[e_i] = (\eta/\sigma), \quad \forall\, i.
\end{align*}
Concerning the process $\bar{X}$ we refer to the sequence $\{\bar{e}_i\}_{i}$ of i.i.d. holding times, that is $\bar{e}_i$ for the visit $i$ is a random time such that, for $\bar{X}_0=0$ and $\bar{X}_{\bar{e}_i}>0$, it holds (Theorem \ref{thm:holdingTime})
\begin{align*}
\mathbf{P}_0(\bar{e}_i > t | \bar{X}_{\bar{e}_i} > 0) = E_\alpha(-(\sigma/\eta) t^\alpha), \quad t\geq 0
\end{align*}
where $E_\alpha$ is the Mittag-Leffer function. It is well-known that $E_\alpha \notin L^1(0,\infty)$. The time the process $\bar{X}$ spends on the boundary point $\{0\}$ with each visit, is a Mittag-Leffler random variable with
\begin{align*}
\forall\, \alpha \in (0,1), \quad \mathbf{E}[\bar{e}_i ] = \infty, \quad \forall\, i.
\end{align*}
This special behaviour of $\bar{X}$ for $\alpha \in (0,1)$ entails a new role for the boundary point $\{0\}$. This point can be regarded as a trap for the Brownian motion on $(0, \infty)$.

\subsection{Plan of the work}

We begin with some preliminaries. In Section \ref{sec:FIVP} we compare the problems \eqref{P1-FIVP} and \eqref{P2-FP}. In Section \ref{sec:FBVP} we present our main results on the problem \eqref{P3-FBVP}.

\subsection{Motivations}
The present work brings new light to the boundary value problems with dynamic conditions. Although the main interest would be apparently given by motions on bounded domains $\Omega \subset \mathbb{R}^d$ with $d>1$ our problem could be very attractive for a number of applications. Indeed, the study of non-local dynamic problems on higher dimension introduces motions on the lower dimensional space $\partial \Omega \subset \mathbb{R}^{d-1}$. This is obviously of interest and reasonably leads to pure jump processes on $\partial \Omega$, the associated trace process. This is motivated by our construction given in terms of $X^+$, instantaneous reflections may only give holding times (we do not have boundary motions). On the other hand, the fact that we deal with a zero Lebesgue measure boundary, the point $\{0\}$, it does not seem to be restrictive. Indeed, the independent holding times at $\{0\}$ (or a general point $x>0$) for a Brownian motion find interesting applications in different fields of applied sciences. We mention the financial models in which investors update their beliefs too slowly or traffic models in which we construct motions on metric graphs with (independent) delay on a vertex. In general, the sticky point for the process $X$ becomes a trap point for the process $\bar{X}$. In our analysis, the boundary point $\{0\}$ can be regarded as a trap point for the Brownian motion $X^+$ on $[0, \infty)$.

\section{Preliminaries}

\subsection{The Caputo-Dzherbashian fractional derivative}
\label{sec:CD-der}

For a function $\varphi(\cdot,x) : (0, \infty) \mapsto (0, \infty)$, $\forall\, x \in [0, \infty)$ we consider the convolution-type operator defined as
\begin{align*}
D^\alpha_t \varphi(t,x) = \frac{1}{\Gamma(1-\alpha)} \int_0^t \frac{\partial \varphi}{\partial s}(s,x)\, (t-s)^{-\alpha} ds
\end{align*}
for $\alpha \in (0,1)$. \\

We notice that the Caputo-Dzherbashian derivative is well-defined for functions which are exponentially bounded together with their first derivative. In particular, $D^\alpha_t \varphi$ is well defined for $\varphi(\cdot, x) \in W^{1,\infty} (0,\infty)$, $\forall\, x \in [0, \infty)$ and this ensures existence of the Laplace transform
\begin{align}
\int_0^\infty e^{-\lambda t} D^\alpha_t \varphi(t,x)\, dt 
= & \left( \int_0^\infty e^{-\lambda t} \frac{t^{-\alpha}}{\Gamma(1-\alpha)} \right) \left(\int_0^\infty e^{-\lambda t} \frac{\partial \varphi}{\partial t}(t,x) \, dt \right)\notag \\
= & \frac{\lambda^\alpha}{\lambda} \big(\lambda \tilde{\varphi}(\lambda, x) - u(0, x) \big), \quad \lambda>0
\label{CD-LT}
\end{align}
where
\begin{align*}
\tilde{\varphi}(\lambda, x) = \int_0^\infty e^{-\lambda t} \varphi(t,x) dt, \quad \lambda>0.
\end{align*}
Formula \eqref{CD-LT} can be obtained by observing that $D^\alpha_t$ is a convolution operator. Notice that we do not require $\varphi(\cdot, x) \in L^1(0, \infty)$ for some $x$.

\subsection{The process $X$}
\label{sec:secX}

Our discussion is mainly based on the well-known book \cite{BluGet68} and the pioneering work \cite{ItoMcK}. Let us consider the natural filtration $ \mathcal{F}_t = \sigma\{X_s, \, 0\leq s < t\}$ and a good function $f$ for which $\mathbf{E}[f(X_s)| \mathcal{F}_t] = \mathbf{E}[f(X_s)|X_t]$, $t\leq s$, and $\mathbf{E}_x [f(X_{t+s}) | \mathcal{F}_t] = \mathbf{E}_{X_t} [f(X_s)]$, $s,t>0$. We say that $X$ is an elastic process meaning that $\mathbf{E}_x[f(X_t)] = \mathbf{E}_x[f(X_t), t < \zeta]$ is written in terms of the multiplicative functional $M_t= \mathbf{1}_{(t< \zeta)}$ where the lifetime $\zeta$ well accords with an elastic kill. There exists an independent exponential random variable (with parameter $c/\eta$) for which
\begin{align}
\label{lawExpLT}
\mathbf{E}[M_t | X_t] = e^{-(c/\eta) \gamma_t}.
\end{align}
On the other hand, from the sticky condition, $\{t\,:\, X_t \in \partial \Omega\}$ is a Lebesgue measurable set obtained from the holding times of $X$ on $\partial \Omega$. In particular, we may consider a sequence $\{e_i\}_i$ of (identically distributed) independent exponential random variables (with parameter $\sigma/\eta$) for which $\mathbf{P}_x(e_1 > t ,  X_{e_1} \in dy) = e^{-(\sigma/\eta)t} \mathbf{P}_x(X_{e_1} \in dy)$ for $x \in \partial \Omega$ and
\begin{align}
\label{lawHoldingTimeX}
\mathbf{P}_x(e_1 > t | X_{e_1}) = e^{-(\sigma/\eta) t}, \quad x \in \partial \Omega.
\end{align}
As announced we consider $\overline{\Omega} = [0, \infty)$ with the boundary point $\{0\}$. Since $X$ is an elastic sticky Brownian motion we have the representation (\cite[Section 10]{ItoMcK} 
\begin{align}
\label{repXelSticky}
\mathbf{E}_x[f(X_t)] = \mathbf{E}_x\left[ f(X^+ \circ V^{-1}_t) \exp \left( -c/\sigma\, \gamma^+ \circ V^{-1}_t \right) \right]
\end{align}
where 
\begin{align*}
V^{-1}_t = \inf\{s\geq 0\,:\, V_s >t\}
\end{align*}
is the inverse of
\begin{align*}
V_t = t + (\eta/\sigma) \gamma^+_t.
\end{align*}
Notice that $V_t$ and $V^{-1}_t$ are both continuous and strictly increasing. Moreover, 
\begin{align*}
V \circ V^{-1}_t = t, \quad t\geq 0.
\end{align*} Under the representation \eqref{repXelSticky} for $X$, the boundary local time $\gamma$ of $X$ can be given as the composition (see \cite{ItoMcK}, formula 18, Section 10)
\begin{align}
\label{equivgamma}
\gamma_t = (\eta/\sigma)\gamma^+ \circ V^{-1}_t, \quad t>0.
\end{align}
We recall the resolvent
\begin{align}
\label{potXdef}
R_\lambda f(x) = e^{-x \sqrt{\lambda}} \frac{\sigma \int_0^\infty e^{-y \sqrt{\lambda}} f(y) dy + \eta f(0)}{c + \eta \lambda + \sigma \sqrt{\lambda}} + R^\dagger_\lambda f(x), \quad x \in [0, \infty),\; \lambda>0
\end{align}
where $R^\dagger_\lambda f = \int_0^\infty e^{-\lambda t} Q^\dagger_t f dt$ is the resolvent for the Dirichlet semigroup $Q^\dagger_t$. For the explicit calculation we refer to \cite{ItoMcK} and \cite{Dov22}. Moreover, we write
\begin{align*}
\mathbf{P}_x(X_t \in dy) = p(t,x,y) dy
\end{align*}
where $p$ is the continuous kernel of $X$.

\subsection{The random times $L$ and $H$}
The process $L$ is the inverse to the $\alpha$-stable subordinator $H$ defined by $L_t = \inf\{s\geq 0\,:\, H_s>t \}$  and for which $\mathbf{P}_0(L_t < s) = \mathbf{P}_0(t < H_s)$, $t,s>0$. We assume that $H_0=0=L_0$ and write $\mathbf{P}_0$ for the associated probability measure. Denote by $l$ and $h$ the corresponding probability densities for which
\begin{align*}
\mathbf{P}_0(H_t \in ds) = h(t,s)ds, \quad \mathbf{P}_0(L_t \in ds) = l(t,s)ds.
\end{align*}
Then
\begin{align}
\label{LapHandL}
\int_0^\infty e^{-\xi s} h(t,s) ds = e^{-t \xi^\alpha}, \quad \int_0^\infty e^{-\lambda t} l(t,s)dt = \frac{\lambda^\alpha}{\lambda} e^{-s \lambda^\alpha}, \quad \xi, \lambda>0.
\end{align}
We recall that
\begin{align}
\label{LapML}
\int_0^\infty e^{-\xi s} l(t,s) ds = E_\alpha(- \xi t^\alpha) \quad \textrm{with} \quad \int_0^\infty e^{-\lambda t} E_\alpha (-\xi t^\alpha) dt = \frac{\lambda^{\alpha-1}}{\lambda^\alpha + \xi}, \quad \lambda>0
\end{align}
where the Mittag-Leffler function $E_\alpha$ is analytic and such that
\begin{align}
\label{MLbound}
| E_\alpha(-\mu t^\alpha)| \leq \frac{C}{1+ \mu t^\alpha}, \quad t\geq 0, \; \mu>0 \quad \textrm{for a constant } C>0 
\end{align}
(see \cite{Bingham71,Kra03}). We underline that $E_\alpha \notin L^1(0, \infty)$ for $\alpha \in (0,1)$.

We recall that $\lambda^\alpha$ is the so-called symbol of $H$ such that $\mathbf{E}_0[\exp -\lambda H_t] = \exp -t \lambda^\alpha$, $\lambda>0$. For the reader's convenience we also recall that $\lambda^\alpha$ is a Bernstein function with representation (according with \eqref{genSymbH})
\begin{align*}
\lambda^\alpha = \int_0^\infty \left( 1 - e^{-\lambda z} \right) \frac{\alpha}{\Gamma(1-\alpha)} z^{-\alpha-1} dz, \quad \lambda \geq 0.
\end{align*}
Moreover,
\begin{align*}
\overline{\Pi}(z) := \int_z^\infty \frac{\alpha}{\Gamma(1-\alpha)} y^{-\alpha-1} dy = l(z,0), \quad z >0
\end{align*}
as the Laplace transform of both sides entails.

\section{The fractional initial value problem}
\label{sec:FIVP}

\subsection{Time changes for FIVPs}

\begin{theorem}
Let us consider the solution $v$ to the problem \eqref{P1-FIVP} and the solution $w$ to the problem \eqref{P2-FP}:
\begin{itemize}
\item[i)] $v,w \in D(G)$;
\item[ii)] $\forall\, t\geq 0,\; \forall\, x \in [0, \infty),\; v(t,x) = w(t,x)$.
\end{itemize}
Moreover,
\begin{align*}
\int_0^\infty e^{-\lambda t} v(t,x) dt = \frac{\lambda^\alpha}{\lambda} R_{\lambda^\alpha} f(x), \quad \lambda >0,\; x \in [0, \infty)
\end{align*}
where $R_\lambda$ has been defined in \eqref{potXdef} and the following probabilistic representation holds true
\begin{align}
v(t,x) = \mathbf{E}_x[f(X \circ L_t)], \quad t \geq 0,\; x \in [0, \infty).
\end{align}
\end{theorem}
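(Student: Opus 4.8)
The plan is to pass to Laplace space in the time variable, where the fractional derivative linearizes and each problem collapses to an elliptic boundary value problem that the resolvent $R_{\lambda^\alpha}$ already solves. First I would apply the Laplace transform in $t$ to the interior equation $D^\alpha_t v = v''$ of \eqref{P1-FIVP}. Using the transform formula \eqref{CD-LT} together with the initial datum $v(0,x)=f(x)$, the equation becomes the ordinary differential equation
\begin{align*}
\tilde{v}''(\lambda, x) - \lambda^\alpha\, \tilde{v}(\lambda, x) = -\lambda^{\alpha-1} f(x), \quad x>0.
\end{align*}
Transforming the second-order boundary condition $\eta v''(t,0) = \sigma v'(t,0) - c\, v(t,0)$ gives $\eta \tilde{v}''(\lambda, 0) = \sigma \tilde{v}'(\lambda, 0) - c\, \tilde{v}(\lambda, 0)$, which is precisely the condition \eqref{BCintro} characterizing $D(G)$.

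Next I would verify that the candidate $g(x) := (\lambda^\alpha/\lambda)\, R_{\lambda^\alpha} f(x)$ solves this boundary value problem. Since $R_\mu$ is the resolvent of $(G, D(G))$ we have $\mu R_\mu f - (R_\mu f)'' = f$; taking $\mu = \lambda^\alpha$ and multiplying by $\lambda^{\alpha-1}$ shows that $g$ satisfies the displayed ODE, while $R_{\lambda^\alpha} f \in D(G)$ guarantees that $g$ satisfies \eqref{BCintro}. Uniqueness for this elliptic problem among exponentially decaying solutions then forces $\tilde{v}(\lambda, x) = g(x)$, which is the asserted Laplace transform identity, and inversion places $v$ in $D(G)$. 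The coincidence $v=w$ of claim ii) then comes for free: transforming the boundary condition of \eqref{P2-FP} and using the transformed interior equation evaluated at $x=0$ to replace $\lambda^\alpha \tilde{w}(\lambda,0) - \lambda^{\alpha-1} f(0)$ by $\tilde{w}''(\lambda, 0)$ produces once more $\eta \tilde{w}''(\lambda, 0) = \sigma \tilde{w}'(\lambda, 0) - c\, \tilde{w}(\lambda, 0)$. Hence $\tilde{v}$ and $\tilde{w}$ solve the identical transformed problem and agree; this is the Laplace-space shadow of the Assumption \eqref{AssumptionA1}.

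For the probabilistic representation I would compute the Laplace transform of the right-hand side directly. Conditioning on $L_t$ and using $\mathbf{P}_0(L_t \in ds) = l(t,s)\,ds$,
\begin{align*}
\int_0^\infty e^{-\lambda t}\, \mathbf{E}_x[f(X \circ L_t)]\, dt = \int_0^\infty \mathbf{E}_x[f(X_s)] \left( \int_0^\infty e^{-\lambda t} l(t,s)\, dt \right) ds.
\end{align*}
By \eqref{LapHandL} the inner integral equals $(\lambda^\alpha/\lambda)\, e^{-s \lambda^\alpha}$, so the whole expression reduces to $(\lambda^\alpha/\lambda) \int_0^\infty e^{-\lambda^\alpha s}\, \mathbf{E}_x[f(X_s)]\, ds = (\lambda^\alpha/\lambda)\, R_{\lambda^\alpha} f(x)$, matching the transform of $v$. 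Uniqueness of the Laplace transform then yields $v(t,x) = \mathbf{E}_x[f(X \circ L_t)]$.

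The hard part will not be the formal Laplace-space computation but the regularity needed to justify it rigorously: one must show that the function obtained by inverting $(\lambda^\alpha/\lambda)\, R_{\lambda^\alpha} f$ is genuinely $C^2$ up to the boundary, so that the pointwise condition \eqref{BCintro} (and hence $v \in D(G)$) holds in the classical sense, and that interchanging the $t$-transform with the $x$-derivatives and with the subordination integral is licit for $f$ in the relevant class. Here I would lean on the explicit form of $R_\lambda$ in \eqref{potXdef} and the Mittag-Leffler decay estimate \eqref{MLbound} to control both the boundary regularity and the integrability of the kernel $l(t,s)$.
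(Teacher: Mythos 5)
Your proposal is correct, but it is organized in the opposite direction from the paper's proof, and the difference is worth noting. The paper \emph{starts} from the probabilistic candidate $\varphi(t,x)=\mathbf{E}_x[f(X\circ L_t)]$, computes its potential via subordination to get $\frac{\lambda^\alpha}{\lambda}R_{\lambda^\alpha}f(x)$, and then verifies by explicit algebra on the closed-form kernel \eqref{potXdef} that this function satisfies the transformed boundary conditions of \emph{both} \eqref{P1-FIVP} and \eqref{P2-FP} --- including a separate computation showing that $\lim_{x\to 0}\int_0^\infty e^{-\lambda t}\varphi^{\prime\prime}(t,x)\,dt$ equals the Laplace transform of $D^\alpha_t\varphi(t,0)$ through the convolution identity with the tail $\overline{\Pi}$ --- before concluding $\varphi=v=w$ by Lerch's theorem. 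You instead transform the two PDE problems first, reduce them via \eqref{CD-LT} to a single elliptic boundary value problem, identify its solution abstractly through the resolvent identity $\mu R_\mu f-(R_\mu f)^{\prime\prime}=f$ with $\mu=\lambda^\alpha$, and only afterwards match the probabilistic representation by the same subordination computation using \eqref{LapHandL}. Your route buys a genuinely cleaner uniqueness step: the paper appeals to ``uniqueness of the continuous solutions for the two problems'' without detail, whereas in your scheme uniqueness is an elementary ODE fact (a bounded solution of $\psi^{\prime\prime}=\lambda^\alpha\psi$ is $Ae^{-x\sqrt{\lambda^\alpha}}$, and the Wentzell--Robin condition forces $A(c+\eta\lambda^\alpha+\sigma\sqrt{\lambda^\alpha})=0$, hence $A=0$). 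Two small repairs: the right solution class is \emph{bounded} functions, not ``exponentially decaying'' ones ($R_{\lambda^\alpha}f$ does not decay for bounded $f$; only the difference of two bounded solutions decays), and your replacement of $\lambda^\alpha\tilde w(\lambda,0)-\lambda^{\alpha-1}f(0)$ by $\tilde w^{\prime\prime}(\lambda,0)$ uses continuity of the second derivative up to $x=0$, which is exactly the content of \eqref{AssumptionA1} --- you flag this correctly, and the paper's proof rests on the same regularity. Both arguments leave the claim $v,w\in D(G)$ at the level of the Laplace-space verification plus the regularity caveats you list, so on that point you match rather than fall short of the paper.
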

\begin{proof}
Let us consider the problem \eqref{P1-FIVP}. The problem 
\begin{align}
\label{FIVP-realLine}
\left\lbrace
\begin{array}{ll}
\displaystyle D^\alpha_t \varphi(t,x) = \varphi^{\prime \prime}(t,x), \quad t>0, \, x >0\\
\\
\displaystyle \eta \varphi^{\prime \prime}(t,0) = \sigma \varphi^\prime(t,0) - c\, \varphi(t, 0), \quad t>0\\
\\
\displaystyle \varphi(0,x) = f(x), \quad x \geq 0, \quad f \in C_b[0, \infty)
\end{array}
\right.
\end{align}
has a unique solution with probabilistic representation
\begin{align}
\label{repP1P2proof}
\varphi(t,x) = \mathbf{E}_x[f(X \circ L_t)] = \mathbf{E}_x[f(X^+ \circ V^{-1} \circ L_t) \, M \circ V^{-1} \circ L_t ].
\end{align}
To prove our statement we first assume that the representation \eqref{repP1P2proof} holds true. Then we consider the $\lambda$-potential
\begin{align*}
\int_0^\infty e^{-\lambda t}  \varphi(t, x) dt 
= & \frac{\lambda^\alpha}{\lambda} \mathbf{E}_x \left[ \int_0^\infty e^{-\lambda^\alpha t} f(X^+ \circ V^{-1}_t) M \circ V^{-1}_t \, dt \right] = \frac{\lambda^\alpha}{\lambda} R_{\lambda^\alpha} f(x)
\end{align*}
where $R_\lambda f$ has been introduced in \eqref{potXdef}. Thus
\begin{align*}
\lim_{x\to 0} \int_0^\infty e^{-\lambda t}  \varphi(t, x) dt = \frac{\lambda^\alpha}{\lambda} \frac{\sigma \int_0^\infty e^{-y \sqrt{\lambda^\alpha}} f(y) dy + \eta f(0)}{c + \eta \lambda^\alpha + \sigma \sqrt{\lambda^\alpha}} =: \widetilde{\varphi}(\lambda, 0)
\end{align*}
We get
\begin{align*}
\eta \lambda^\alpha \widetilde{\varphi}(\lambda, 0) - \frac{\lambda^\alpha}{\lambda} \eta f(0) = \sigma \frac{\lambda^\alpha}{\lambda} \int_0^\infty e^{-y \sqrt{\lambda^\alpha}} f(y) dy - \sigma \sqrt{\lambda^\alpha} \widetilde{\varphi}(\lambda, 0) - c \widetilde{\varphi}(\lambda, 0) 
\end{align*}
where
\begin{align*}
\eta \lambda^\alpha \widetilde{\varphi}(\lambda, 0) - \frac{\lambda^\alpha}{\lambda} \eta f(0) = \int_0^\infty e^{-\lambda t} \eta D^\alpha_t \varphi(t,0) dt
\end{align*}
and
\begin{align*}
\sigma \frac{\lambda^\alpha}{\lambda} \int_0^\infty e^{-y \sqrt{\lambda^\alpha}} f(y) dy - \sigma \sqrt{\lambda^\alpha} \widetilde{\varphi}(\lambda, 0) - c \widetilde{\varphi}(\lambda, 0)
\end{align*}
equals
\begin{align*}
\int_0^\infty e^{-\lambda t} \left(  \sigma \frac{\partial \varphi}{\partial x}(t,0) - c\, \varphi(t, 0) \right) dt.
\end{align*}
We conclude that $\varphi = w$, the solution of \eqref{P2-FP}.\\

On the other hand, 
\begin{align}
\lim_{x\to 0} \int_0^\infty e^{-\lambda t} \varphi^{\prime \prime}(t,x) dt 
= & \lim_{x\to 0} \left( \frac{\lambda^\alpha}{\lambda} R_{\lambda^\alpha} f(x) \right)^{\prime \prime}  \\
= & \frac{\lambda^\alpha}{\lambda} \left( \lambda^\alpha R_{\lambda^\alpha} f(0) - f(0) \right) \notag\\
= & \int_0^\infty e^{-\lambda t} \dot{\varphi}(t,0)\, dt. \notag
\end{align}
Notice that
\begin{align*}
\frac{\lambda^\alpha}{\lambda} \left( \lambda \frac{\lambda^\alpha}{\lambda} R_{\lambda^\alpha } f(0) - f(0) \right) = \left( \int_0^\infty e^{-\lambda t} \overline{\Pi}(t)\,dt \right) \left( \int_0^\infty e^{-\lambda t} \varphi^\prime(t, 0) dt \right)
\end{align*}
and therefore
\begin{align*}
\lim_{x\to 0} \int_0^\infty e^{-\lambda t} \varphi^{\prime \prime}(t,x) dt  = ( \varphi^\prime * \overline{\Pi})(t, 0)
\end{align*}
which is the fractional derivative $D^\alpha_t \varphi$ at $x=0$. We conclude that $\varphi = v$ is the solution of \eqref{P1-FIVP}. 

In both cases we have a continuous inverse of the Laplace transform, then there exist at most one inverse. The uniqueness of the continuous solutions for the two problems says that $w=v$ pointwise on $[0, \infty) \times [0, \infty)$. 

\end{proof}

\section{The fractional boundary value problem}
\label{sec:FBVP}

We recall that by FBVP we mean a local equation (the heat equation) on $(0, \infty)$ equipped with non-local condition at the boundary point $\{0\}$. In the literature many authors use to refer to a FBVP in case of non-local equations with local boundary conditions, that is, in our paper, a FIVP. 

\subsection{The process $\bar{X}$ and the FBVP}

We state here the main result of the work.\\

Further on we denote by $\bar{p}=\bar{p}(t,x,y)$ and $p^+=p^+(t,x,y)$ the continuous kernels associated with $\bar{X}$ and $X^+$. It will be clear that $\bar{X}$ becomes a Markov process only in case $\alpha=1$, that is the case $\bar{X}=X$. For $\alpha \in (0,1)$, $\bar{p}$ can not be associated with a semigroup operator. We also underline that $X$ does not enjoy the strong Markov property because of the holding times at $\{0\}$. However, the holding times for $X$ are exponential random variables and this maintains the Markov property.\\

We recall that $X^+$ is a reflected Brownian motion on $[0, \infty)$ with local time
\begin{align*}
\gamma^+_t = \int_0^t \mathbf{1}_{\{0\}}(X^+_s) ds
\end{align*}
properly defined as a limit on the interval $[0-\epsilon, 0+\epsilon]$ as $\epsilon \to 0$. We also recall that a positive continuous additive functional $A=\{A_t\}_{t\geq 0}$ for $X^+$ is in Revuz correspondence with a measure $\mu_A$, that is 
\begin{align*}
\lim_{t \to 0} \mathbf{E}_m \left[ \frac{1}{t} \int_0^t f(X^+_s) dA_s \right] = \int_\Omega f(x)\mu_A(dx)
\end{align*}
where $f$ is Borel measurable on $\Omega$ and $\mathbf{E}_m = \int \mathbf{E}_x m(dx)$.
Moreover, for $x \in \overline{\Omega}=[0, \infty)$,
\begin{align*}
\mathbf{E}_x \left[ \int_0^t f(X^+_s) dA_s \right] = \int_0^t \int_\Omega f(y) p^+(s,x,y)\mu_A(dy) ds.
\end{align*}
In particular, we observe that given $A$ for $X$ with $supp[\mu_A]=\Lambda \subset [0, \infty)$ we have
\begin{align*}
\lim_{t\to 0} \mathbf{E}_m \left[ \frac{1}{t} \int_0^t f(X_s) dA_s\right] = \int_\Lambda f(x)dx + (\eta/\sigma) \int_\Lambda f(x) \delta_0(dx).
\end{align*}

We provide the following result for the process $\bar{X}$ on $[0, \infty)$, that is an elastic Brownian motion on $[0, \infty)$ running with the new clock $\bar{V}_t^{-1}$. Let us introduce the space
\begin{align*}
D_L = & \bigg\{ u: (0, \infty) \times [0, \infty) \to \mathbb{R} \textrm{ such that } \dot{u}(s,0) (t-s)^{-\alpha} \in L^1(0,t), \, t>s>0 \bigg\}.
\end{align*}
 
\begin{theorem}
The solution $u \in C^{1,2}((0, \infty) \times [0, \infty)) \cap D_L$ to
\begin{align}
\label{FBVP-realLine}
\left\lbrace
\begin{array}{ll}
\displaystyle \dot{u}(t,x) = u^{\prime \prime}(t,x), \quad t>0, \, x >0\\
\\
\displaystyle \eta D^\alpha_t u(t,0) = \sigma u^\prime(t,0) - c\, u(t, 0), \quad t>0\\
\\
\displaystyle u(0,x) = f(x), \quad x \geq 0, \quad f \in C_b[0, \infty)
\end{array}
\right.
\end{align}
has the probabilistic representation
\begin{align*}
u(t,x) = \mathbf{E}_x [f(\bar{X}_t)] = \mathbf{E}_x\left[f(X^+ \circ \bar{V}^{-1}_t) \exp \left( -c/\sigma\, \gamma^+ \circ \bar{V}^{-1}_t \right) \right]
\end{align*}
where $\bar{V}^{-1}_t$ is the inverse of the process
\begin{align*}
\bar{V}_t = t + (\eta/ \sigma)^{1/\alpha} H \circ \gamma^+_t.
\end{align*}
Moreover, 
\begin{align*}
\int_0^\infty e^{-\lambda t} u(t,x) dt = R^\dagger_\lambda f(x) + \frac{e^{-x \sqrt{\lambda}}}{c + \eta \lambda^\alpha + \sigma \sqrt{\lambda}} \left( \sigma \int_0^\infty e^{-y \sqrt{\lambda}} f(y) dy + \eta \frac{\lambda^\alpha}{\lambda}  f(0) \right)
\end{align*}
with $\lambda>0$, where $R^\dagger_\lambda$ has been defined in \eqref{potXdef}.
\label{thm:FBVP-realLine}
\end{theorem}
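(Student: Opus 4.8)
The plan is to split the argument into an analytic part, which produces the explicit Laplace transform and yields uniqueness, and a probabilistic part, which identifies that transform with the stated representation of $\bar{X}$.

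First I would Laplace-transform in $t$ the bulk equation $\dot{u}=u^{\prime\prime}$. Writing $\tilde{u}(\lambda,x)=\int_0^\infty e^{-\lambda t}u(t,x)\,dt$, the equation becomes the ODE $\tilde{u}^{\prime\prime}(\lambda,x)-\lambda\,\tilde{u}(\lambda,x)=-f(x)$ on $(0,\infty)$, whose bounded solution is $\tilde{u}(\lambda,x)=A(\lambda)\,e^{-x\sqrt{\lambda}}+R^\dagger_\lambda f(x)$, with $R^\dagger_\lambda f$ the Dirichlet resolvent of \eqref{potXdef} furnishing the particular solution that vanishes at $x=0$. From the Dirichlet Green function one reads off $R^\dagger_\lambda f(0)=0$ and $(R^\dagger_\lambda f)^\prime(0)=\int_0^\infty e^{-y\sqrt{\lambda}}f(y)\,dy$, so $\tilde{u}(\lambda,0)=A(\lambda)$ and $\tilde{u}^\prime(\lambda,0)=-A(\lambda)\sqrt{\lambda}+\int_0^\infty e^{-y\sqrt{\lambda}}f(y)\,dy$. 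The fractional boundary condition is handled by the rule \eqref{CD-LT}, which gives $\int_0^\infty e^{-\lambda t}D^\alpha_t u(t,0)\,dt=\lambda^\alpha\tilde{u}(\lambda,0)-(\lambda^\alpha/\lambda)f(0)$; inserting this into $\eta D^\alpha_t u(t,0)=\sigma u^\prime(t,0)-c\,u(t,0)$ and solving the resulting linear relation for $A(\lambda)$ produces exactly the claimed transform, with denominator $c+\eta\lambda^\alpha+\sigma\sqrt{\lambda}$ and numerator $\sigma\int_0^\infty e^{-y\sqrt{\lambda}}f(y)\,dy+\eta(\lambda^\alpha/\lambda)f(0)$. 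Since a continuous function is determined by its Laplace transform, this already delivers uniqueness in the stated class.

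Next I would show the probabilistic representation has this same transform. Setting $\bar{M}_t=\exp(-(c/\sigma)\gamma^+\circ\bar{V}^{-1}_t)$ and changing variables $t=\bar{V}_s$ in $\int_0^\infty e^{-\lambda t}f(X^+_{\bar{V}^{-1}_t})\bar{M}_t\,dt$, one must account for the fact that $\bar{V}_s=s+(\eta/\sigma)^{1/\alpha}H_{\gamma^+_s}$ is strictly increasing but discontinuous, its jumps coming from the pure-jump subordinator $H$ while its continuous part is $ds$. This yields a continuous contribution $\int_0^\infty e^{-\lambda\bar{V}_s}f(X^+_s)e^{-(c/\sigma)\gamma^+_s}\,ds$ plus a sum over the jumps of $\bar{V}$, each gap $[\bar{V}_{s-},\bar{V}_s)$ contributing $\phi(s)\lambda^{-1}(e^{-\lambda\bar{V}_{s-}}-e^{-\lambda\bar{V}_s})$ with $\phi(s)=f(X^+_s)e^{-(c/\sigma)\gamma^+_s}$. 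For the continuous part I would condition on the path of $X^+$ and integrate out the independent $H$ via its Laplace exponent, using $\mathbf{E}[e^{-\lambda(\eta/\sigma)^{1/\alpha}H_{\gamma^+_s}}\mid\gamma^+_s]=e^{-(\eta/\sigma)\lambda^\alpha\gamma^+_s}$ (since $(\lambda(\eta/\sigma)^{1/\alpha})^\alpha=(\eta/\sigma)\lambda^\alpha$). This collapses the stable time change into an exponential local-time weight and leaves $\mathbf{E}_x[\int_0^\infty e^{-\lambda s}f(X^+_s)e^{-(\eta\lambda^\alpha+c)\gamma^+_s/\sigma}\,ds]$, i.e. the resolvent of reflecting Brownian motion with elastic local-time killing at rate $\beta=(\eta\lambda^\alpha+c)/\sigma$. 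By the standard elastic resolvent identity (Itô--McKean) this equals $R^\dagger_\lambda f(x)+e^{-x\sqrt{\lambda}}\,\sigma\int_0^\infty e^{-y\sqrt{\lambda}}f(y)\,dy/(c+\eta\lambda^\alpha+\sigma\sqrt{\lambda})$, reproducing every term of the target except the $f(0)$ term.

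The remaining $f(0)$ term must emerge from the jump sum, and this is the delicate step I expect to be the \emph{main obstacle}. On the support of $d\gamma^+$ one has $X^+_s=0$, so a jump of $\bar{V}$ at local-time level $\ell$, occurring at time $s=\tau_\ell$ (the inverse local time), contributes $f(0)e^{-(c/\sigma)\ell}\lambda^{-1}(e^{-\lambda\bar{V}_{s-}}-e^{-\lambda\bar{V}_s})$. I would evaluate the expectation of this sum by the compensation (Lévy) formula for the jumps of $H$, exploiting the independence of $H$ and $X^+$: the predictable factor again integrates to $e^{-(\eta/\sigma)\lambda^\alpha\ell}$, the jump factor integrates against the Lévy measure to $\int_0^\infty(1-e^{-\lambda(\eta/\sigma)^{1/\alpha}z})\,\Pi(dz)=(\eta/\sigma)\lambda^\alpha$, and the $X^+$ factor gives $\mathbf{E}_x[e^{-\lambda\tau_\ell}]=e^{-x\sqrt{\lambda}}e^{-\ell\sqrt{\lambda}}$ (hitting time of $0$ composed with the stable-$1/2$ inverse local time in the present normalization). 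Performing the resulting $\int_0^\infty d\ell$ integral yields precisely $e^{-x\sqrt{\lambda}}\,\eta(\lambda^\alpha/\lambda)f(0)/(c+\eta\lambda^\alpha+\sigma\sqrt{\lambda})$, completing the match $J_1+J_2=\tilde{u}(\lambda,x)$. The difficulty lies entirely in this jump accounting: making the change of variables with jumps rigorous, pinning down the correct local-time level and left limits at each jump of $\bar{V}$ (the interplay of the two independent subordinators $H$ and $\tau$), and justifying the compensation formula in the presence of the local-time weight. Finally I would check that the inverse of the explicit transform lies in $C^{1,2}((0,\infty)\times[0,\infty))\cap D_L$, so that $D^\alpha_t u(t,0)$ is well defined and the boundary identity holds pointwise, and conclude by Laplace uniqueness that this $u$ is the solution and equals $\mathbf{E}_x[f(\bar{X}_t)]$; the continuous part and this closing step are comparatively routine.
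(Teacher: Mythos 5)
Your proposal is correct, and it reorganizes the argument in a way that genuinely differs from the paper's proof even though the probabilistic core is parallel. The paper proceeds in the opposite logical order: it first establishes the structural form $\tilde{u}(\lambda,x)=R^\dagger_\lambda f(x)+e^{-x\sqrt{\lambda}}\,\tilde{u}(\lambda,0)$, reduces to $x=0$ via the Markov property up to $\bar{\tau}_0$, computes $\tilde{u}(\lambda,0)$ by substituting $t\mapsto\bar{V}_t$ and splitting $d\bar{V}_t=dt+(\eta/\sigma)^{1/\alpha}d(H\circ\gamma^+_t)$ into the two terms $I_3$ (via the joint law $e^{-(y+w)\sqrt{\lambda}}\,dy\,dw$ of $(X^+_t,\gamma^+_t)$ under $\mathbf{P}_0$) and $I_4$ (via the inverse local time and the identity $\mathbf{E}[e^{-\theta H_t}\,dH_t]=\frac{\theta^\alpha}{\theta}e^{-\theta^\alpha t}\,dt$), and only \emph{afterwards} verifies in Laplace variables, through \eqref{dertildeu} and \eqref{uLapSolZero}, that this transform satisfies the fractional Robin condition. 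You instead build the boundary condition in from the start by solving the transformed ODE with \eqref{CD-LT}, so the coefficient $A(\lambda)$ and uniqueness come out analytically before any probability, and your probabilistic work is a pure matching of resolvents done directly from general $x$ with no $\bar{\tau}_0$ splitting. Your two pieces map exactly onto the paper's: the continuous ($ds$) part, after integrating out $H$, is the elastic resolvent with local-time killing rate $(\eta\lambda^\alpha+c)/\sigma$, i.e. the Dirichlet term plus $I_3$; and your compensation-formula evaluation of the jump sum, using $\int_0^\infty\bigl(1-e^{-\lambda(\eta/\sigma)^{1/\alpha}z}\bigr)\Pi(dz)=(\eta/\sigma)\lambda^\alpha$ and $\mathbf{E}_x[e^{-\lambda\tau_\ell}]=e^{-(x+\ell)\sqrt{\lambda}}$, is precisely the rigorous form of $I_4$. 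In fact your explicit gap accounting $\lambda^{-1}\bigl(e^{-\lambda\bar{V}_{s-}}-e^{-\lambda\bar{V}_s}\bigr)$ is more careful than the paper on a real point: a naive right-endpoint Stieltjes reading of the paper's $d(H\circ\gamma^+_t)$ integral would give $\int_0^\infty z\,e^{-\theta z}\Pi(dz)=\alpha\,\theta^{\alpha-1}$ instead of $\theta^{\alpha-1}$ (and a left-endpoint reading diverges), while the paper's differential identity $-\theta^{-1}d\,\mathbf{E}[e^{-\theta H_t}]$ silently encodes exactly your integrated-gap interpretation, which is the correct one; the step you flagged as the main obstacle is indeed where all the content sits, and your treatment of it (including the a.s.\ absence of common jump levels of $H$ and $\gamma^{+,-1}$, and $X^+_{\tau_\ell}=0$) is sound. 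What each route buys: yours makes the origin of the $\eta(\lambda^\alpha/\lambda)f(0)$ term transparent — it is literally the plateau time of $\bar{X}$ at zero — and delivers uniqueness immediately, while the paper's order produces the intermediate representation $u(t,x)=Q^\dagger_t f(x)+\int_0^t \frac{x}{s}\,g(s,x)\,u(t-s,0)\,ds$ and the $\bar{\tau}_0$ decomposition that are reused later in the occupation-time lemmas.
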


\begin{proof}
	Since $u$ solves the heat equation, then $u$ can be written as
	\begin{align}
	\label{abWITHsol}
	u(t,x) = a(t,x) + \int_0^t b(t-s, x) u(s,0)ds
	\end{align}
	with Laplace transform
	\begin{align*}
	\tilde{u}(\lambda, x) = \tilde{a}(\lambda, x) + \tilde{b}(\lambda, x)\, \tilde{\upsilon}(\lambda, 0), \quad \lambda>0
	\end{align*}
	for some sufficiently regular and (piecewise continuous) integrable functions $a,b$. After some calculation we arrive at $\tilde{a}(\lambda, 0)=0$ and $\tilde{b}(\lambda, 0)=1$. Moreover, we must have $u(0,x) = a(0, x)$ and $\tilde{u}^{\prime \prime} = \lambda \tilde{u} - f$. In particular, it turns out that $a,b$ are given by
\begin{align}
\label{genREPu}
u(t,x) = Q^\dagger_t f(x) + \int_0^t \frac{x}{s} g(s,x)\, u(t-s, 0)\, ds 
\end{align}
where
\begin{align}
\label{semigQD}
Q^\dagger_t f(x) = \int_0^\infty \big( g(t,x-y) - g(t, x+y) \big)\, f(y)\, dy
\end{align}
is the Dirichlet semigroup and $g(t,z)= e^{-z^2/4t} / \sqrt{4\pi t}$ is the Gaussian kernel. The associated $\lambda$-potential is given by
\begin{align*}
\tilde{u}(\lambda, x) 
= & \int_0^\infty e^{-\lambda t} u(t,x) dt\\
= & \int_0^\infty e^{-\lambda t} Q^\dagger_t f(x) dt + e^{-x \sqrt{\lambda}} \tilde{u}(\lambda, 0), \quad \lambda>0
\end{align*}
where the potential of $Q^\dagger_t$ can be written as
\begin{align}
\int_0^\infty e^{-\lambda t} Q^\dagger_t f(x)\, dt
= & \frac{1}{2} \int_0^\infty \left( \frac{e^{-|x-y|\sqrt{\lambda}}}{\sqrt{\lambda}} - \frac{e^{-(x+y)\sqrt{\lambda}}}{\sqrt{\lambda}} \right) f(y)\, dy \label{PotSemigDir} \\
= & \frac{1}{2} \int_0^\infty \left( \frac{e^{(x-y)\sqrt{\lambda}}}{\sqrt{\lambda}}  - \frac{e^{-(x+y)\sqrt{\lambda}}}{\sqrt{\lambda}} \right) f(y)\, dy \notag \\ 
& - \frac{1}{2} \int_0^x \left( \frac{e^{(x-y)\sqrt{\lambda}}}{\sqrt{\lambda}} - \frac{e^{-(x-y)\sqrt{\lambda}}}{\sqrt{\lambda}} \right) f(y)\, dy, \quad \lambda>0 \notag
\end{align}
by exploiting the fact that
\begin{align*}
& \int_0^\infty e^{-|x-y|} f(y)\, dy \notag \\
= & \int_0^x e^{-(x-y)} f(y)\, dy + \int_x^\infty e^{-(y-x)} f(y)\, dy\notag \\
= & \int_0^\infty e^{-(y-x)}f(y)\, dy + \int_0^x e^{-(x-y)} f(y)\, dy  - \int_0^x e^{-(y-x)} f(y)\, dy.
\end{align*}
We can immediately see that
\begin{align*}
\frac{\partial^2 \tilde{u}}{\partial x^2}(\lambda, x) = \lambda \tilde{u}(\lambda, x) - f(x)
\end{align*}
and
\begin{align}
\label{dertildeu}
\frac{\partial \tilde{u}}{\partial x}(\lambda, x) \bigg|_{x=0} = \int_0^\infty e^{-y \sqrt{\lambda}} f(y) - \sqrt{\lambda} \tilde{u}(\lambda, x).
\end{align}

Let us recall that $\bar{\tau}_0 = \tau_0$ is the first time the process $\bar{X}$ hits the boundary point $x=0$. The representation \eqref{genREPu} can be obtained from the fact that $\bar{X}=X$ and therefore enjoys the Markov property up to the time $\bar{\tau}_0$. In particular, we have that
\begin{align*}
&\mathbf{E}_x\left[ \int_0^\infty e^{-\lambda t} f(\bar{X}_t) dt \right]\\
= & \mathbf{E}_x \left[ \int_0^{\bar{\tau}_0} e^{-\lambda t} f(\bar{X}_t) dt \right] + \mathbf{E}_x [e^{-\lambda \bar{\tau}_0}] \, \mathbf{E}_{0} \left[ \int_0^\infty e^{-\lambda t} f(\bar{X}_t) dt \right] , \quad \lambda>0.
\end{align*}
Let us focus on
\begin{align}
\mathbf{E}_{0} \left[ \int_0^\infty e^{-\lambda t} f(\bar{X}_t) dt \right] 
= & \mathbf{E}_{0} \left[ \int_0^\infty e^{-\lambda t} f(X^+ \circ \bar{V}^{-1}_t) \exp\left( -c/\sigma\, \gamma^+ \circ \bar{V}^{-1}_t \right) dt \right] \notag \\
= &  \mathbf{E}_{0} \left[ \int_0^\infty e^{-\lambda \bar{V}_t} f(X^+_t) \exp\left( -c/\sigma\, \gamma^+_t \right) d\bar{V}_t \right], \quad \lambda>0 \label{potXbarProof}
\end{align}
where
\begin{align*}
\int_0^\infty e^{-\lambda t} \mathbf{P}_0(X^+_t \in dy, 0< \gamma^+_t \in dw) = e^{-(y+w) \sqrt{\lambda}} dy\, dw, \quad \lambda>0 .
\end{align*}
Based on the definition of $\bar{V}_t$, we write \eqref{potXbarProof} as the sum of
\begin{align*}
\mathbf{E}_{0} \left[ \int_0^\infty e^{-\lambda t} f(X^+_t) \exp\left(-\lambda (\eta/\sigma)^{1/\alpha} H\circ \gamma^+_t - c / \sigma \, \gamma^+_t \right) dt \right]
\end{align*}
and
\begin{align*}
(\eta/\sigma)^{1/\alpha} \mathbf{E}_{0} \left[ \int_0^\infty e^{-\lambda t} f(X^+_t) \exp\left(-\lambda (\eta/\sigma)^{1/\alpha} H\circ \gamma^+_t - c/\sigma \, \gamma^+_t \right) d(H \circ \gamma^+_t) \right].
\end{align*}
From the first integral we get
\begin{align*}
& \mathbf{E}_{0} \left[ \int_0^\infty e^{-\lambda t} f(X^+_t) \exp\left(- \lambda (\eta/\sigma)^{1/\alpha}  H \circ \gamma^+_t - c/\sigma\, \gamma^+_t \right) dt \right] \\
= & \mathbf{E}_{0} \left[ \int_0^\infty e^{-\lambda t} f(X^+_t) \exp\left(- \lambda^\alpha \eta/\sigma \, \gamma^+_t - c/\sigma\, \gamma^+_t \right) dt \right] \\
= & \int_0^\infty \int_0^\infty f(y)\, e^{- w \,c/\sigma - w \lambda^\alpha \eta /\sigma} e^{-(y+w) \sqrt{\lambda}} dw\, dy\\
= & \frac{1}{c / \sigma  + \lambda^\alpha \eta /\sigma + \sqrt{\lambda}} \int_0^\infty e^{-y \sqrt{\lambda}} f(y)dy\\
= & \frac{\sigma}{c  + \lambda^\alpha \eta + \sigma \sqrt{\lambda}} \int_0^\infty e^{-y \sqrt{\lambda}} f(y)dy, \quad \lambda>0.
\end{align*}
Set
\begin{align*}
I_3:= \frac{\sigma}{c  + \lambda^\alpha \eta + \sigma \sqrt{\lambda}} \int_0^\infty e^{-y \sqrt{\lambda}} f(y)dy, \quad \lambda>0.
\end{align*}
Now, observe that
\begin{align*}
\mathbf{E}\left[ e^{-\theta H_t} dH_t \right] = - \frac{1}{\theta} d \mathbf{E} \left[ e^{-\theta H_t}\right] = \frac{\theta^\alpha}{\theta} e^{- \theta^\alpha t} dt, \quad \theta>0
\end{align*}
and write (recall that $\gamma^+ \circ \gamma^{+,-1}_t = t$ a.s.)
\begin{align*}
& \mathbf{E}_{0} \left[ \int_0^\infty e^{-\lambda t} f(X^+_t) \exp\left(-\lambda (\eta/\sigma)^{1/\alpha} H\circ \gamma^+_t - c/\sigma \, \gamma^+_t \right) d(H \circ \gamma^+_t) \right]\\
= & \mathbf{E}_{0} \left[ \int_0^\infty e^{-\lambda \gamma^{-1}_t} f(X^+ \circ \gamma^{+,-1}_t) \exp\left(-\lambda (\eta/\sigma)^{1/\alpha} H_t - c/ \sigma \, t \right) dH_t \right]\\
= & \frac{\sigma^{1/\alpha}}{\eta^{1/\alpha}} \frac{\lambda^\alpha \eta}{\lambda \sigma} \mathbf{E}_0 \left[ \int_0^\infty e^{-t \lambda^\alpha \eta /\sigma }  e^{-\lambda \gamma^{+,-1}_t - c/\sigma \, t} f(X^+ \circ \gamma^{+,-1}_t) dt \right]\\
= & \frac{\sigma^{1/\alpha}}{\eta^{1/\alpha}} \frac{\lambda^\alpha \eta}{\lambda \sigma} \mathbf{E}_0 \left[ \int_0^\infty  e^{-\lambda t} f(X^+_t) \exp \left( -\lambda^\alpha \eta /\sigma \, \gamma^+_t - c/\sigma \, \gamma^+_t \right) d\gamma^+_t \right], \quad \lambda>0.
\end{align*}
Set 
\begin{align*}
I_4:= \frac{\sigma^{1/\alpha}}{\eta^{1/\alpha}} \frac{\lambda^\alpha \eta}{\lambda \sigma} \mathbf{E}_0 \left[ \int_0^\infty  e^{-\lambda t} f(X^+_t) \exp \left( -\lambda^\alpha \eta /\sigma \, \gamma^+_t - c/\sigma \, \gamma^+_t \right) d\gamma^+_t \right], \quad \lambda>0
\end{align*}
where
\begin{align*}
& \mathbf{E}_0 \left[ \int_0^\infty  e^{-\lambda t} f(X^+_t) \exp \left( -\lambda^\alpha \eta / \sigma \, \gamma^+_t - c/\sigma\, \gamma^+_t \right) d\gamma^+_t \right] \\
= & \frac{f(0)}{c / \sigma + \lambda^\alpha \eta /\sigma} \mathbf{E}_0 \left[1 - \lambda \int_0^\infty e^{-\lambda t} \exp \left( - \lambda^\alpha \eta / \sigma \, \gamma^+_t  - c/\sigma\, \gamma^+_t \right) dt  \right]\\
= & \frac{f(0)}{c/\sigma + \lambda^\alpha \eta /\sigma } \left( 1 - \sqrt{\lambda} \int_0^\infty e^{-w \, c/\sigma - w\, \lambda^\alpha \eta /\sigma - w\sqrt{\lambda} } dw \right)\\
= & \frac{f(0)}{c/\sigma  + \lambda^\alpha \eta / \sigma + \sqrt{\lambda}}\\
= & \frac{\sigma\, f(0)}{c  + \lambda^\alpha \eta + \sigma \sqrt{\lambda}}, \quad \lambda>0.
\end{align*}
That is,
\begin{align*}
I_4 = \frac{\sigma^{1/\alpha}}{\eta^{1/\alpha}} \frac{\lambda^\alpha}{\lambda} \frac{\eta f(0)}{c + \eta \lambda^\alpha  + \sigma \sqrt{\lambda}}, \quad \lambda>0.
\end{align*}
Then, we obtain
\begin{align}
\label{uLapSolZero}
\tilde{u}(\lambda, 0) = & I_3 + (\eta/\sigma)^{1/\alpha} I_4 \notag \\ 
= & \frac{\sigma}{c + \eta \lambda^\alpha + \sigma \sqrt{\lambda}} \int_0^\infty e^{-y \sqrt{\lambda}} f(y) dy + \frac{\lambda^\alpha}{\lambda} \frac{\eta f(0)}{c + \eta \lambda^\alpha + \sigma \sqrt{\lambda}}
\end{align}
from which we get that
\begin{align*}
\eta \lambda^\alpha \tilde{u} - \eta \frac{\lambda^\alpha}{\lambda} f(0) = \sigma \int_0^\infty e^{-y \sqrt{\lambda}} f(y) dy -  \sigma \sqrt{\lambda} \tilde{u} - c\, \tilde{u}.
\end{align*}
By observing that
\begin{align*}
\eta \lambda^\alpha \tilde{u} - \eta \frac{\lambda^\alpha}{\lambda} f(0) = \eta \int_0^\infty e^{-\lambda t} D^\alpha_t u(t,0)\, dt,
\end{align*}
from \eqref{dertildeu}, we obtain
\begin{align*}
\eta D^\alpha_t u(t,0) = \sigma \frac{\partial u}{\partial x}(t, 0) - c\, u(t, 0)
\end{align*}
which is the claimed boundary condition. \\

The requirement $u \in D_L$ ensures existence of $D^\alpha_t u(t, 0)$. Uniqueness follows from the continuity of $u$ and the Laplace transform machinery (Lerch's Theorem). \\

\end{proof}
For the reader's convenience we recall that
\begin{align*}
\tilde{u}(\lambda, 0) = \int_0^\infty e^{-\lambda t} u(t,0) dt
\end{align*}
and $u$ is the solution to the problem \eqref{FBVP-realLine}. Now we study
\begin{align*}
\lambda \tilde{u}(\lambda, 0) - f(0) = \int_0^\infty \lambda  e^{-\lambda t} \mathbf{E}_0[f(\bar{X}_t) - f(\bar{X}_0)] dt = \mathbf{E}_0 [f(\bar{X}_\chi) - f(\bar{X}_0)]
\end{align*}
where $\chi$ is an exponential r.v. with $\mathbf{P}(\chi > t) = e^{-\lambda t}$, $\lambda>0$. Denote by $H^{1/2}$ the stable subordinator of order $1/2$ independent from $H$. Recall that $H$ is a stable subordinator of order $\alpha \in (0,1)$.

\begin{lemma}
If $\| f\|_\infty= f(0)$, then
\begin{align}
\lambda \tilde{u}(\lambda,0) - f(0) \leq -  f(0)\, \mathbf{E} \left[ \exp \left( - \lambda H^{1/2}_{\sigma \chi} - \lambda H_{\eta \chi} \right) \right], \quad \lambda \geq 0
\label{estMiddleSharp}
\end{align}
with $\mathbf{P}(\chi >t) = e^{-ct}$ and
\begin{align}
\lambda \tilde{u}(\lambda, 0) - f(0) \geq - f(0) \, \sqrt{(c/\eta)\frac{1}{\lambda^\alpha}+ (\sigma/\eta)\frac{\sqrt{\lambda}}{\lambda^\alpha}}, \quad \lambda \geq 0.
\label{estBelow}
\end{align}
Otherwise, 
\begin{align}
\lambda \tilde{u}(\lambda, 0) - f(0) \leq \frac{1}{2}\big( \| f \|_\infty - f(0)\big) \sqrt{(c/\eta)\frac{1}{\lambda^\alpha}+ (\sigma/\eta)\frac{\sqrt{\lambda}}{\lambda^\alpha}} , \quad \lambda > 0
\label{estMiddle}
\end{align}
and
\begin{align}
\lim_{\lambda \to 0} \lambda \tilde{u}(\lambda, 0) - f(0) \leq \big( \| f \|_\infty - f(0)\big).
\label{estMiddleZero}
\end{align}
Moreover,
\begin{align}
| \lambda \tilde{u}(\lambda, 0) - f(0) | \leq  \| f \|_\infty \, \sqrt{(c/\eta)\frac{1}{\lambda^\alpha}+ (\sigma/\eta)\frac{\sqrt{\lambda}}{\lambda^\alpha}}, \quad \lambda > 0.
\label{estModulus}
\end{align}

\end{lemma}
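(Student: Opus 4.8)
The plan is to start from the closed-form Laplace transform \eqref{uLapSolZero} established in Theorem \ref{thm:FBVP-realLine} and reduce every assertion to a single algebraic identity together with a couple of elementary scalar estimates. Multiplying \eqref{uLapSolZero} by $\lambda$, subtracting $f(0)$, and using the normalization $\sqrt{\lambda}\int_0^\infty e^{-y\sqrt{\lambda}}\,dy = 1$ to absorb $f(0)$ into the integral, I expect to arrive at the unified representation
\begin{align*}
\lambda\tilde{u}(\lambda,0) - f(0) = \frac{\sigma\sqrt{\lambda}\,A(\lambda) - c\,f(0)}{c + \eta\lambda^\alpha + \sigma\sqrt{\lambda}}, \qquad A(\lambda) := \sqrt{\lambda}\int_0^\infty e^{-y\sqrt{\lambda}}\big(f(y)-f(0)\big)\,dy.
\end{align*}
Here $A(\lambda)$ is the average of $f(y)-f(0)$ against the probability density $\sqrt{\lambda}\,e^{-y\sqrt{\lambda}}$, so it is the only channel through which $f$ enters the bounds. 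The first key step is to record its sign and size: when $\|f\|_\infty=f(0)$ one has $f(y)-f(0)\le 0$ and $f(y)-f(0)\ge -f(0)$, hence $-f(0)\le A(\lambda)\le 0$; in general (for $f\ge 0$ as throughout) one has $A(\lambda)\le\|f\|_\infty-f(0)$ and $|A(\lambda)|\le\|f\|_\infty$.

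For the sharp upper bound \eqref{estMiddleSharp} I would use $A(\lambda)\le 0$ to discard the first term in the numerator, leaving $\lambda\tilde{u}(\lambda,0)-f(0)\le -f(0)\,\tfrac{c}{c+\eta\lambda^\alpha+\sigma\sqrt{\lambda}}$. The remaining task, which I expect to be the main obstacle, is to identify this last factor with the stated double-subordinator transform. Conditioning on $\chi\sim\mathrm{Exp}(c)$ and using the independence of $H^{1/2}$ and $H$ together with their Laplace exponents $\mathbf{E}[e^{-\lambda H^{1/2}_t}]=e^{-t\sqrt{\lambda}}$ and $\mathbf{E}[e^{-\lambda H_t}]=e^{-t\lambda^\alpha}$, I get
\begin{align*}
\mathbf{E}\big[\exp(-\lambda H^{1/2}_{\sigma\chi}-\lambda H_{\eta\chi})\big] = \mathbf{E}_\chi\big[e^{-\chi(\sigma\sqrt{\lambda}+\eta\lambda^\alpha)}\big] = \frac{c}{c+\eta\lambda^\alpha+\sigma\sqrt{\lambda}},
\end{align*}
which matches exactly and closes \eqref{estMiddleSharp}. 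The lower bound \eqref{estBelow} runs symmetrically: $A(\lambda)\ge -f(0)$ forces the numerator above $-f(0)(c+\sigma\sqrt{\lambda})$, so writing $B:=c+\sigma\sqrt{\lambda}$ and $D:=\eta\lambda^\alpha$ one is left to verify the elementary inequality $\tfrac{B}{B+D}\le\sqrt{B/D}$, which squares to $0\le B^2+BD+D^2$.

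The remaining estimates reuse the same ingredients. For \eqref{estMiddle} I would apply $A(\lambda)\le\|f\|_\infty-f(0)$ and drop the nonpositive term $-c\,f(0)$, then combine the arithmetic--geometric mean inequality $B+D\ge 2\sqrt{BD}$ with $\sigma\sqrt{\lambda}\le B$ to produce both the factor $\tfrac12$ and the square-root expression. The limit \eqref{estMiddleZero} follows by sending $\lambda\to 0$ in the unified representation: since $A$ stays bounded while $\sqrt{\lambda}\to 0$ and the denominator tends to $c>0$, the ratio tends to $-f(0)\le\|f\|_\infty-f(0)$. Finally, for the modulus bound \eqref{estModulus} I would use $|A(\lambda)|\le\|f\|_\infty$ and $|f(0)|\le\|f\|_\infty$ to bound the numerator by $\|f\|_\infty(c+\sigma\sqrt{\lambda})=\|f\|_\infty B$, and invoke $\tfrac{B}{B+D}\le\sqrt{B/D}$ once more. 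In short, the only genuinely new computation is the subordinator-transform identity used in \eqref{estMiddleSharp}; everything else is sign-tracking in the unified representation plus the two reusable scalar inequalities $\tfrac{B}{B+D}\le\sqrt{B/D}$ and $B+D\ge 2\sqrt{BD}$, so the write-up should be brief once that identity is in hand.
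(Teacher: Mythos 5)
Your proposal is correct and follows essentially the same route as the paper: both start from the explicit transform \eqref{uLapSolZero}, both prove \eqref{estMiddleSharp} by recognizing $c/(c+\eta\lambda^\alpha+\sigma\sqrt{\lambda})$ as $\mathbf{E}[\exp(-\lambda H^{1/2}_{\sigma\chi}-\lambda H_{\eta\chi})]$ via the subordinator Laplace exponents, and both obtain the square-root bounds from the arithmetic--geometric mean inequality applied to $a=c+\sigma\sqrt{\lambda}$, $b=\eta\lambda^\alpha$. Your only departure is cosmetic but pleasant: the exact unified representation $\lambda\tilde{u}(\lambda,0)-f(0)=\big(\sigma\sqrt{\lambda}A(\lambda)-cf(0)\big)/(c+\eta\lambda^\alpha+\sigma\sqrt{\lambda})$ tracks signs precisely (and even yields the exact limit $-f(0)$ in \eqref{estMiddleZero}), whereas the paper reaches the same estimates through successive crude bounds such as $\sqrt{\lambda}\int_0^\infty e^{-y\sqrt{\lambda}}f(y)\,dy\le\|f\|_\infty$ and $f(0)+\|f\|_\infty\le 2\|f\|_\infty$.
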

\begin{proof}
We use the fact that, for $\lambda>0$,
\begin{align*}
\int_0^\infty e^{-\lambda t} \frac{\partial u}{\partial t} (t, 0) dt \leq  \int_0^\infty e^{-\lambda t} \varphi(t) dt \quad \textrm{iff} \quad \frac{\partial u}{\partial t} (t, 0) \leq \varphi(t)
\end{align*}
for a given function $\varphi: (0, \infty) \mapsto (0, \infty)$ for which the Laplace transform exists. We observe that, from \eqref{uLapSolZero}, 
\begin{align}
\lambda \tilde{u}(\lambda, 0) - f(0) 
\leq & \frac{\sigma \sqrt{\lambda}}{c + \lambda^\alpha \eta + \sigma \sqrt{\lambda}} \|f\|_\infty + \frac{\eta \lambda^\alpha}{c + \eta \lambda^\alpha + \sigma \sqrt{\lambda}} f(0) - f(0) \label{tmpMLuZero}\\
= & \frac{\sigma \sqrt{\lambda}}{c + \lambda^\alpha \eta + \sigma \sqrt{\lambda}} \|f\|_\infty - \frac{c + \sigma \sqrt{\lambda}}{c + \eta \lambda^\alpha + \sigma \sqrt{\lambda}}f(0)\notag  \\
\leq & \big( \| f \|_\infty - f(0)\big)  \frac{c + \sigma \sqrt{\lambda}}{c + \eta \lambda^\alpha + \sigma \sqrt{\lambda}}, \quad \lambda \geq 0. \notag
\end{align}
This leads to \eqref{estMiddleZero}. By exploiting the Young's inequality  $a+b \geq 2 \sqrt{ab}$ for $a=c+\sigma\sqrt{\lambda}$ and $b=\eta \lambda^\alpha$, we obtain
\begin{align}
\frac{c + \sigma \sqrt{\lambda}}{c + \eta \lambda^\alpha + \sigma \sqrt{\lambda}} \leq \frac{1}{2} \sqrt{(c/\eta)\frac{1}{\lambda^\alpha}+ (\sigma/\eta)\frac{\sqrt{\lambda}}{\lambda^\alpha}} 
\end{align}
and this proves \eqref{estMiddle}. Since $-f \leq |f|$ we also get
\begin{align*}
\big( \| f \|_\infty - f(0)\big) \leq \big( \| f \|_\infty + |f(0)|\big) \leq 2 \|f\|_\infty
\end{align*}
and
\begin{align*}
\lambda \tilde{u}(\lambda, 0) - f(0) \leq  \| f \|_\infty \, \sqrt{(c/\eta)\frac{1}{\lambda^\alpha}+ (\sigma/\eta)\frac{\sqrt{\lambda}}{\lambda^\alpha}}, \quad \lambda \geq 0.
\end{align*} 
Moreover,
\begin{align*}
f(0) - \lambda \tilde{u}(\lambda, 0) 
= & \frac{c + \sigma \sqrt{\lambda}}{c + \eta \lambda^\alpha + \sigma \sqrt{\lambda}} f(0) - \frac{\sigma \lambda}{c + \lambda^\alpha \eta + \sigma \sqrt{\lambda}} \int_0^\infty e^{-y \sqrt{\lambda}} f(y) dy\\
\leq & \frac{c + \sigma \sqrt{\lambda}}{c + \eta \lambda^\alpha + \sigma \sqrt{\lambda}} f(0) + \frac{\sigma \lambda}{c + \lambda^\alpha \eta + \sigma \sqrt{\lambda}} \int_0^\infty e^{-y \sqrt{\lambda}} |f(y)| dy\\
\leq & \frac{c + \sigma \sqrt{\lambda}}{c + \eta \lambda^\alpha + \sigma \sqrt{\lambda}} f(0) + \frac{\sigma \sqrt{\lambda}}{c + \lambda^\alpha \eta + \sigma \sqrt{\lambda}} \| f\|_\infty\\
\leq & \frac{c + \sigma \sqrt{\lambda}}{c + \eta \lambda^\alpha + \sigma \sqrt{\lambda}} \big( f(0) + \| f\|_\infty \big)
\end{align*}
where
\begin{align*}
\big( f(0) + \| f\|_\infty \big) \leq 2 \|f \|_\infty.
\end{align*}
Thus,
\begin{align*}
f(0) - \lambda \tilde{u}(\lambda, 0) \leq \| f \|_\infty \, \sqrt{(c/\eta)\frac{1}{\lambda^\alpha}+ (\sigma/\eta)\frac{\sqrt{\lambda}}{\lambda^\alpha}}, \quad \lambda >0
\end{align*}
says that
\begin{align*}
\lambda \tilde{u}(\lambda, 0) - f(0) \geq - \| f \|_\infty \, \sqrt{(c/\eta)\frac{1}{\lambda^\alpha}+ (\sigma/\eta)\frac{\sqrt{\lambda}}{\lambda^\alpha}}, \quad \lambda >0
\end{align*}
which is \eqref{estBelow}. We conclude that
\begin{align*}
| \lambda \tilde{u}(\lambda, 0) - f(0) | \leq  \| f \|_\infty \, \sqrt{(c/\eta)\frac{1}{\lambda^\alpha}+ (\sigma/\eta)\frac{\sqrt{\lambda}}{\lambda^\alpha}}, \quad \lambda > 0
\end{align*}
as stated in \eqref{estModulus}.\\

Now we use \eqref{uLapSolZero} once again and write
\begin{align}
\lambda \tilde{u}(\lambda,0) - f(0)
\leq & \frac{\sigma \sqrt{\lambda}}{c + \lambda^\alpha \eta + \sigma \sqrt{\lambda}} \| f\|_\infty - \frac{(c + \sigma \sqrt{\lambda})}{c + \eta \lambda^\alpha + \sigma \sqrt{\lambda}} f(0) \notag \\
= & \big( \| f \|_\infty - f(0)\big) \frac{\sigma \sqrt{\lambda}}{c + \lambda^\alpha \eta + \sigma \sqrt{\lambda}} - \frac{c}{c + \eta \lambda^\alpha + \sigma \sqrt{\lambda}} f(0) . \label{estMproof}
\end{align}
In case $\| f\|_\infty = f(0)$ we get
\begin{align}
\lambda \tilde{u}(\lambda,0) - f(0)
\leq - \frac{c}{c + \eta \lambda^\alpha + \sigma \sqrt{\lambda}} f(0).
\label{tmpNeg}
\end{align}
Thus, by observing that
\begin{align*}
\mathbf{E} e^{-\theta H_t} = e^{-t \theta^\alpha}, 
\end{align*}
from 
\begin{align*}
\frac{c}{c + \eta \lambda^\alpha + \sigma \sqrt{\lambda}} = c \int_0^\infty e^{-c t} e^{- t \sigma \sqrt{\lambda} - t \eta \lambda^\alpha} dt
\end{align*}
we prove \eqref{estMiddleSharp}.
\end{proof}

\begin{remark}
We argue on $u \in D_L$. This ensures that
\begin{align*}
\big| D^\alpha_t u(t,0) \big| \leq \frac{1}{\Gamma(1-\alpha)} \int_0^t \left|\frac{\partial u}{\partial s}(s,0) \right| (t-s)^{-\alpha} ds  < \infty.
\end{align*}

If $\sigma=0$ in \eqref{estMiddle}, then
\begin{align*}
\sqrt{c/\eta} \frac{1}{\lambda^{\alpha/2}} = \sqrt{c/\eta} \frac{1}{\Gamma(\alpha/2)}  \int_0^\infty e^{-\lambda t} t^{\alpha/2 -1} dt
\end{align*}
and $\forall\, \alpha/2>0$,
\begin{align*}
\int_0^t \frac{\partial u}{\partial s}(s,0) (t-s)^{-\alpha} ds \leq (\|f\|_\infty  - f(0)) \int_0^t  t^{\alpha/2 -1} (t-s)^{-\alpha} ds < \infty. 
\end{align*}
Moreover, from \eqref{estModulus} and the fact that
\begin{align*}
\bigg| \frac{\partial u}{\partial t}(t, 0) \bigg| < \infty\;\; a.e.
\end{align*}
we conclude that
\begin{align}
\exists\, \mathcal{C}>0\,:\,  \bigg| \frac{\partial u}{\partial t}(t, 0) \bigg| \leq \mathcal{C}\,  \| f\|_\infty\, \frac{\sqrt{(c/\eta)}}{2}  t^{\alpha/2-1}, \quad t>0.
\label{estMmodulus}
\end{align}
We notice that $\sigma=0$ is strictly related with $f(0)=\|f\|_\infty$ as formula \eqref{estMproof} entails. In this regard, we notice that $f\geq 0$ and formula \eqref{estMiddleSharp} says also that
\begin{align*}
\tilde{u}(\lambda, 0) \leq f(0) \frac{1}{\lambda} \mathbf{E} \left[ 1 - \exp(- \lambda H^{1/2}_{\sigma \chi} - \lambda H_{\eta \chi}) \right], \quad 0 \leq \sigma < \infty, \; 0 \leq \eta < \infty
\end{align*}
where $\chi$ is an independent exponential r.v. with parameter $c>0$. Observe that $\sigma=\eta=0$ gives the Dirichlet boundary condition. For $\sigma=0$ we have 
\begin{align*}
\tilde{u}(\lambda, 0) \leq f(0) \frac{1}{\lambda}\mathbf{E} \left[ 1 - \exp(- \lambda H \circ e_0) \right], \quad 0 \leq \eta < \infty
\end{align*}
where $e_0$ is the holding time (at zero) for $X$ (see also Theorem \ref{thm:holdingTime} below). \\

If $c=0$, from \eqref{estMiddle} we write
\begin{align*}
\frac{\partial u}{\partial t}(t, 0) \leq \varphi(t) \quad \textrm{with} \quad \int_0^\infty e^{-\lambda t} \varphi(t)dt = C\, \lambda^{1/4 - \alpha/2}, \quad \lambda >0
\end{align*}
for some constant $C\leq (\|f\|_\infty - f(0)) \sqrt{\sigma/\eta}$. Thus, 
\begin{align*}
D^\alpha_t u(t,0) \leq \frac{1}{\Gamma(1-\alpha)} \int_0^t \varphi(s) (t-s)^{-\alpha} ds
\end{align*}
and 
\begin{align*}
\int_0^\infty e^{-\lambda t} D^\alpha_t u(t,0) \,dt \leq  \frac{\lambda^\alpha}{\lambda} \,C \,\lambda^{1/4 - \alpha/2} = C\, \lambda^{1/4 + \alpha/2 - 1}.
\end{align*}
We conclude that 
\begin{align*}
D^\alpha_t u(t,0) \leq C \, t^{(1 - 1/4 - \alpha/2)-1}, \quad t>0
\end{align*}
and $u \in D_L$.
\end{remark}

\begin{remark}
Finally we focus on \eqref{tmpMLuZero} with $\sigma=0$. The inequality
\begin{align*}
\lambda \tilde{u}(\lambda, 0) 
\leq & f(0)\, \frac{\eta \lambda^\alpha}{c + \eta \lambda^\alpha}
\end{align*}
implies that
\begin{align*}
\tilde{u}(\lambda, 0) \leq & f(0) \int_0^\infty e^{-\lambda t} E_\alpha (-(c/\eta) t^\alpha)\, dt, \quad \lambda>0.
\end{align*}
That is,
\begin{align*}
u(t,0) \leq f(0)\, E_\alpha (-(c/\eta) t^\alpha) = f(0)\, \mathbf{P}(\chi >t) 
\end{align*}
where $\chi$ denotes a Mittag-Leffler random variable with parameters $\alpha$ and $(c/\eta)$.
\end{remark}

\begin{figure}
\centering
\includegraphics[scale=.5]{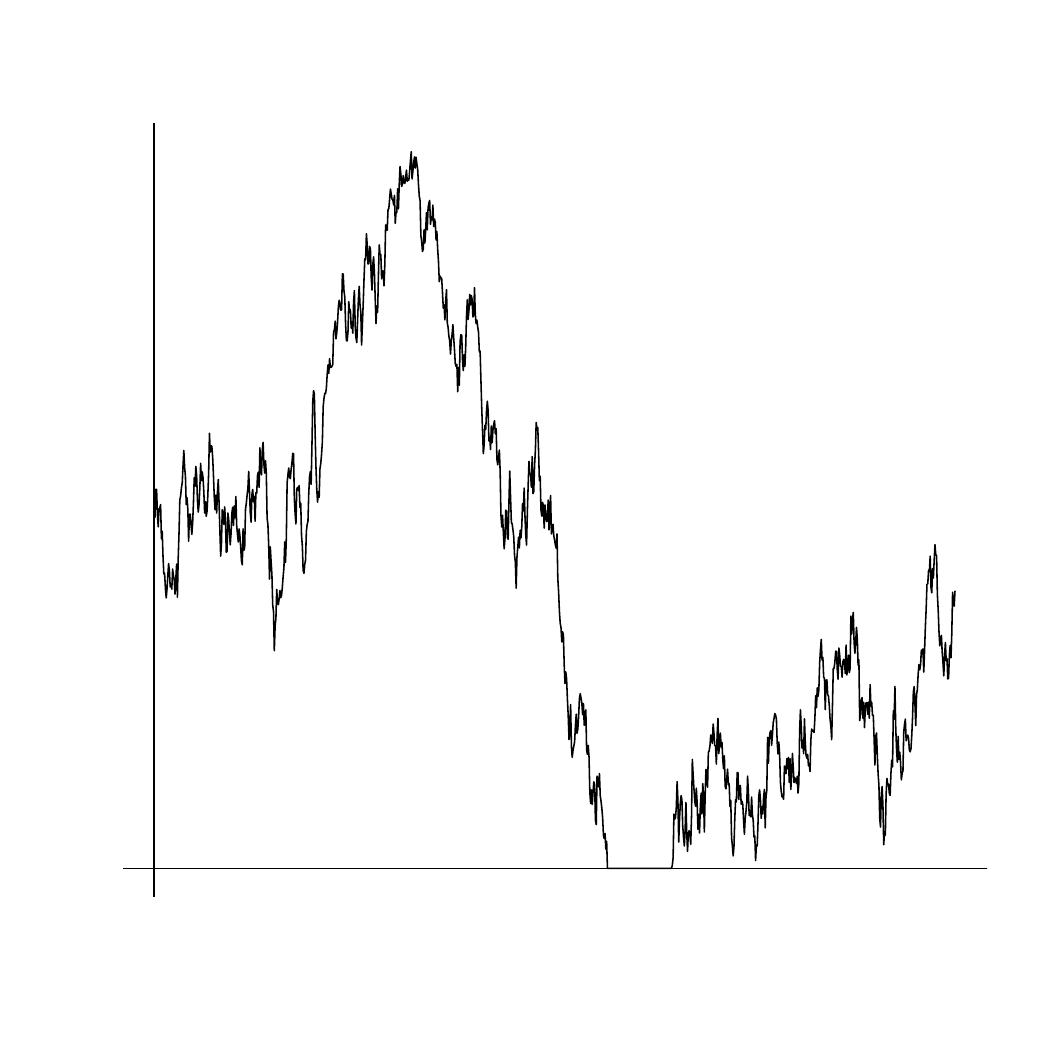} 
\caption{A representation of $\bar{X}$ on $[0, \infty)$ as a motion on the path of $X^+$ (a Brownian motion reflected at zero). The plateau is given by the inverse of $\bar{V}_t = t + H \circ (\eta/\sigma) \gamma^+_t$. As the local time at zero $\gamma^+$ increases the jump of $H$ produces a plateau for $\bar{V}^{-1}_t$. According with this plateau, the process $X^+ \circ \bar{V}^{-1}_t$ spends more time on the boundary point $\{0\}$. The path exhibits a delayed reflection. The delay is given by $H$ which is independent from $X^+$. This delay is the holding time with Mittag-Leffler distribution.}
\label{Fig:barX}
\end{figure}

\subsection{On the occupation times for $\bar{X}$}

We recall that $\{t\,:\, X^+_t=0\}$ is a perfect closed set of zero Lebesgue measure. The definition of $\gamma^+$ can be therefore given with some care as a proper limit which holds a.s. and $L^2(0,\infty)$. On the other hand $\{t\,:\, X_t=0\}$ has positive Lebesgue measure as well as $\{t\,:\, \bar{X}_t=0\}$. For $\alpha \in (0,1]$ and $\epsilon > 0$ we can write 
\begin{align}
\mathbf{P}_0(0 \leq \bar{X}_t \leq \epsilon) = \int_0^\epsilon \bar{p}(t,0,y)dy + (\eta/\sigma) \bar{p}(t,0,0)
\label{probIntZero}
\end{align}
whose potential writes
\begin{align}
\label{potZeroEpsilon}
\mathbf{E}_0\left[ \int_0^\infty e^{-\lambda t} \mathbf{1}_{[0, \epsilon]}(\bar{X}_t) dt  \right] 
= & \frac{\sigma}{c + \lambda^\alpha \eta + \sigma \sqrt{\lambda}} \frac{1 - e^{-\epsilon \sqrt{\lambda}}}{\sqrt{\lambda}} + \frac{\lambda^\alpha}{\lambda} \frac{\eta}{c + \eta \lambda^\alpha + \sigma \sqrt{\lambda}}
\end{align}
as obtained in \eqref{uLapSolZero}. Notice that both limits $\epsilon \to 0$ and $\epsilon \to \infty$ make sense and 
\begin{align*}
\int_0^\infty e^{-\lambda t} \bar{p}(t,0,0) dt = \frac{\lambda^\alpha}{\lambda} \frac{\sigma}{c + \eta \lambda^\alpha + \sigma \sqrt{\lambda}}, \quad \lambda>0
\end{align*}
becomes
\begin{align*}
\int_0^\infty e^{-\lambda t} p(t,0,0) dt = \frac{\sigma}{c + \eta \lambda + \sigma \sqrt{\lambda}}, \quad \lambda>0
\end{align*}
for $\alpha=1$. We immediately see that
\begin{align*}
\int_0^\infty p(t,0,0) dt = \sigma/c \quad \textrm{and} \quad (\eta/\sigma) \int_0^\infty p(t,0,0) dt = \eta/c
\end{align*}
give interesting readings of the parameters $\eta, \sigma, c$ and their asymptotic analysis. 

We also observe that simple manipulation leads to
\begin{align*}
\bar{p}(t,0,0) = \int_0^t \frac{\sigma s}{t-z} g(t-z, \sigma s) \, \eta \,l(z, \eta s) dz
\end{align*}
where $g$ and $l$ have been respectively defined in pages \pageref{semigQD} and \pageref{MLbound}. Explicit representations of $l$ are well-known for some values of $\alpha$. For example:
\begin{itemize}
\item[i)] $\alpha=1/2$,
\begin{align*}
l(t,x) = 2 g(t,x) = 2 \frac{e^{-\frac{x^2}{4t}}}{\sqrt{4\pi t}}, \quad t>0,\, x \in (0, \infty),
\end{align*}
\item[ii)] $\alpha=1/3$,
\begin{align*}
l(t,x) = \frac{3}{\sqrt[3]{3t}}Ai\left( \frac{x}{\sqrt[3]{3t}} \right), \quad t>0,\, x \in (0, \infty)
\end{align*}
where $Ai$ is the Airy function. 
\end{itemize}
The reader can consult \cite{OrsDov2012} and the references therein for further details.\\

Now we recall that
\begin{align*}
f(\bar{X}_t) = f(X^+ \circ \bar{V}^{-1}_t) \, M \circ \bar{V}^{-1}_t
\end{align*}
where $M \leq 1$ is the multiplicative functional associated with the Robin boundary condition. If we assume $c=0$, then $\bar{X}$ has no elastic kill and we write
\begin{align*}
f(\bar{X}_t) = f(X^+ \circ \bar{V}^{-1}_t).
\end{align*} 
We present the following result concerning the processes $\bar{V}_t = t + H \circ (\eta/\sigma) \gamma^+_t$, $t\geq 0$ and the right-inverse $\bar{V}^{-1}_t = \inf\{s \,:\, \bar{V}_s >t\}$, $t \geq 0$. Recall that a.s. $\bar{V}_t \geq t$ and $\bar{V}^{-1}_t \leq t$. In particular,
\begin{align*}
\mathbf{P}(t + H\circ (\eta/\sigma)\gamma^+_t \geq s) > 0 \quad \textrm{for} \quad s \geq t. 
\end{align*}
For $\alpha=1$, $\bar{V}_t = V_t:= t + (\eta/\sigma) \gamma^+_t$ is a continuous process. We introduce the processes 
\begin{align*}
T^H_t = H \circ (\eta/\sigma) \gamma^+_t \quad \textrm{and} \quad T^L_t = \gamma^{+,-1} \circ (\sigma/ \eta) L_t
\end{align*}
such that, for $t>0$, $s>0$,
\begin{align*}
\mathbf{P}_0(T^H_s \geq t) = \mathbf{P}_0(s \geq T^L_t)
\end{align*}
where $\gamma^{+, -1} = \{\gamma^{+,-1}_t\}_{t\geq 0}$ is the inverse of $\gamma^+$. Then, we write
\begin{align*}
\bar{V}_t = t + T^H_t \quad \textrm{and} \quad \bar{\mathcal{V}}_t = t + T^L_t
\end{align*}

\begin{lemma}
The following holds true:
\begin{itemize}
\item[i)] $\mathbf{P}_0(\gamma^+_t \geq s) =\mathbf{P}_0 (\gamma^{+,-1}_s \leq t)$ where $\gamma^{+,-1}$ is a $1/2$-stable subordinator and $s,t>0$;
\item[ii)] $\mathbf{P}_0(\bar{V}_t \geq s) = \mathbf{P}_0(t \geq T^L_{s-t}), \quad 0\leq t\leq s$; 
\item[iii)] $\mathbf{P}_0(\bar{\mathcal{V}}_t \geq s) = \mathbf{P}_0 (t \geq T^H_{s-t}), \quad 0 \leq t \leq s$.
\end{itemize}
\end{lemma}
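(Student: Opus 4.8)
The plan is to reduce all three identities to a single elementary fact about a non-decreasing process $A$ and its right-continuous inverse $A^{-1}_s = \inf\{u : A_u > s\}$: one always has the inclusion $\{A^{-1}_s \leq t\} \subseteq \{A_t \geq s\}$, and the reverse inclusion fails only on the event $\{A_t = s,\ A^{-1}_s > t\}$ that $t$ lies strictly inside a flat stretch (or a jump straddle) of $A$ at the exact level $s$. The decisive observation, which I would state once and then use freely, is that for every time change appearing here this discrepancy event is $\mathbf{P}_0$-null, because the relevant one-dimensional marginal is absolutely continuous; hence each duality holds as an identity of probabilities.

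First I would prove i). The process $\gamma^+$ is continuous and non-decreasing, increasing only on the zero set of $X^+$, which has Lebesgue measure zero; thus for fixed $t>0$ one has $X^+_t \neq 0$ a.s., so $t$ lies in a flat interval of $\gamma^+$ whose level is $\gamma^+_t$. Since the law of $\gamma^+_t$ is absolutely continuous, $\mathbf{P}_0(\gamma^+_t = s) = 0$ for every fixed $s>0$, the discrepancy event is null, and $\mathbf{P}_0(\gamma^+_t \geq s) = \mathbf{P}_0(\gamma^{+,-1}_s \leq t)$ follows from the general duality. That $\gamma^{+,-1}$ is a $1/2$-stable subordinator is classical and already implicit in the identity $\int_0^\infty e^{-\lambda t}\mathbf{P}_0(X^+_t \in dy,\ \gamma^+_t \in dw)\,dt = e^{-(y+w)\sqrt{\lambda}}\,dy\,dw$ used in Theorem \ref{thm:FBVP-realLine}, which exhibits the Laplace exponent $\sqrt{\lambda}$ of $\gamma^{+,-1}$.

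Next I would record the analogous duality for $H$ and its inverse $L$, namely the relation $\mathbf{P}_0(L_t < s) = \mathbf{P}_0(t < H_s)$ from the preliminaries, upgraded to $\mathbf{P}_0(H_a \geq t) = \mathbf{P}_0(L_t \leq a)$ using that $H_a$ and $L_t$ have densities. The cleanest route to the pre-stated duality $\mathbf{P}_0(T^H_s \geq t) = \mathbf{P}_0(s \geq T^L_t)$ is then to verify directly, via the composition rule $(g\circ h)^{-1} = h^{-1}\circ g^{-1}$, that $T^L$ is the right-continuous inverse of $T^H$: since $[(\eta/\sigma)\gamma^+]^{-1}_v = \gamma^{+,-1}_{(\sigma/\eta)v}$ and $H^{-1}=L$, one gets $(H \circ (\eta/\sigma)\gamma^+)^{-1} = \gamma^{+,-1}\circ(\sigma/\eta)L = T^L$. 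Equivalently, conditioning on the independent time changes (recall $H$, hence $L$, is independent of $X^+$, hence of $\gamma^+,\gamma^{+,-1}$) and chaining the two dualities gives $\mathbf{P}_0(H((\eta/\sigma)\gamma^+_s) \geq t) = \mathbf{P}_0((\sigma/\eta)L_t \leq \gamma^+_s) = \mathbf{P}_0(\gamma^{+,-1}_{(\sigma/\eta)L_t} \leq s)$. One checks here too that $T^H_t$ (and $T^L_t$) have absolutely continuous laws, so the master null-set argument applies.

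Finally, ii) and iii) are immediate rearrangements. Since $\bar{V}_t = t + T^H_t$, the event $\{\bar{V}_t \geq s\}$ equals $\{T^H_t \geq s-t\}$ for $0 \leq t \leq s$, and the $T^H$–$T^L$ duality with threshold $s-t$ yields $\mathbf{P}_0(\bar{V}_t \geq s) = \mathbf{P}_0(t \geq T^L_{s-t})$; identity iii) follows identically from $\bar{\mathcal{V}}_t = t + T^L_t$ together with the reverse duality $\mathbf{P}_0(T^L_t \geq r) = \mathbf{P}_0(t \geq T^H_r)$ (i.e.\ $T^H = (T^L)^{-1}$). The one place where I expect the argument to need real care — and the only genuine subtlety — is the passage between strict and non-strict inequalities across the flat intervals and jumps of these subordinated time changes; this is exactly what the absolute continuity of the marginals of $\gamma^+_t$, $H_a$, $L_t$, $T^H_t$ resolves, so I would establish the nullity of all discrepancy events at the outset and then treat the dualities as exact throughout.
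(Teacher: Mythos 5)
Your proof is correct, but it takes a genuinely different route from the paper's. The paper proves ii) and iii) by explicit computation: it takes the double Laplace transform (in $t$ and in $s$) of both sides, evaluates each using $\mathbf{E}_0[e^{-\theta \gamma^+_t}]$, $\mathbf{E}[e^{-\lambda H_s}]=e^{-s\lambda^\alpha}$ and the law of $L$, finds the common value $\frac{1}{\lambda(\xi+\lambda)}\frac{\lambda^\alpha(\eta/\sigma)}{(\xi+\lambda)^{1/2}+\lambda^\alpha(\eta/\sigma)}$ (and its analogue for iii)), and concludes by injectivity of the Laplace transform, dismissing i) as easily verified. You instead reduce everything to the generalized-inverse duality $\{A_t>s\}\subseteq\{A^{-1}_s\leq t\}\subseteq\{A_t\geq s\}$, kill the discrepancy event $\{A_t=s\}$ by absolute continuity of the one-dimensional marginals, and identify $T^L=(T^H)^{-1}$. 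Your route is shorter, yields the identities pointwise in $(t,s)$ rather than a.e.\ followed by a continuity argument (a step the paper leaves implicit after inverting the transforms), and generalizes verbatim to an arbitrary subordinator in place of the stable one, since no explicit symbol is used. What the paper's computation buys in exchange is the explicit potentials: its intermediate formulas are exactly \eqref{proofV1}--\eqref{proofV2}, which are recycled immediately afterwards to identify $\mathbf{E}_0[\bar{V}^{-1}_t]$ and $\mathbf{E}_0[\bar{\mathcal{V}}^{-1}_t]$ with occupation times of $\bar{X}$; under your argument those transforms must still be computed separately. One point of care: the pathwise rule $(g\circ h)^{-1}=h^{-1}\circ g^{-1}$ can fail when a jump of $H$ over level $s$ places $L_s$ on a flat stretch of $(\eta/\sigma)\gamma^+$, and ruling this out needs more than absolute continuity of the law of $T^H_t$ alone --- one uses that the countable random set of jump levels of $\gamma^{+,-1}$ is a.s.\ avoided by the independent, absolutely continuous $L_s$. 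Your alternative chaining argument (conditioning on the independent time change and applying the $H$--$L$ and $\gamma^+$--$\gamma^{+,-1}$ dualities in turn) is airtight and should be promoted from the ``equivalently'' to the primary argument.
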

\begin{proof}
Point $i)$ can be easily verified. Thus we move on point $ii)$. We prove, $\forall\, s>0$, the identity
\begin{align*}
\int_0^s e^{-\xi t} \mathbf{P}_0(t + H\circ (\eta /\sigma)\gamma^+_t \geq s ) dt
= & \int_0^s e^{-\xi t} \mathbf{P}_0(t \geq \gamma^{+, -1} \circ (\sigma/\eta) L_{s-t} ) dt
\end{align*}
with $\xi>0$ from which $\mathbf{P}_0(\bar{V}_t \geq s) = \mathbf{P}_0(t \geq \bar{\mathcal{V}}^{-1}_{s,t})$ for $s\geq t \geq 0$. The left-hand side of the integral above gives
\begin{align*}
& \int_0^\infty e^{-\lambda s} \int_0^s e^{-\xi t} \mathbf{P}_0(t + H\circ (\eta /\sigma)\gamma^+_t \geq s ) dt\, ds \\
= &\int_0^\infty e^{-\xi t} \int_t^\infty e^{-\lambda s} \mathbf{P}_0(t + H\circ (\eta /\sigma)\gamma^+_t \geq s ) ds\, dt\\
= & \frac{1}{\lambda} \int_0^\infty e^{-\xi t}\left(e^{-\lambda t} - \int_t^\infty e^{-\lambda s}  \mathbf{P}_0(t + H\circ (\eta /\sigma)\gamma^+_t \in ds ) \right) dt\\
= & \frac{1}{\lambda (\lambda+\xi)} - \frac{1}{\lambda} \int_0^\infty e^{-\xi t}  \int_t^\infty e^{-\lambda s}  \mathbf{P}_0(t + H\circ (\eta /\sigma)\gamma^+_t \in ds )dt\\
= & \frac{1}{\lambda (\lambda+\xi)} - \frac{1}{\lambda} \int_0^\infty e^{-\xi t} \mathbf{E}_0 \left[ e^{- \lambda t - \lambda H \circ (\eta /\sigma) \gamma^+_t} \right] dt\\
= & \frac{1}{\lambda (\lambda+\xi)} - \frac{1}{\lambda} \int_0^\infty e^{-\xi t} e^{-\lambda t} \mathbf{E}_0[e^{-\lambda^\alpha (\eta / \sigma) \gamma^+_t}] dt\\
= & \frac{1}{\lambda (\lambda+\xi)} - \frac{1}{\lambda} \int_0^\infty (\xi + \lambda)^{-1/2} e^{-w (\xi+\lambda)^{1/2}} e^{-\lambda^\alpha (\eta/\sigma) w} dw\\
= & \frac{1}{\lambda (\lambda+\xi)} - \frac{1}{\lambda} \frac{(\xi + \lambda)^{-1/2}}{(\xi+\lambda)^{1/2} + \lambda^\alpha (\eta/\sigma)}\\
= & \frac{(\xi+\lambda)^{-1/2}}{\lambda} \left( \frac{1}{(\xi+\lambda)^{1/2}} - \frac{1}{(\xi+\lambda)^{1/2} + \lambda^\alpha (\eta/\sigma)}\right)\\
= & \frac{1}{\lambda (\xi + \lambda)} \frac{\lambda^\alpha (\eta/\sigma)}{(\xi+\lambda)^{1/2} + \lambda^\alpha (\eta/\sigma)}, \quad \xi >0, \quad \lambda >0
\end{align*}
which equals   
\begin{align*}
& \int_0^\infty e^{-\lambda s} \int_0^s e^{-\xi t} \mathbf{P}_0(t \geq \gamma^{+, -1} \circ (\sigma/\eta) L_{s-t} ) dt\, ds \\
= & \int_0^\infty e^{-\xi t}  \int_t^\infty e^{-\lambda s}  \mathbf{P}_0(t \geq \gamma^{+, -1} \circ (\sigma/\eta) L_{s-t} ) ds\, dt\\
= & \int_0^\infty e^{-\xi t} \int_0^\infty e^{-\lambda t} e^{-\lambda z}   \mathbf{P}_0(t \geq \gamma^{+, -1} \circ (\sigma/\eta) L_{z} )dz\, dt\\
= & \int_0^\infty e^{-(\xi+\lambda)t} \int_0^\infty e^{-\lambda z} \mathbf{P}_0(t \geq \gamma^{+, -1} \circ (\sigma/\eta) L_{z} )dz\, dt\\
= & \frac{1}{\xi + \lambda} \int_0^\infty e^{-\lambda z} \mathbf{E}_0[e^{-(\xi+\lambda) \gamma^{+,-1} \circ (\sigma/\eta)L_z}] dz\\
= & \frac{1}{\xi + \lambda} \int_0^\infty \mathbf{E}_0[e^{-(\xi+\lambda) \gamma^{+,-1} \circ (\sigma/\eta)w}] \lambda^{\alpha-1} e^{-\lambda^\alpha w} dw\\
= & \frac{1}{\xi + \lambda} \int_0^\infty e^{-(\xi+\lambda)^{1/2} (\sigma/\eta)w} \lambda^{\alpha-1} e^{-\lambda^\alpha w} dw\\
= & \frac{1}{\lambda (\xi + \lambda)}  \frac{\lambda^{\alpha}}{(\xi+\lambda)^{1/2} (\sigma/\eta) +\lambda^\alpha }, \quad \xi>0, \quad \lambda >0
\end{align*}
from the right-hand side. Concerning the point $iii)$ we simply follow the same arguments as before and we evaluate
\begin{align*}
& \int_0^\infty e^{-\xi t} \int_t^\infty e^{-\lambda s} \mathbf{P}_0(t + \gamma^{+, -1} \circ (\sigma/\eta)L_t \geq s) ds\, dt\\
= & \frac{1}{\lambda} \int_0^\infty e^{-\xi t} \left(e^{-\lambda t} - \mathbf{E}_0 [e^{-\lambda t - \lambda \gamma^{+, -1}\circ (\sigma/\eta)L_t}] \right) dt\\
= &  \frac{1}{\lambda (\xi + \lambda)} - \frac{1}{\lambda} \int_0^\infty e^{-\xi t}\mathbf{E}_0 [e^{-\lambda t - \lambda^{1/2} (\sigma/\eta)L_t}] dt\\
= & \frac{1}{\lambda (\xi + \lambda)} - \frac{1}{\lambda} \int_0^\infty e^{-\lambda^{1/2} (\sigma/\eta) w} (\xi + \lambda)^{\alpha-1} e^{-(\xi + \lambda)^\alpha w} dw\\
= & \frac{1}{\lambda (\xi + \lambda)} - \frac{1}{\lambda (\xi+\lambda)} \frac{(\xi+\lambda)^\alpha}{(\xi+\lambda)^\alpha + \lambda^{1/2}(\sigma/\eta)}\\
= & \frac{1}{\lambda (\xi + \lambda)} \frac{\lambda^{1/2} (\sigma/\eta)}{(\xi+\lambda)^\alpha + \lambda^{1/2}(\sigma/\eta)}, \quad \xi>0,\quad \lambda>0.
\end{align*}
Since
\begin{align*}
& \int_0^\infty e^{-\xi t} \int_t^\infty e^{-\lambda s} \mathbf{P}_0 (t \geq T^H_{s-t})\, ds\, dt\\
= & \int_0^\infty e^{-(\xi +\lambda)t} \int_0^\infty e^{-\lambda z} \mathbf{P}_0 (t \geq T^H_{z})\, dz\, dt\\
= & \int_0^\infty e^{-\lambda z} \frac{1}{\xi + \lambda} \mathbf{E}_0 [e^{-(\xi + \lambda) H \circ (\eta/\sigma) \gamma^+_z}] dz\\
= & \frac{1}{\xi + \lambda} \mathbf{E}_0 \left[ \int_0^\infty e^{-(\xi + \lambda) z} e^{-(\xi+\lambda)^\alpha (\eta/\sigma) \gamma^+_z} dz\right]\\
= & \frac{1}{\xi + \lambda} \int_0^\infty e^{-(\xi + \lambda)^\alpha (\eta/\sigma) w} \lambda^{-1/2} e^{-w \lambda^{1/2}} dw\\
= &   \frac{\lambda^{-1/2}}{\xi + \lambda} \frac{1}{(\xi+\lambda)^\alpha (\eta/\sigma) + \lambda^{1/2}}
\end{align*}
we conclude that $iii)$ holds true. 
\end{proof}

For the reader's convenience we report the following identities which are obtained from the potentials above:
\begin{align}
& \int_0^\infty e^{-\lambda s} \int_0^\infty \mathbf{P}(t + H\circ (\eta/\sigma)\gamma^+_t \geq s) dt\, ds\notag \\
= & \lim_{\xi \to 0} \frac{1}{\lambda (\xi + \lambda)} \frac{\lambda^\alpha (\eta/\sigma)}{(\xi+\lambda)^{1/2} + \lambda^\alpha (\eta/\sigma)} \notag \\
= & \frac{1}{\lambda} \frac{\lambda^\alpha}{\lambda} \frac{\eta}{\eta \lambda^\alpha + \sigma \sqrt{\lambda}}, \quad \lambda>0 \label{proofV1}
\end{align}
and
\begin{align}
& \int_0^\infty e^{-\lambda s} \int_0^\infty \mathbf{P}(t + \gamma^{+, -1} \circ (\sigma/\eta) L_t \geq s) dt\, ds \notag \\
= & \lim_{\xi \to 0} \frac{1}{\lambda (\xi + \lambda)} \frac{\lambda^{1/2} (\sigma/\eta)}{(\xi+\lambda)^\alpha + \lambda^{1/2}(\sigma/\eta)}, \quad \lambda>0\notag \\
= & \frac{1}{\lambda} \frac{\sigma}{\sigma \sqrt{\lambda} + \eta \lambda^\alpha} \frac{1}{\sqrt{\lambda}} \label{proofV2}\\
= & \frac{1}{\lambda} \frac{\sigma}{\sigma \sqrt{\lambda} + \eta \lambda^\alpha} \int_0^\infty e^{-y \sqrt{\lambda}} dy. \notag
\end{align}
We can recognize \eqref{proofV1} and \eqref{proofV2} in formula \eqref{uLapSolZero} for $c=0$ and suitable initial data. These formulas will be useful further on. We also observe that
\begin{align*}
& \int_0^\infty e^{-\lambda s} \int_0^s \mathbf{P}_0(t + H\circ (\eta /\sigma)\gamma^+_t \leq s ) dt\, ds \\
= & \lim_{\xi \to 0}\frac{1}{\lambda} \frac{(\xi + \lambda)^{-1/2}}{(\xi+\lambda)^{1/2} + \lambda^\alpha (\eta/\sigma)}, \quad \lambda >0 \\
= &  \frac{1}{\lambda} \frac{\sigma}{\sigma\sqrt{\lambda} + \eta \lambda^\alpha } \frac{1}{\sqrt{\lambda}}
\end{align*}
coincides with \eqref{proofV2} and can be regarded as
\begin{align*}
\int_0^\infty e^{-\lambda s} \int_0^s \mathbf{P}_0(\bar{V}_t \leq s ) dt\, ds = \int_0^\infty e^{-\lambda s} \int_0^s \mathbf{P}_0(\bar{V}^{-1}_s \geq t ) dt\, ds
\end{align*}
where
\begin{align*}
\int_0^s \mathbf{P}_0(\bar{V}_s^{-1} \geq t) dt = \int_0^\infty \mathbf{P}_0(\bar{V}_s^{-1} \geq t) dt = \mathbf{E}_0[\bar{V}_s^{-1}], \quad s\geq 0.
\end{align*}
Formula \eqref{proofV2} leads to
\begin{align*}
& \int_0^\infty e^{-\lambda s} \int_0^s e^{-\xi t} \mathbf{P}_0(t + \gamma^{+, -1} \circ (\sigma/\eta)L_t \leq s) dt\, ds\\
= & \lim_{\xi \to 0} \frac{1}{\lambda (\xi +\lambda)} \frac{(\xi + \lambda)^\alpha}{(\xi + \lambda)^\alpha + \lambda^{1/2} (\sigma/\eta)}, \quad \lambda>0\\
= & \frac{1}{\lambda}\frac{\lambda^\alpha}{\lambda} \frac{\eta}{\lambda^\alpha + (\sigma/\eta)\sqrt{\lambda}}
\end{align*}
which coincides with \eqref{proofV1} and is the Laplace transform of
\begin{align*}
\int_0^s \mathbf{P}(\bar{\mathcal{V}}^{-1}_s \geq t) dt = \int_0^\infty \mathbf{P}(\bar{\mathcal{V}}^{-1}_s \geq t) dt = \mathbf{E}_0[\bar{\mathcal{V}}^{-1}_s], \quad s\geq 0.
\end{align*}

We are now ready for the next result.

\begin{theorem}
For $\lambda>0$,
\begin{align*}
\mathbf{E}_0\left[ \int_0^\infty e^{-\lambda t} \bar{V}^{-1}_t\, dt \right] = \mathbf{E}_0\left[ \int_0^\infty e^{-\lambda t} \int_0^t \mathbf{1}_{(0, \infty)}(\bar{X}_s) ds\, dt  \right]
\end{align*}
and
\begin{align*}
\mathbf{E}_0\left[ \int_0^\infty e^{-\lambda t} \bar{\mathcal{V}}^{-1}_t\, dt \right] = \mathbf{E}_0\left[ \int_0^\infty e^{-\lambda t} \int_0^t \mathbf{1}_{\{0\}}(\bar{X}_s) ds  \right].
\end{align*}
\end{theorem}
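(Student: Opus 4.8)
The plan is to prove both identities through their Laplace transforms in the outer time variable, computing each side in closed form and matching them. I prefer this to a pathwise argument because, although the first identity does hold pathwise — substituting $s=\bar V_r$ and using that $\bar V$ jumps only while $\gamma^+$ increases (i.e.\ only while $X^+$ sits at $0$), together with the fact that $X^+$ spends zero Lebesgue time at the boundary, gives $\int_0^t\mathbf 1_{(0,\infty)}(\bar X_s)\,ds=\bar V^{-1}_t$ a.s.\ — the second does not: pathwise one has $\int_0^t\mathbf 1_{\{0\}}(\bar X_s)\,ds=t-\bar V^{-1}_t$, which is a genuinely different random variable from $\bar{\mathcal V}^{-1}_t$, their means only happening to coincide. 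A single Laplace-transform computation handles both cases uniformly.

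For the left-hand sides nothing new is required. The discussion preceding the statement already produced, via the chain of inversions ending in \eqref{proofV2}, the identity
\[
\int_0^\infty e^{-\lambda s}\,\mathbf{E}_0[\bar V^{-1}_s]\,ds=\frac{1}{\lambda}\,\frac{\sigma}{\sigma\sqrt{\lambda}+\eta\lambda^\alpha}\,\frac{1}{\sqrt{\lambda}},
\]
and, via the companion computation ending in \eqref{proofV1},
\[
\int_0^\infty e^{-\lambda s}\,\mathbf{E}_0[\bar{\mathcal V}^{-1}_s]\,ds=\frac{1}{\lambda}\,\frac{\lambda^\alpha}{\lambda}\,\frac{\eta}{\eta\lambda^\alpha+\sigma\sqrt{\lambda}}.
\]
These are exactly the left members of the two claimed equalities.

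For the right-hand sides I would work under the standing assumption $c=0$, so that $f(\bar X_t)=f(X^+\circ\bar V^{-1}_t)$ with no elastic kill. By Tonelli I exchange $\mathbf{E}_0$ with the double integral and use $\int_0^\infty e^{-\lambda t}\big(\int_0^t\psi(s)\,ds\big)\,dt=\lambda^{-1}\int_0^\infty e^{-\lambda s}\psi(s)\,ds$ with $\psi(s)=\mathbf{P}_0(\bar X_s\in A)$, reducing each right-hand side to $\lambda^{-1}$ times the Laplace transform of $s\mapsto\mathbf{P}_0(\bar X_s\in A)$, for $A=(0,\infty)$ and $A=\{0\}$ respectively. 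The two occupation probabilities are read off from the decomposition \eqref{probIntZero}--\eqref{potZeroEpsilon} with $c=0$: letting $\epsilon\to\infty$, the continuous (Lebesgue) part gives $\int_0^\infty e^{-\lambda s}\mathbf{P}_0(\bar X_s\in(0,\infty))\,ds=\sigma\,[(\eta\lambda^\alpha+\sigma\sqrt{\lambda})\sqrt{\lambda}]^{-1}$, while the atom $(\eta/\sigma)\bar p(s,0,0)=\mathbf{P}_0(\bar X_s=0)$ gives $\int_0^\infty e^{-\lambda s}\mathbf{P}_0(\bar X_s=0)\,ds=(\lambda^\alpha/\lambda)\,\eta\,(\eta\lambda^\alpha+\sigma\sqrt{\lambda})^{-1}$. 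Multiplying each by $\lambda^{-1}$ reproduces precisely \eqref{proofV2} and \eqref{proofV1}, so the four transforms agree in pairs and, by injectivity of the Laplace transform, the stated identities follow.

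The delicate point, and the heart of the second identity, is the justification that the occupation of $\bar X$ at the single point $\{0\}$ has positive expected Lebesgue mass equal to the atom, i.e.\ $\mathbf{E}_0[\int_0^t\mathbf 1_{\{0\}}(\bar X_s)\,ds]=\int_0^t(\eta/\sigma)\bar p(s,0,0)\,ds$. This is exactly the feature separating $\bar X$ from $X^+$: the reflected motion spends no Lebesgue time at the boundary, whereas for $\bar X$ the boundary mass is carried by the $\delta_0$-component of $m(dz)=dz+(\eta/\sigma)\delta_0(dz)$, which is what renders $\{t:\bar X_t=0\}$ a set of positive Lebesgue measure. Granting that the atom in \eqref{probIntZero} is genuinely $\mathbf{P}_0(\bar X_s=0)$ — and that the separation of the two parts in the $\epsilon\to\infty$ limit is legitimate, which is a monotone-convergence argument — the remaining steps (the Tonelli interchange and the algebraic matching against \eqref{proofV1}--\eqref{proofV2}) are routine.
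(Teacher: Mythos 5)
Your proof is correct and essentially the paper's own: both sides are matched in Laplace transform against \eqref{proofV1}--\eqref{proofV2}, with the right-hand sides computed from \eqref{uLapSolZero}/\eqref{potZeroEpsilon} at $c=0$ for the initial data $\mathbf{1}_{(0,\infty)}$ and $\mathbf{1}_{\{0\}}$ --- exactly the computation in the paper's proof. Your preliminary pathwise observation (that $\int_0^t\mathbf{1}_{(0,\infty)}(\bar X_s)\,ds=\bar V^{-1}_t$ a.s., whereas the second identity holds only in expectation because $\int_0^t\mathbf{1}_{\{0\}}(\bar X_s)\,ds=t-\bar V^{-1}_t$ differs pathwise from $\bar{\mathcal V}^{-1}_t$) is a correct and clarifying addition not made explicit in the paper.
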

\begin{proof}
Let us take $f=\mathbf{1}_{(0,\infty)}$ in \eqref{uLapSolZero} with $c=0$. Then, 
\begin{align*}
\frac{1}{\lambda} \tilde{u}(\lambda, 0) = \mathbf{E}_0\left[ \int_0^\infty e^{-\lambda t} \int_0^t \mathbf{1}_{(0, \infty)}(\bar{X}_s) ds\, dt  \right] 
\end{align*}
which coincides with \eqref{proofV2}, that is with 
\begin{align*}
\int_0^\infty e^{-\lambda t} \mathbf{E}_0[\bar{V}^{-1}_t] dt.
\end{align*}
On the other hand, by taking 
\begin{align*}
f(x) = \left\lbrace
\begin{array}{ll}
0, & x \in (0, \infty)\\
1, & x=0 
\end{array}
\right .
\end{align*}
we obtain 
\begin{align*}
u(t, 0) 
= & \mathbf{E}_0[f(\bar{X}_t)]\\ 
= & \int_{[0, \infty)} f(y) \bar{p}(t,0,y) m(dy)\\ 
= & (\eta/\sigma) \int_{[0, \infty)} f(y) \bar{p}(t,0,y)\delta_0(dy)\\ 
= & (\eta/\sigma) \bar{p}(t, 0, 0).
\end{align*}
From \eqref{uLapSolZero} and the fact that
\begin{align*}
\frac{1}{\lambda} \tilde{u}(\lambda, 0) = \int_0^\infty e^{-\lambda t} \int_0^t u(s, 0) ds\, dt = \int_0^\infty e^{-\lambda t} \int_0^t \mathbf{E}_0[f(\bar{X}_s)] ds\, dt
\end{align*}
we write
\begin{align*}
\frac{1}{\lambda} \tilde{u}(\lambda, 0) = \int_0^\infty e^{-\lambda t} \mathbf{E}_0 \left[ \int_0^t \mathbf{1}_{\{0\}} (\bar{X}_s) ds  \right] dt
\end{align*}
which coincides with \eqref{proofV1} and therefore with
\begin{align*}
\int_0^\infty e^{-\lambda t} \mathbf{E}_0[\bar{\mathcal{V}}^{-1}_t] dt.
\end{align*}
\end{proof}

We study the occupation measure $\mu_{\bar{X}}$ defined as
\begin{align*}
\mu_{\bar{X}}(x, S) := \mathbf{E}_x\left[ \int_0^{\tau_{\bar{X}}(\Lambda)} \mathbf{1}_{S}(\bar{X}_s) ds \right], \quad x \in \Lambda \subseteq [0, \infty), \quad S \subset \Lambda, \quad \alpha \in (0,1]
\end{align*}
and for which
\begin{align*}
\mu_{\bar{X}}(x,\Lambda) = \mathbf{E}_x[\tau_{\bar{X}}(\Lambda)], \quad x \in \Lambda, \quad \alpha \in (0,1].
\end{align*}

\begin{lemma}
Let $\Lambda$ be an interval with $m(\Lambda)<\infty$.
\begin{itemize}
\item [i)] If $\Lambda \subset (0, \infty)$, then 
\begin{align*}
\forall\, \alpha \in (0,1], \quad \forall\, x \in \Lambda, \quad \mathbf{E}_x[\tau_{\bar{X}}(\Lambda) ]< \infty.
\end{align*} 
\item [ii)] Otherwise, 
\begin{align*}
\forall\, \alpha \in (0,1), \quad \forall\, x \in \Lambda, \quad \mathbf{E}_x[\tau_{\bar{X}}(\Lambda) ] = \infty
\end{align*}
and
\begin{align*}
\quad \forall\, x \in \Lambda, \quad \mathbf{E}_x[\tau_{X}(\Lambda) ] < \infty.
\end{align*}
\end{itemize} 
\end{lemma}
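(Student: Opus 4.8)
The plan is to reduce the exit time of the time-changed (and, for $\alpha\in(0,1)$, non-Markovian) process $\bar{X}$ to the exit time of the base reflected Brownian motion $X^+$ seen through the clock $\bar{V}$, and then to decide finiteness of its mean from the moments of the stable subordinator $H$. I work with $c=0$, so the multiplicative functional equals $1$ and no elastic kill occurs, and I use the representation $\bar{X}_t = X^+\circ\bar{V}^{-1}_t$ together with $\bar{V}_t = t + H\circ(\eta/\sigma)\gamma^+_t$ and the independence of $H$ from $X^+$. Writing $\tau^+:=\tau_{X^+}(\Lambda)=\inf\{s:X^+_s\notin\Lambda\}$ and using that $\bar{V}^{-1}$ is continuous and nondecreasing, the exit of $\bar{X}$ from $\Lambda$ takes place exactly when $\bar{V}^{-1}$ reaches $\tau^+$, so that
\begin{align*}
\tau_{\bar{X}}(\Lambda) = \bar{V}_{\tau^+} = \tau^+ + H\circ(\eta/\sigma)\gamma^+_{\tau^+}.
\end{align*}
Since $\mu_{\bar{X}}(x,\Lambda)=\mathbf{E}_x[\tau_{\bar{X}}(\Lambda)]$, everything reduces to the expectation of the right-hand side.

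For i), let $\Lambda=(a,b)$ with $0<a<b<\infty$ (the interval is bounded because $m(\Lambda)<\infty$). Up to $\tau^+$ the motion $X^+$ never touches $\{0\}$, hence $\gamma^+_{\tau^+}=0$ and $\bar{V}_s=s$ for $s\le\tau^+$; consequently $\tau_{\bar{X}}(\Lambda)=\tau^+$ for every $\alpha\in(0,1]$. On $(a,b)$ the reflected motion coincides with free Brownian motion, whose mean exit time is the solution of $\phi''=-1$ with $\phi(a)=\phi(b)=0$, giving $\mathbf{E}_x[\tau^+]=\tfrac12(x-a)(b-x)<\infty$.

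For ii), let $\Lambda=[0,b)$ with $0<b<\infty$. The expected hitting time of $b$ by reflected Brownian motion solves $\phi''=-1$ with $\phi'(0)=0$ and $\phi(b)=0$, so $\mathbf{E}_x[\tau^+]=\tfrac12(b^2-x^2)<\infty$, and the whole dichotomy sits in the term $H\circ(\eta/\sigma)\gamma^+_{\tau^+}$. Conditioning on $X^+$ and using independence,
\begin{align*}
\mathbf{E}_x\big[H\circ(\eta/\sigma)\gamma^+_{\tau^+}\big]=\mathbf{E}_x\big[\psi\big((\eta/\sigma)\gamma^+_{\tau^+}\big)\big],\qquad \psi(t):=\mathbf{E}[H_t].
\end{align*}
From $\mathbf{E}[e^{-\xi H_t}]=e^{-t\xi^\alpha}$ one reads $\psi(t)=+\infty$ for every $t>0$ when $\alpha\in(0,1)$; since from any $x\in[0,b)$ the reflected motion reaches $\{0\}$ before $b$ with positive probability, $\mathbf{P}_x(\gamma^+_{\tau^+}>0)>0$ and therefore $\mathbf{E}_x[\tau_{\bar{X}}(\Lambda)]=\infty$. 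For the process $X$, that is $\alpha=1$, one has $H_t=t$, whence $\tau_X(\Lambda)=\tau^++(\eta/\sigma)\gamma^+_{\tau^+}$; here $\mathbf{E}_x[\gamma^+_{\tau^+}]$ is the Green potential of the reflected-at-$0$, killed-at-$b$ motion against the Revuz measure $\delta_0$ of $\gamma^+$, which is finite on $[0,b)$, so $\mathbf{E}_x[\tau_X(\Lambda)]<\infty$.

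The main obstacle is the rigorous justification of $\tau_{\bar{X}}(\Lambda)=\bar{V}_{\tau^+}$ in the first step: because $\bar{V}$ has jumps (inherited from $H$) while $\bar{V}^{-1}$ is continuous with flat stretches, one must show that the exit of $\bar{X}$ is never triggered during a plateau and that no extra jump term is created at the exit instant. What makes this clean is that the exit of $\Lambda$ happens when $X^+$ reaches the interior level $b$, where no boundary local time is being accumulated; since $\gamma^+$ is continuous and frozen there, the jumps of $H$ enter only through the already-accumulated value $\gamma^+_{\tau^+}$, and the reduction holds. Once this identity is in place, parts i) and ii) follow from the elementary mean computations and the divergence $\psi\equiv+\infty$ recorded above.
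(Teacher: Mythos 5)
Your route is genuinely different from the paper's and, in substance, sound. The paper argues through the Laplace transform: it takes $\lambda\to 0$ in the explicit potentials \eqref{uLapSolZero} and \eqref{potZeroEpsilon}, so that $\mathbf{E}_x[\tau_{\bar{X}}(\Lambda)]$ is controlled by the total expected occupation of $\Lambda$ (finite in case i) thanks to the elastic kill, and divergent in case ii) through the factor $\lambda^{\alpha-1}$). You instead work pathwise, via the exact identity $\tau_{\bar{X}}(\Lambda)=\bar{V}_{\tau^+}=\tau^+ + H\circ(\eta/\sigma)\gamma^+_{\tau^+}$, and you justify correctly the delicate point (no jump of $\bar{V}$ at $\tau^+$, since $\gamma^+$ is frozen near the exit, which happens at a level away from $0$). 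This buys exact means in part i) --- indeed the paper's own Remark after the occupation-measure lemma carries out precisely your computation of $\mathbf{E}_x[\tau_a\wedge\tau_b]$ and calls it ``an alternative proof of the first statement'' --- and in part ii) it isolates the true mechanism: $\mathbf{E}[H_t]=\infty$ for every $t>0$ when $\alpha\in(0,1)$, together with $\mathbf{P}_x(\gamma^+_{\tau^+}>0)>0$. Your divergence argument is in fact more direct than the paper's, which only establishes that the total expected occupation of $\Lambda$ is infinite, an upper bound for $\mathbf{E}_x[\tau_{\bar X}(\Lambda)]$ rather than a lower one; your decomposition closes that direction cleanly. The moment computations ($\tfrac12(x-a)(b-x)$, $\tfrac12(b^2-x^2)$, $\mathbf{E}_x[\gamma^+_{\tau^+}]=b-x$) all agree with the paper's normalization ($G\varphi=\varphi''$) and with its Remark citing \cite{ChugWilliams}.

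There is, however, one genuine gap: you declare $c=0$ at the outset, while the lemma sits under the paper's standing assumption that $\eta,\sigma,c$ are \emph{positive} (the paper's own proof uses $c>0$ essentially, e.g.\ in the bound $(\sigma/c)\epsilon$). For part i) this costs nothing: no local time accrues before $\tau^+$, so the multiplicative functional equals $1$ up to the exit and $\tau_{\bar X}(\Lambda)=\tau^+$ for every $c\geq 0$. For part ii) the reduction is not harmless, because with $c>0$ the process can leave $\Lambda$ by elastic killing \emph{before} time $\bar{V}_{\tau^+}$; killing only shortens the exit time, so infiniteness of the mean for $c=0$ does not transfer to $c>0$ by any monotonicity. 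The repair is short and uses your own identity: by \eqref{repXelSticky} the kill occurs when $(c/\sigma)\gamma^+\circ\bar{V}^{-1}$ exceeds an independent standard exponential $\mathbf{e}$, and since $\gamma^+\circ\bar V^{-1}$ is frozen during the plateaus of $\bar V^{-1}$, on the event $\{\mathbf{e}>(c/\sigma)\gamma^+_{\tau^+}\}$ one still has $\tau_{\bar{X}}(\Lambda)=\bar{V}_{\tau^+}\geq H\circ(\eta/\sigma)\gamma^+_{\tau^+}$. Independence of $H$ from $(X^+,\mathbf{e})$ then gives
\begin{align*}
\mathbf{E}_x[\tau_{\bar{X}}(\Lambda)] \;\geq\; \mathbf{E}_x\Big[\psi\big((\eta/\sigma)\gamma^+_{\tau^+}\big)\, e^{-(c/\sigma)\gamma^+_{\tau^+}}\Big] \;=\; \infty,
\end{align*}
since $\psi\equiv\infty$ on $(0,\infty)$ and $\mathbf{P}_x(\gamma^+_{\tau^+}>0)>0$. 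A minor further point: in part i) you take $0<a$, but $\Lambda\subset(0,\infty)$ also allows $\Lambda=(0,b)$ (the paper's own choice $\Lambda=(0,\epsilon)$); there the exit may occur at $\{0\}$, yet $\gamma^+_{\tau^+}=0$ still holds at the first touch, so your identity and conclusion are unchanged.
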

\begin{proof}
For the solution $u$ to the problem \eqref{P3-FBVP} we observe that
\begin{align*}
\lim_{\lambda \to 0} \tilde{u}(\lambda, x) = \int_0^\infty Q^\dagger_t f(x)\, dt + \lim_{\lambda \to 0} \tilde{u}(\lambda, 0), \quad x \in [0, \infty)
\end{align*}
where 
\begin{align*}
\int_0^\infty Q^\dagger_t f(x)\, dt = \mathbf{E}_x \left[\int_0^{\tau_0} f(\bar{X}_t) dt \right] = \mathbf{E}_x \left[\int_0^{\tau_0} f(X^\dagger_t) dt \right]
\end{align*}
and $\tilde{u}(\lambda, 0)$ is given in \eqref{uLapSolZero}. We consider $\Lambda=(0, \epsilon)$ with $\epsilon>0$ and $f=\mathbf{1}_\Lambda$. From \eqref{PotSemigDir} we obtain 
\begin{align*}
\mathbf{E}_x \left[\int_0^{\tau_0} \mathbf{1}_\Lambda(X^\dagger_t) dt \right] = x\int_0^\epsilon dy + 2x \int_0^\epsilon y\,dy = x \epsilon + x \epsilon^2, \quad x \in (0, \epsilon), \quad \forall \epsilon>0.
\end{align*}
From \eqref{potZeroEpsilon} we get
\begin{align*}
\lim_{\lambda \to 0} \tilde{u}(\lambda, 0) = (\sigma/c) \epsilon, \quad \forall \epsilon>0
\end{align*}
and therefore, we conclude that
\begin{align*}
\mathbf{E}_x[\tau_{\bar{X}}(\Lambda)] = x\epsilon + x \epsilon^2 + (\sigma/c) \epsilon < \infty \quad \forall\, x \in \Lambda \subset (0, \infty).
\end{align*}
For $\Lambda = (a,b) \subset (0, \infty)$ we use the fact that $\bar{X}$ behaves like $X^\dagger$ and the invariance of $X^\dagger$ with respect to translations. Thus, we write $\mathbf{E}_{x-a}[\tau_{\bar{X}}(\Lambda^\prime)]$ where $\Lambda^\prime = (0, b-a)$. This concludes the proof of $i)$.\\

From \eqref{potZeroEpsilon} we also get, for $\Lambda=[0, \epsilon)$, 
\begin{align*}
\lim_{\lambda \to 0} \tilde{u}(\lambda, 0) = (\sigma/c) \epsilon + (\eta/c) \lim_{\lambda \to 0} \frac{\lambda^\alpha}{\lambda}, \quad \alpha\in (0,1]
\end{align*}
which is finite only in case $\alpha=1$. That is the case $\bar{X}=X$ and 
\begin{align*}
\lim_{\lambda \to 0} \tilde{u}(\lambda, 0) = (\sigma/c) \, \int_{[0,\epsilon)} m(dx).
\end{align*}
This holds $\forall \, \Lambda$ bounded such that $\Lambda \ni \{0\}$ and proves that $ii)$ holds true.
\end{proof}

\begin{remark}
Consider $\Lambda = [0, \epsilon)$ with $\epsilon>0$. We observe that, for $\tau_{X^+}(\Lambda) = \inf \{t\,:\, X^+_t \notin \Lambda\}$ and $\tau_{X}$ as above, after simple calculation,  
\begin{align*}
\mathbf{E}_x \left[ \int_0^{\tau_{X^+}(\Lambda)} \mathbf{1}_{\Lambda}(X^+_t) dt \right] = \frac{\epsilon^2 - x^2}{2}
\end{align*}
and
\begin{align*}
\mathbf{E}_x \left[ \int_0^{\tau_{X}(\Lambda)} \mathbf{1}_{\Lambda}(X_t) dt \right] = \frac{\epsilon^2 - x^2}{2} + (\eta/\sigma) (\epsilon - x) .
\end{align*}
Thus, the (average) extra time the process $X$ spends on $\{0\}$ is given by
\begin{align*}
\mathbf{E}_x \left[ \int_0^{\tau_{X}(\Lambda)} \mathbf{1}_{\Lambda}(X_t) dt \right] - \mathbf{E}_x \left[ \int_0^{\tau_{X^+}(\Lambda)} \mathbf{1}_{\Lambda}(X^+_t) dt \right]= (\eta/\sigma) (\epsilon - x), \quad x \in \Lambda.
\end{align*}
We observe that
\begin{align*}
\mathbf{E}[e_0] = \eta/\sigma
\end{align*}
and, for the local time $\gamma^+$ accumulated up to time $\tau_\epsilon = \tau_{X^+}(\Lambda)$, from \cite[Theorem 7.7]{ChugWilliams},
\begin{align*}
\mathbf{E}_x [\gamma^+ \circ \tau_\epsilon] = \epsilon - x.
\end{align*}
We recall that $\gamma^+$ is the local time at zero of the process $X^+$ started at $X^+_0=x$. Thus, the extra time above is given by the holding time of $X$ and satisfies the Wald identity 
\begin{align*}
\mathbf{E}_x[Holding\ time] = \mathbf{E}[e_0]\, \mathbf{E}_x[\gamma^+ \circ \tau_\epsilon] = \mathbf{E}[e_0]\, \mathbf{E}_x \left[ \int_0^{\tau_\epsilon} d\gamma^+_t \right], \quad x \in \Lambda.
\end{align*}
\end{remark}

\begin{lemma}
Let $\Lambda$ be an interval with $m(\Lambda)<\infty$.
\begin{itemize}
\item[i)] If $\Lambda \subset (0, \infty)$, then
\begin{align*}
\forall\, \alpha \in (0,1], \quad \forall\, x \in \Lambda, \quad \mu_{\bar{X}}(x, S) < \infty.
\end{align*}
\item[ii)] Otherwise, 
\begin{align*}
\quad \forall\, x \in \Lambda, \quad  \mu_{X}(x, S) < \infty \quad \textrm{and} \quad   \forall\, \alpha \in (0,1), \quad \forall\, x \in \Lambda, \quad \mu_{\bar{X}}(x,S) \leq \infty.
\end{align*}
\end{itemize}
\end{lemma}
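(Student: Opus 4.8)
The plan is to deduce the finiteness assertions from the mean-exit-time estimates of the preceding lemma by monotonicity of the occupation measure, and then to read off the remaining (boundary) case from the decomposition of $S$ at the point $\{0\}$. Since $S \subset \Lambda$ we have $\mathbf{1}_S \leq \mathbf{1}_\Lambda$, so for every $\alpha \in (0,1]$
\begin{align*}
\mu_{\bar{X}}(x, S) = \mathbf{E}_x\left[ \int_0^{\tau_{\bar{X}}(\Lambda)} \mathbf{1}_S(\bar{X}_s)\, ds \right] \leq \mathbf{E}_x\left[ \int_0^{\tau_{\bar{X}}(\Lambda)} \mathbf{1}_\Lambda(\bar{X}_s)\, ds \right] = \mathbf{E}_x[\tau_{\bar{X}}(\Lambda)],
\end{align*}
and likewise $\mu_{X}(x, S) \leq \mathbf{E}_x[\tau_{X}(\Lambda)]$. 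For part (i) this gives at once $\mu_{\bar{X}}(x, S) \leq \mathbf{E}_x[\tau_{\bar{X}}(\Lambda)] < \infty$ by part (i) of the preceding lemma; for the first assertion of part (ii) it gives $\mu_{X}(x, S) \leq \mathbf{E}_x[\tau_{X}(\Lambda)] < \infty$ by the finiteness of the mean exit time for $X$ (the case $\alpha=1$, i.e.\ $\bar{X}=X$) established there.

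For the last assertion of part (ii), with $0 \in \Lambda$ and $\alpha \in (0,1)$, the same monotonicity only delivers $\mu_{\bar{X}}(x, S) \leq \mathbf{E}_x[\tau_{\bar{X}}(\Lambda)] = \infty$, which is precisely the bound $\mu_{\bar{X}}(x, S) \leq \infty$ in the statement. I would then make this bound sharp in both directions by splitting $S = S_+ \sqcup S_0$ with $S_+ = S \cap (0, \infty)$ and $S_0 = S \cap \{0\}$, so that $\mu_{\bar{X}}(x, S) = \mu_{\bar{X}}(x, S_+) + \mu_{\bar{X}}(x, S_0)$. Using the representation $\bar{X} = X^+ \circ \bar{V}^{-1}$ with $\bar{V}_t = t + (\eta/\sigma)^{1/\alpha} H \circ \gamma^+_t$, the clock $\bar{V}^{-1}$ advances at unit speed exactly off the boundary, since the jumps of $H \circ \gamma^+$ are supported on $\{ t : X^+_t = 0\}$; hence the occupation of $S_+$ by $\bar{X}$ equals the occupation of $S_+$ by the reflecting Brownian motion $X^+$ up to the corresponding exit time, which is finite because $\Lambda$ is bounded. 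Concretely, taking $c=0$ and $f = \mathbf{1}_{S_+}$ in \eqref{uLapSolZero} annihilates the boundary term $\propto \lambda^\alpha/\lambda$ (as $f(0)=0$) and leaves a finite limit as $\lambda \to 0$, in the spirit of \eqref{proofV2}. On the other hand $\mu_{\bar{X}}(x, S_0)$ is the total time $\bar{X}$ spends at $\{0\}$ before leaving $\Lambda$; by the occupation theorem this is governed by $\bar{\mathcal{V}}^{-1}$, whose transform \eqref{proofV1} carries the factor $\lambda^\alpha/\lambda$ that diverges as $\lambda \to 0$ for $\alpha \in (0,1)$, reflecting the Mittag-Leffler holding times $\bar{e}_i$ at the origin with $\mathbf{E}[\bar{e}_i] = \infty$. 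Thus $\mu_{\bar{X}}(x, S_0) = \infty$ whenever $0 \in S$ and $\mu_{\bar{X}}(x, S) < \infty$ exactly when $0 \notin S$; in all cases $\mu_{\bar{X}}(x, S) \leq \infty$.

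The main obstacle I anticipate is to justify rigorously that $\mu_{\bar{X}}(x, S_+)$ stays finite even though, before leaving $\Lambda$, the process returns to the boundary infinitely often with increasingly long (infinite-mean) sticky delays. The point to be argued carefully is that these delays are recorded only by the jump part of $\bar{V}$ and therefore contribute to $S_0$, not to $S_+$: the set $\{ s : \bar{X}_s \in (0,\infty)\}$ is the image under $\bar{V}$ of the continuity (unit-speed) set of $\bar{V}$, so the interior occupation is exactly the finite reflecting-Brownian occupation and is not inflated by the stickiness. Turning this time-change bookkeeping into a clean identity $\mu_{\bar{X}}(x, S_+) = \mu_{X^+}(x, S_+)$, valid up to the first exit from a bounded $\Lambda$ containing $0$, is the technical heart of the argument.
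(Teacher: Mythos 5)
Your proof is correct, but it takes a genuinely different route from the paper's. You derive everything from the preceding mean-exit-time lemma via monotonicity: $\mathbf{1}_S \leq \mathbf{1}_\Lambda$ gives $\mu_{\bar{X}}(x,S) \leq \mathbf{E}_x[\tau_{\bar{X}}(\Lambda)]$ (and likewise for $X$), which settles part i), the first assertion of ii), and — since the second assertion of ii) is the vacuous bound $\mu_{\bar{X}}(x,S) \leq \infty$ — the whole statement; this is shorter and makes the logical dependence on the previous lemma explicit. The paper instead argues directly from the resolvent: it bounds $\mu_{\bar{X}}(x,S)$ by the occupation over the \emph{whole} (elastically killed, $c>0$) lifetime, reads off from \eqref{uLapSolZero} the estimate $R_\lambda \mathbf{1}_S(x) \leq \sigma\, m(S)/(c+\eta \lambda^\alpha + \sigma \sqrt{\lambda})$, and lets $\lambda \to 0$ to get the quantitative bound $(\sigma/c)\, m(S)$, uniform in $x$ and sharp in $S$ — information your bound by $\mathbf{E}_x[\tau_{\bar{X}}(\Lambda)]$ discards; for ii) it exhibits via \eqref{potZeroEpsilon} the diverging term $\frac{\lambda^\alpha}{\lambda}\frac{\eta}{c+\eta\lambda^\alpha+\sigma\sqrt{\lambda}}$, showing the occupation can actually be infinite when $\alpha<1$ and $0 \in \Lambda$. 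Your additional dichotomy — splitting $S = S_+ \sqcup S_0$, with $\mu_{\bar{X}}(x,S_0) = \infty$ when $0 \in S$ and $\mu_{\bar{X}}(x,S_+) < \infty$ — goes beyond what the statement asks and is sound in substance; in fact the "technical heart" you flag (that $\bar{V}^{-1}$ runs at unit speed off the boundary, so interior occupation is not inflated by the plateaus) is precisely what the paper's remark following the lemma records: for $c=0$ and $\Lambda \subset (0,\infty)$ one has $\bar{V}^{-1}_t = t$ for $t < \tau_0$, whence $\tau_{\bar{X}}(\Lambda) = \tau_X(\Lambda) = \tau_{X^+}(\Lambda)$ and the interior occupation reduces to that of $X^\dagger$. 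One point you should add if you keep the sharpening for $c>0$: the elastic functional $\exp\left(-c/\sigma\, \gamma^+ \circ \bar{V}^{-1}_t\right)$ is constant along the plateaus of $\bar{V}^{-1}$, so the elastic kill does not truncate the Mittag-Leffler holding times and the infinite-mean argument for $S_0$ survives the killing.
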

\begin{proof}
$\forall x \in \Lambda$ with $\Lambda$ a bounded subsect of $(0, \infty)$, $S \subset \Lambda$,  
\begin{align*}
\mu_{X}(x, S) 
= & \mathbf{E}_x\left[ \int_0^{\tau_X(\Lambda)} \mathbf{1}_S (X_s)ds \right] \\
\leq & \mathbf{E}_x\left[ \int_0^\infty \mathbf{1}_S (X_s)ds \right] \\
= & \lim_{\lambda \to 0} \mathbf{E}_x\left[ \int_0^\infty e^{-\lambda t} \mathbf{1}_S (X_s)ds \right]
\end{align*}
where (see formula \eqref{uLapSolZero})
\begin{align*}
\mathbf{E}_x\left[ \int_0^\infty e^{-\lambda t} \mathbf{1}_S (X_s)ds \right] 
= : & R_\lambda \mathbf{1}_S (x)\\  
\leq & \frac{\sigma \, m(S)}{c + \eta \lambda + \sigma \sqrt{\lambda}} \to (\sigma / c) m(S)< \infty \quad \textrm{as} \quad \lambda \to 0
\end{align*}
where $m(S)\leq m(\Lambda)$. With the same arguments we obtain
\begin{align*}
\mu_{\bar{X}}(x, S) 
= & \mathbf{E}_x\left[ \int_0^{\tau_{\bar{X}}(\Lambda)} \mathbf{1}_S (\bar{X}_s)ds \right]\\ 
\leq & \frac{\sigma m(S)}{c + \eta \lambda^\alpha + \sigma \sqrt{\lambda}} \to (\sigma / c) m(S) < \infty \quad \textrm{as} \quad \lambda \to 0.
\end{align*}
Both limits above are obtained uniformly in $x \in [0, \infty)$ and this concludes the proof of $i)$.\\

For $\Lambda \ni \{0\}$, for example $\Lambda=[0, \epsilon)$ with $\epsilon>0$, we have that
\begin{align*}
\mu_{\bar{X}}(x, \Lambda) \leq \mathbf{E}_x\left[ \int_0^\infty \mathbf{1}_\Lambda (\bar{X}_s)ds \right] = \lim_{\lambda \to 0} \mathbf{E}_x\left[ \int_0^\infty e^{-\lambda t} \mathbf{1}_\Lambda (\bar{X}_s)ds \right]
\end{align*}
where (see formula \eqref{probIntZero})
\begin{align*}
\mathbf{E}_x\left[ \int_0^\infty e^{-\lambda t} \mathbf{1}_\Lambda (\bar{X}_s)ds \right] 
= \frac{\sigma}{c + \lambda^\alpha \eta + \sigma \sqrt{\lambda}} \frac{1 - e^{-\epsilon \sqrt{\lambda}}}{\sqrt{\lambda}} + \frac{\lambda^\alpha}{\lambda} \frac{\eta}{c + \eta \lambda^\alpha + \sigma \sqrt{\lambda}}
\end{align*}
and
\begin{align*}
\lim_{\lambda \to 0} \mathbf{E}_x\left[ \int_0^\infty e^{-\lambda t} \mathbf{1}_\Lambda (\bar{X}_s)ds \right] = \lim_{\lambda \to 0} \frac{\lambda^\alpha}{\lambda} \frac{\eta}{c + \eta \lambda^\alpha + \sigma \sqrt{\lambda}} = \infty
\end{align*}
only in case $\alpha<1$ concluding the proof.
\end{proof}

\begin{remark}
We notice that
\begin{align*}
\forall\, S \subset \Lambda, \quad \mathbf{E}_x \left[ \int_0^{\tau_{\bar{X}}(\Lambda)} \mathbf{1}_{S}(\bar{X}_s) ds \right] \leq \mathbf{E}_x \left[ \int_0^{\tau_{\bar{X}}(\Lambda)} \mathbf{1}_{\Lambda}(\bar{X}_s) ds \right].
\end{align*}
The processes $X$ and $\bar{X}$ move along the path of $X^+$ according with their clocks $V$ and $\bar{V}$. Assume $c=0$. By definition $\bar{V}^{-1}_t \leq t$ and $V^{-1}_t \leq t$ a.s. and $\forall \, x \in \Lambda \subset (0, \infty)$,  $\bar{V}^{-1}_t = t$ and $V^{-1}_t=t$ for $t< \tau_0$, thus  
\begin{align*}
\tau_{\bar{X}}(\Lambda) = \tau_X(\Lambda) = \inf\{t\,: X^+_t \notin \Lambda \}.
\end{align*}  
Consider $\Lambda=(a,b) \ni x=X^+_0$ with $b > a > 0$ and write 
\begin{align*}
\tau_{X}(\Lambda) = \inf\{t\,: X^+_t < a \} \wedge \inf\{t\,: X^+_t >b \}.
\end{align*}
Since 
\begin{align*}
\inf\{t\,: X^+_t < a \}=\inf\{t\,: X^\dagger_t = a \}=: \tau_a
\end{align*}
and 
\begin{align*}
\inf\{t\,: X^+_t >b \}=\inf\{t\,: X^\dagger_t  = b  \}=: \tau_b,
\end{align*}
then we get
\begin{align*}
\mu_{\bar{X}}(x, \Lambda) 
= & \mathbf{E}_x \left[ \int_0^{\tau_a \wedge \tau_b} \mathbf{1}_{(a,b)} (X^\dagger_t) dt \right]\\ 
= & \mathbf{E}_x [\tau_a \wedge \tau_b] \\
= & -\frac{(x-a)^2}{2} + \frac{(b-a)(x-a)}{2}.
\end{align*}
This provides an alternative proof of the first statement of the previous Lemma.
\end{remark}

\subsection{On the holding times of $\bar{X}$}

We say that $\{e_i\}_{i}$ is a sequence of exponential holding times for $X$ meaning that, for $X_0=0$, 
\begin{align*}
\mathbf{P}_0(e_i >t | X_{e_i} >0) = e^{-(\sigma/\eta) t}.
\end{align*}
Despite the fact that $X$ is not strong Markov, the exponential law is directly related with the fact that $X$ is Markov. Let $\{\bar{e}_i\}_i$ be the sequence of holding time for $\bar{X}$. The process $\bar{X}$ is defined via time-change along the path of $X^+$. We have instantaneous reflection at zero for $X^+$ and a delayed reflection for $\bar{X}$. Indeed, the non-local boundary condition introduces a delaying effect due to the process $L$.

A sequence of holding times (at the boundary point) for $\bar{X}$ is a sequence of random variables, say $\{\bar{e}_i\}_i$, with distribution given by
\begin{align*}
\forall\, i, \quad \mathbf{P}_0(\bar{e}_i > t | \bar{X}_{\bar{e}_i}>0), \quad t>0.
\end{align*}
Thus the process started from $\bar{X}_0=0$ leaves the boundary point after an holding time $\bar{e}_i$.  We have that $\bar{X}$ moves along the path of $X^+$ which exhibits instantaneous reflection. The time change given by $\bar{V}_t = t + H \circ (\eta/\sigma) \gamma^+_t$ is the clock with the extra time $H \circ (\eta/\sigma) \gamma^+_t$ depending on the time the process $X^+$ spend at zero up to time $t$. In particular $\bar{V}$ plays the same role of $V$ for which the extra time $(\eta/\sigma)\gamma^+_t$ leads to the holding time $e_i$, $i \in \mathbb{N}$ for $X$.

\begin{theorem}
\label{thm:holdingTime}
The sequence $\{\bar{e}_i\}_{i}$ of holding times for $\bar{X}$ is given by i.i.d. Mittag-Leffler random variables. 
\end{theorem}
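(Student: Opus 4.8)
The plan is to identify each holding time $\bar{e}_i$ of $\bar{X}$ as the value of the stable subordinator $H$ evaluated at the corresponding exponential holding time $e_i$ of $X$, and then to read off the Mittag-Leffler law from the Laplace transform of the inverse subordinator $L$. This is exactly the object $H\circ e_0$ already anticipated in the Remark following the previous Lemma.

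First I would recall the excursion structure underlying the sticky process. The process $\bar{X}$ runs along the path of $X^+$ with the clock $\bar{V}_t = t + H\circ(\eta/\sigma)\gamma^+_t$, so in the local-time parametrization $\ell \mapsto \gamma^{+,-1}_\ell$ the time spent at $\{0\}$ is governed by $H\circ(\eta/\sigma)\ell$ rather than by the linear drift $(\eta/\sigma)\ell$ that produces $X$. A single visit of $X$ to the boundary consumes an amount of local time $\delta\ell_i$ with $(\eta/\sigma)\delta\ell_i = e_i$, and by \eqref{lawHoldingTimeX} the $e_i$ are i.i.d. with $\mathbf{P}_0(e_i>t\,|\,X_{e_i}>0)=e^{-(\sigma/\eta)t}$. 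The corresponding sojourn of $\bar{X}$ is then the increment of $H$ across that same local-time interval. Since $H$ has stationary independent increments and is independent of $X^+$, these increments are i.i.d. and each is distributed as $H_{e_i}$; hence $\{\bar{e}_i\}_i$ is i.i.d., and it remains only to compute the common law of $H\circ e$ with $e\sim\mathrm{Exp}(\sigma/\eta)$.

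For this last step I would condition on $e$ and use the defining relation $\mathbf{P}_0(H_s>t)=\mathbf{P}_0(L_t<s)$ together with the Laplace transform \eqref{LapML}, namely $\mathbf{E}_0[e^{-\xi L_t}]=E_\alpha(-\xi t^\alpha)$. Integrating by parts (the boundary terms vanish since $\mathbf{P}_0(L_t<0)=0$),
\begin{align*}
\mathbf{P}(H_e > t) = \int_0^\infty \mathbf{P}_0(L_t < s)\,(\sigma/\eta)\, e^{-(\sigma/\eta)s}\,ds = \int_0^\infty e^{-(\sigma/\eta)s}\,\mathbf{P}_0(L_t\in ds) = E_\alpha(-(\sigma/\eta)\, t^\alpha),
\end{align*}
which is exactly the announced holding-time law. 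For $\alpha=1$ one has $H_t=t$ and this collapses to $e^{-(\sigma/\eta)t}$, recovering the exponential holding time of $X$, while for $\alpha\in(0,1)$ the $t^{-\alpha}$ tail of $E_\alpha$ forces $\mathbf{E}[\bar{e}_i]=\infty$, consistently with $E_\alpha\notin L^1(0,\infty)$.

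The main obstacle is the rigorous justification of the identification $\bar{e}_i \stackrel{d}{=} H\circ e_i$, that is, that the delay introduced by $H$ acts independently and stationarily on each individual holding period. This is precisely where one must pass from the global time change $\bar{V}$ to an excursion-by-excursion description: one has to argue that the boundary set of $\bar{X}$ decomposes, along the local time of $X^+$, into the same sequence of sitting intervals as for $X$, and that $H$ reparametrizes each such interval by an independent increment. Once this decomposition is in place (using that $\gamma^+$ increases only on $\{t:X^+_t=0\}$ and that $H$ is independent of $X^+$), both the i.i.d. property and the Mittag-Leffler marginal follow from the computation above.
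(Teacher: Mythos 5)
Your proposal is correct and follows essentially the same route as the paper: both identify $\bar{e}_i$ as the increment $H\circ e_i$ of the subordinator over the local-time interval of the $i$-th visit (the paper does this by comparing $V_t-t=(\eta/\sigma)\gamma^+_t=e_0$ with $\bar{V}_t-t=H\circ(\eta/\sigma)\gamma^+_t$ on $[0,\tau_1)$), and both derive independence from the stationary independent increments of $H$ and its independence from $X^+$. The only cosmetic difference is the final computation: you obtain $\mathbf{P}(H_e>t)=E_\alpha(-(\sigma/\eta)t^\alpha)$ directly via the duality $\mathbf{P}_0(H_s>t)=\mathbf{P}_0(L_t<s)$ and \eqref{LapML}, whereas the paper computes $\mathbf{E}_0[e^{-\lambda H\circ e_0}]=(\sigma/\eta)/((\sigma/\eta)+\lambda^\alpha)$ and inverts the Laplace transform --- equivalent one-line variants of the same step, and you even flag honestly the same excursion-by-excursion identification that the paper itself treats at a heuristic level.
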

\begin{proof}
By definition, $\bar{e}_0$  is such that $\bar{X}_t= 0$ for $0\leq t < \bar{e}_0$ if $\bar{X}_0=0$. Focus on the case $\alpha=1$. Assume $X_0=0$, then $X_t = 0$ for $0\leq t < e_0$ by definition of holding time $e_0$ for $X$. The process $V_t=t+(\eta/\sigma) \gamma^+_t$ has continuous paths as well as the inverse process $V^{-1}_t$. As $\eta=0$ we get $V_t=t=V^{-1}_t$ and the process $X=X^+$ has instantaneous reflection. For $\eta>0$, the extra time $(\eta/\sigma) \gamma^+_t$ of $V_t$ introduces the holding time $e_0$ via time change with $V^{-1}$. In particular, 
\begin{align*}
\forall\, t\geq 0, \;  V_t - t \geq 0 \quad \textrm{and} \quad V_t - t > 0 \quad \textrm{if} \quad 0 < t < e_0 
\end{align*}
and 
\begin{align*}
\mathbf{P}_0(0 < V_t - t < e_0) = \mathbf{P}_0((\eta/\sigma) \gamma^+_t < e_0).
\end{align*}
With $\tau_0=0$, let $\tau_i = \inf\{t > \tau_{i-1}\,:\, X_t = 0 \}$, $i \in \mathbb{N}$ be the sequence of return times at zero for $X$ and define
\begin{align*}
N_t = \max\{i\,:\, \tau_i < t \}.
\end{align*}
The time $t$ is a clock for $X^+$ as well as the time $V_t$ is a clock for $\bar{X}$. The difference $V_t - t$ is given by the amount of time the process $\bar{X}$ has been stopped at zero up to time $t$. That is,
\begin{align*}
V_t - t = \sum_{i=0}^{N_t} e_i = \sum_{i\geq 0} e_i \, \mathbf{1}_{(i \leq N_t)} = \sum_{i\geq 0} e_i \, \mathbf{1}_{(\tau_i < t)}
\end{align*}
and the first difference $e_0$ between the two clocks is obtained at the first instantaneous visit of $X^+$ at zero. Moreover, we can write
\begin{align*}
\bar{V}_t - t = \sum_{i\geq 0} \bar{e}_i \, \mathbf{1}_{(\bar{\tau}_i < t)}
\end{align*}
which gives $\bar{e}_0$ according with the previous case. Notice that $\bar{X}_0=0$ and $\bar{\tau}_0 = 0$, $X$ and $\bar{X}$ move along the path of $X^+$ and $\bar{X} = X$ for $\alpha=1$. Now we observe that
\begin{align*}
V_t - t = (\eta/\sigma) \gamma^+_t = e_0, \quad 0 \leq t < \tau_1 
\end{align*}
and therefore
\begin{align*}
\bar{V}_t - t = H \circ (\eta/\sigma) \gamma^+_t = \bar{e}_0, \quad 0 \leq t < \tau_1  \quad \Rightarrow \quad \bar{V}_t - t = H \circ e_0, \quad 0 \leq t < \tau_1,
\end{align*}
that is, $\bar{e}_0 = H \circ e_0$. Observe that $\tau_1$ is the excursion time on $(0, \infty)$ for a Brownian motion, thus both processes $V_t - t$ and $\bar{V}_t -t$ run over $[0, \tau_1)$ respectively taking the values $e_0$ and $\bar{e}_0$. Since $e_0$ is and exponential r.v. we conclude that
\begin{align*}
\mathbf{E}_0[e^{-\lambda H\circ e_0}] = \int_0^\infty (\sigma/\eta) e^{-(\sigma/\eta) s} e^{- \lambda^\alpha s} ds = \frac{(\sigma/\eta)}{(\sigma/\eta) + \lambda^\alpha}
\end{align*} 
and $H\circ e_0$ has a Mittag-Leffler distribution. Indeed, 
\begin{align*}
\int_0^\infty e^{-\lambda t} E_\alpha(-(\sigma/\eta) t^\alpha) dt = \frac{\lambda^\alpha}{\lambda} \frac{1}{(\sigma/\eta) + \lambda^\alpha}, \quad \lambda >0
\end{align*}
and
\begin{align*}
\mathbf{P}_0(H\circ e_0 > t) = E_\alpha(-(\sigma/\eta) t^\alpha).
\end{align*} 
Since $e_i \sim e_0$ are i.i.d. random variables, then $H\circ e_i$ are i.i.d. random variables. Indeed, $H$ is independent from $X$. Moreover, $H$ is a Markov process for which
\begin{align*}
H \circ e_0 \perp H \circ e_{1} = H\circ (e_0+e_1) - H \circ e_0 
\end{align*}
as well as $H \circ e_i \perp H \circ e_k$ $\forall\, i\neq k$. Thus, we obtain the claim.
\end{proof}

\begin{corollary}
The process $\bar{X}$ spends an infinite average amount of time at $\{0\}$ with each visit.
\end{corollary}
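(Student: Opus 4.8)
The plan is to identify the phrase ``average amount of time at $\{0\}$ with each visit'' with the expectation of a single holding time $\bar{e}_i$, and then to show that this expectation is infinite by integrating the survival function supplied by Theorem~\ref{thm:holdingTime}. First I would record that each $\bar{e}_i$ is a Mittag-Leffler random variable with
\begin{align*}
\mathbf{P}_0(\bar{e}_i > t \mid \bar{X}_{\bar{e}_i} > 0) = E_\alpha(-(\sigma/\eta) t^\alpha), \quad t \geq 0,
\end{align*}
so that, by the tail formula for the mean of a nonnegative random variable, the average time spent at the boundary during one visit (before the process leaves $\{0\}$) is
\begin{align*}
\mathbf{E}_0[\bar{e}_i \mid \bar{X}_{\bar{e}_i} > 0] = \int_0^\infty \mathbf{P}_0(\bar{e}_i > t \mid \bar{X}_{\bar{e}_i} > 0)\, dt = \int_0^\infty E_\alpha(-(\sigma/\eta) t^\alpha)\, dt.
\end{align*}

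The core step is to show that this integral diverges for $\alpha \in (0,1)$. Since $E_\alpha(-\mu t^\alpha) \geq 0$ (the Mittag-Leffler function with negative argument is completely monotone), I would evaluate the integral as a limit of Laplace transforms and pass to the limit by monotone convergence:
\begin{align*}
\int_0^\infty E_\alpha(-(\sigma/\eta) t^\alpha)\, dt = \lim_{\lambda \downarrow 0} \int_0^\infty e^{-\lambda t} E_\alpha(-(\sigma/\eta) t^\alpha)\, dt = \lim_{\lambda \downarrow 0} \frac{\lambda^{\alpha - 1}}{\lambda^\alpha + (\sigma/\eta)},
\end{align*}
where the last equality is \eqref{LapML} with $\xi = \sigma/\eta$. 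For $\alpha \in (0,1)$ the exponent $\alpha - 1 < 0$ forces $\lambda^{\alpha-1} \to \infty$ while the denominator tends to $\sigma/\eta$, so the limit is $+\infty$; this is exactly the fact, already recorded in the preliminaries, that $E_\alpha \notin L^1(0, \infty)$. Hence $\mathbf{E}_0[\bar{e}_i] = \infty$ for every $i$, which is the claim. For comparison, the same computation with $\alpha = 1$ returns $\lim_{\lambda \downarrow 0} (\lambda^{0})/(\lambda + \sigma/\eta) = \eta/\sigma$, recovering the finite exponential mean $\mathbf{E}[e_i] = \eta/\sigma$ for $X$ and confirming that the divergence is a genuinely fractional ($\alpha<1$) phenomenon.

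I expect the only delicate point to be justifying the passage to the limit. One must not rely on the upper bound \eqref{MLbound} alone, since an integrable-looking majorant cannot establish divergence; it is the nonnegativity of $E_\alpha(-\cdot)$ together with monotone convergence that makes the Laplace-transform evaluation rigorous. Alternatively, one could invoke the lower tail estimate arising from the asymptotics $E_\alpha(-\mu t^\alpha) \sim (\mu t^\alpha \Gamma(1-\alpha))^{-1}$ as $t \to \infty$, which leaves a non-integrable $t^{-\alpha}$ tail and yields the divergence directly.
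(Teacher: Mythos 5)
Your proof is correct and follows essentially the same route as the paper, which likewise writes $\mathbf{E}[\bar{e}_i] = \int_0^\infty E_\alpha(-(\sigma/\eta)t^\alpha)\,dt$ and concludes from $E_\alpha \notin L^1(0,\infty)$ (a fact the paper simply cites from the preliminaries). Your only addition is to justify that non-integrability explicitly, via monotone convergence on the Laplace transform \eqref{LapML} together with nonnegativity of $E_\alpha(-\cdot)$, which is a sound and slightly more self-contained rendering of the same argument.
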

\begin{proof}
It follows from the fact that $E_\alpha \notin L^1(0, \infty)$ and
\begin{align*}
\forall\, i \in \mathbb{N}_0, \quad \mathbf{E}[\bar{e}_i] = \int_0^\infty E_\alpha (-(\sigma/\eta)t^\alpha) dt.
\end{align*}
\end{proof}

\subsection{On the lifetime of $\bar{X}$}

We provide some connection between the results obtained in \cite{Dov22} and the results obtained in the present paper. Let us introduce 
\begin{align*}
\bar{H}_t = \frac{\sigma}{\eta} t + H^\beta_t, \; t\geq 0 \quad \textrm{with inverse} \quad \bar{L}_t, \; t \geq 0
\end{align*}
where $H^\beta = \{H^\beta_t\}_{t\geq 0}$ is a $\beta$-stable subordinator with inverse $L^\beta = \{L^\beta_t\}_{t\geq 0}$. We still denote by $H$ and $L$ the $\alpha$-stable subordinator and its inverse. Let $\chi$ be an exponential random variable such that $\mathbf{P}(\chi >x)= e^{-cx}$, $x>0$. We also introduce $\chi(\beta)$ which is a Mittag-Leffler random variable with $\mathbf{P}(\chi(\beta) > x) = E_\beta(-c x^\beta)$, $c>0$. It is well-know that $\chi(1)=\chi$.

In \cite{Dov22} the FBVP \eqref{FBVP-realLine} has been investigated for $\alpha=\beta/2 \in (0, 1/2]$ and an alternative probabilistic representation has been considered. Such a representation has been related to the fractional telegraph process. Let us consider the lifetime 
\begin{align*}
\zeta \stackrel{law}{=} \inf\{t\geq 0\,:\, \gamma^+_t < \chi(\beta) \} \quad \textrm{with } \beta/2 = \alpha\leq 1/2.
\end{align*}
For $f=\mathbf{1}$ and $\sigma=0$, from \cite{Dov22} we known that  
\begin{align*}
u(t,x) = \mathbf{P}_x(\zeta >t) = \mathbf{E}_x \left[ \exp \left( - (c/\eta) L^{\beta} \circ \gamma^+_t \right) \right] = \mathbf{E}_x \left[ E_{\beta} \big( - (c/\eta) (\gamma^+_t)^{\beta} \big) \right]
\end{align*}
which is the case of the elastic sticky fractional condition. Moreover, the special case $\alpha=1/2$ gives
\begin{align*}
u(t,x) = \mathbf{E}_x\left[\exp \left( - \frac{c}{\eta + \sigma} \gamma^+_t \right) \right]
\end{align*}
which is the case of the elastic condition. Thus, for $\alpha=1/2$ and $\sigma=0$, we have that
\begin{align*}
\zeta \stackrel{law}{=} \inf\{t\geq 0\,:\, \gamma^+_t < \chi \}
\end{align*}
coincides with the lifetime of an elastic Brownian motion. This relates the Neumann boundary condition to the fractional dynamic boundary condition with $\alpha=1/2$. 

\begin{remark}
Focus on the case $\sigma=0$. We have
\begin{align*}
u(t,x) 
= & \mathbf{E}_x \left[ \exp \left( -(c/\eta)\, L^{\beta} \circ \gamma^+_t \right) \right]\\ 
= & \mathbf{E}_x \left[ E_{\beta} \big( - (c/\eta) (\gamma^+_t)^{\beta} \big) \right]\\ 
= & \int_0^\infty E_{\beta}(-(c/\eta) s^{\beta}) \mathbf{P}_x(\gamma^+_t \in ds)
\end{align*}
for which
\begin{align*}
\frac{\partial u}{\partial t}(t,x) 
= & \int_0^\infty E_{\beta}(- (c/\eta) z^{\beta}) \frac{\partial^2}{\partial z^2} \frac{e^{-(x+z)^2/4t}}{\sqrt{4\pi t}} dz \\
= & -E_\beta(-(c/\eta) z^\beta) \frac{(x+z)}{2t} \frac{e^{-(x+z)^2/4t}}{\sqrt{4\pi t}} \bigg|_0^\infty \\
& + \int_0^\infty \frac{\partial}{\partial z} E_\beta(- (c/\eta) z^\beta) \frac{(x+z)}{2t} \frac{e^{-(x+z)^2/4t}}{\sqrt{4\pi t}} dz\\
= & \frac{x}{2t} \frac{e^{-x^2/4t}}{\sqrt{4\pi t}} +  \int_0^\infty \frac{\partial}{\partial z} E_\beta(- (c/\eta) z^\beta) \frac{(x+z)}{2t} \frac{e^{-(x+z)^2/4t}}{\sqrt{4\pi t}} dz.
\end{align*}
Thus,
\begin{align*}
\bigg| \frac{\partial u}{\partial t}(t,0) \bigg|
= & \bigg| \int_0^\infty \frac{\partial}{\partial z} E_\beta(-(c/\eta) z^\beta) \frac{z}{2t} \frac{e^{-z^2/4t}}{\sqrt{4\pi t}} dz \bigg| \\
\leq & \int_0^\infty \bigg|\frac{\partial}{\partial z} E_\beta(- (c/\eta) z^\beta) \bigg| \frac{z}{2t} \frac{e^{-z^2/4t}}{\sqrt{4\pi t}} dz.
\end{align*}
According with \cite[equation (17)]{Kra03}, we have that, for $\mathtt{C}>0$,
\begin{align*}
\bigg| \frac{\partial u}{\partial t}(t,0) \bigg|
\leq & (c/\eta) \int_0^\infty \mathtt{C}\, z^{\beta -1} \frac{z}{2t} \frac{e^{-z^2/4t}}{\sqrt{4\pi t}} dz\\
= & [z=\sqrt{y}] = (c/\eta)\, \mathtt{C} \frac{1}{2} \frac{1}{2t} \frac{1}{\sqrt{4\pi t}} \int_0^\infty y^{\beta/2 + 1/2 -1} e^{-y/4t} dy \\
= & (c/\eta)\,\mathtt{C} \frac{1}{2} \frac{1}{2t} \frac{1}{\sqrt{4\pi t}} (4t)^{\beta/2 + 1/2} \Gamma(\beta/2 + 1/2)\\
= & (c/\eta)\, \mathtt{C} \frac{1}{2t} \frac{1}{\sqrt{4\pi t}} (4t)^{\beta/2 + 1/2} 2^{1-\beta} \frac{\Gamma(\beta)}{\Gamma(\beta/2)} \sqrt{\pi}\\
= & (c/\eta)\, \mathtt{C}\, 2^{- \beta} \frac{\Gamma(\beta)}{\Gamma(\beta/2)} t^{\beta/2 -1}
\end{align*}
with $\beta/2 \in (0, 1/2]$. This agrees with the formula \eqref{estMmodulus} which writes
\begin{align}
\bigg| \frac{\partial u}{\partial t}(t, 0) \bigg| \leq \mathcal{C}\, \|f\|_\infty\, \frac{\sqrt{(c/\eta)}}{2}  t^{\alpha/2-1}, \quad t>0.
\end{align}

\end{remark}

Let us denote by $\bar{\zeta}$ the lifetime of $\bar{X}$. For the process $\bar{V}_t = t + H \circ (\eta / \sigma) \gamma^+_t$ defined as above we provide the following result.
\begin{theorem}
For $\alpha= \beta/2 \in (0, 1/2)$, 
\begin{align}
\mathbf{P}_x(\bar{\zeta} >t) = \mathbf{P}_x(\chi > \bar{L} \circ \gamma^+_t) = \mathbf{P}_x(\chi > \gamma^+ \circ \bar{V}^{-1}_t), \quad t>0, \quad x \geq 0
\end{align}
and
\begin{align}
\bar{\zeta} \stackrel{law}{=} \inf\{t \geq 0\,:\, \gamma^+_t < \bar{H}_\chi \}.
\end{align}
For $\alpha \in (0,1)$
\begin{align}
\label{lifeBarAllalpha}
\mathbf{P}_x(\bar{\zeta} >t) = \mathbf{P}_0(H^{1/2}_x + H^{1/2}_\chi + (\eta/\sigma)^{1/\alpha} H^\alpha_\chi > t), \quad t>0, \quad x \geq 0
\end{align}
where $H^{1/2}_x$, $H^{1/2}_\chi$, $H^\alpha_\chi$, $\bar{X}$ are independent.
\end{theorem}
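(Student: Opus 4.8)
The plan is to read off all three representations from the single Laplace transform provided by Theorem~\ref{thm:FBVP-realLine}. Since the elastic kill makes $u(t,x)=\mathbf{E}_x[f(\bar{X}_t)]$ reduce, for the constant datum $f\equiv 1$, to the survival probability $\mathbf{P}_x(\bar{\zeta}>t)$, I would first specialize the displayed potential in Theorem~\ref{thm:FBVP-realLine} to $f\equiv 1$. Using $\int_0^\infty e^{-y\sqrt\lambda}\,dy=1/\sqrt\lambda$, $f(0)=1$ and $R^\dagger_\lambda\mathbf{1}(x)=(1-e^{-x\sqrt\lambda})/\lambda$ (cf.\ \eqref{PotSemigDir}), a short simplification collapses the two terms into $\tilde u(\lambda,x)=\tfrac1\lambda\big(1-\tfrac{c\,e^{-x\sqrt\lambda}}{c+\eta\lambda^\alpha+\sigma\sqrt\lambda}\big)$. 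Comparing with $\int_0^\infty e^{-\lambda t}\mathbf{P}_x(\bar\zeta>t)\,dt=\tfrac1\lambda(1-\mathbf{E}_x[e^{-\lambda\bar\zeta}])$ yields the master identity $\mathbf{E}_x[e^{-\lambda\bar\zeta}]=\tfrac{c\,e^{-x\sqrt\lambda}}{c+\eta\lambda^\alpha+\sigma\sqrt\lambda}$, which I will treat as the engine driving everything that follows.

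Next I would establish \eqref{lifeBarAllalpha} for all $\alpha\in(0,1)$ by factoring this master transform. The factor $e^{-x\sqrt\lambda}=\mathbf{E}_x[e^{-\lambda\tau_0}]$ is the transform of the first hitting time of $\{0\}$, and $\tau_0\stackrel{law}{=}H^{1/2}_x$ (a $\tfrac12$-stable subordinator at the deterministic time $x$). The remaining factor is $\tfrac{c}{c+\eta\lambda^\alpha+\sigma\sqrt\lambda}=\mathbf{E}[\exp(-\chi(\sigma\sqrt\lambda+\eta\lambda^\alpha))]$ for an independent exponential $\chi$, which is exactly the transform, after conditioning on the common $\chi$, of $H^{1/2}_{\sigma\chi}+H^{\alpha}_{\eta\chi}$; by stable self-similarity $H^\kappa_{cs}\stackrel{law}{=}c^{1/\kappa}H^\kappa_s$ this agrees in law with $H^{1/2}_{\chi}+(\eta/\sigma)^{1/\alpha}H^\alpha_\chi$. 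Because $\bar X=X$ enjoys the Markov property up to $\bar\tau_0=\tau_0$ (as noted in Theorem~\ref{thm:FBVP-realLine}), the hitting time $\tau_0$ is independent of the post-hitting dynamics, so injectivity of the Laplace transform gives $\bar\zeta\stackrel{law}{=}H^{1/2}_x+H^{1/2}_\chi+(\eta/\sigma)^{1/\alpha}H^\alpha_\chi$, i.e.\ \eqref{lifeBarAllalpha}.

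For the two identities in the case $\alpha=\beta/2$ I would work from $x=0$ and interpret the kill dynamically: $\bar\zeta$ is the first instant the boundary local time $\gamma^+\circ\bar V^{-1}_t$ of $\bar X$ reaches the exponential threshold, so reading the clock at the corresponding $X^+$-time $\gamma^{+,-1}_\chi$ gives $\bar\zeta=\bar V\circ\gamma^{+,-1}_\chi=\gamma^{+,-1}_\chi+H_{(\eta/\sigma)\chi}$, the $x=0$ instance of the previous paragraph. The right-hand equality $\mathbf{P}_x(\bar\zeta>t)=\mathbf{P}_x(\chi>\gamma^+\circ\bar V^{-1}_t)$ is then the elastic-kill identity after integrating out the independent threshold. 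For the middle equality I would identify the two random clocks in law: $\gamma^+\circ\bar V^{-1}$ inverts $\gamma^{+,-1}_\cdot+H_{(\eta/\sigma)\cdot}$, whose symbol is $\sqrt\mu+(\eta/\sigma)\mu^\alpha$, whereas $\bar L\circ\gamma^+=(\gamma^{+,-1}\circ\bar H)^{-1}$ inverts a subordinator whose symbol, computed by subordination, equals $(\sigma/\eta)\sqrt\mu+(\sqrt\mu)^{\beta}=(\sigma/\eta)\sqrt\mu+\mu^{\alpha}$; the two symbols are proportional \emph{precisely because} $\beta=2\alpha$. Feeding each symbol into the inverse-subordinator transform $\int_0^\infty e^{-\lambda t}\mathbf{E}[e^{-r L^\Phi_t}]\,dt=\Phi(\lambda)/\big(\lambda(\Phi(\lambda)+r)\big)$ (the generalization of \eqref{LapML}) recovers the master transform in both cases, with the rate of $\chi$ equal to $c/\sigma$ on the right and $c/\eta$ on the left. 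Finally $\bar\zeta\stackrel{law}{=}\inf\{t:\gamma^+_t<\bar H_\chi\}$ follows from the $\bar H$--$\bar L$ inverse duality $\{\bar H_\chi>a\}=\{\chi>\bar L_a\}$, which converts $\mathbf{P}_0(\chi>\bar L\circ\gamma^+_t)$ into $\mathbf{P}_0(\bar H_\chi>\gamma^+_t)$.

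The main obstacle I expect is the time-change bookkeeping in the third paragraph: justifying that the inverse of a composition is the reversed composition of the (right-continuous) inverses across the jump times of $\bar V$, where $\bar X$ rests at $\{0\}$; carrying the joint independence of the $\tfrac12$-stable $\gamma^{+,-1}$, the $\alpha$-stable $H$ and the threshold $\chi$ through the successive time changes; and tracking the several exponential rates ($c/\eta$ and $c/\sigma$) that the single symbol $\chi$ conceals, which are fixed by the stable scalings $H^\kappa_{cs}\stackrel{law}{=}c^{1/\kappa}H^\kappa_s$. The proportionality of the two symbols is exactly the step where $\beta=2\alpha$ is used, and it is also where the restriction $\alpha\le 1/2$ enters, since it keeps $\beta\le1$ and hence $H^\beta$ a genuine subordinator.
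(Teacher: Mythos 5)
Your proposal is correct in substance and overlaps with the paper's proof at its core, but it reaches the key transform by a genuinely different route and actually proves more of the statement than the paper's own argument does. The paper works directly at the path level: it computes the tail $\mathbf{P}_x(\gamma^+ \circ \bar{V}^{-1}_t > s) = \mathbf{P}_x(t > \gamma^{+,-1}_s + (\eta/\sigma)^{1/\alpha} H_s)$ (the same inversion identity you use), takes a double Laplace transform to obtain \eqref{VbarInv}, matches this with the law of $H^{1/2}_x + H^{1/2}_s + (\eta/\sigma)^{1/\alpha} H^{\alpha}_s$, and only then integrates out the exponential threshold to get \eqref{lifeBarAllalpha}. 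You instead fix the threshold from the start by specializing the potential of Theorem \ref{thm:FBVP-realLine} to $f \equiv 1$, collapsing it (correctly) to
\begin{align*}
\mathbf{E}_x\!\left[ e^{-\lambda \bar{\zeta}} \right] = \frac{c\, e^{-x\sqrt{\lambda}}}{c + \eta \lambda^\alpha + \sigma \sqrt{\lambda}},
\end{align*}
and then factor; this buys a shorter derivation of \eqref{lifeBarAllalpha} at the price of resting entirely on the analytic formula of Theorem \ref{thm:FBVP-realLine} rather than re-deriving the law of $\gamma^+ \circ \bar{V}^{-1}_t$ (which the paper's route gives as a by-product). Your third paragraph is a real addition: the paper's proof never actually establishes the middle identity $\mathbf{P}_x(\chi > \bar{L} \circ \gamma^+_t) = \mathbf{P}_x(\chi > \gamma^+ \circ \bar{V}^{-1}_t)$ — it is inherited informally from the discussion of \cite{Dov22} preceding the theorem — whereas your symbol comparison ($\sqrt{\mu} + (\eta/\sigma)\mu^\alpha$ versus $(\sigma/\eta)\sqrt{\mu} + \mu^{\beta/2}$, proportional exactly when $\beta = 2\alpha$) supplies a proof, and it correctly locates both where the hypothesis $\alpha = \beta/2 \leq 1/2$ enters and why the two occurrences of $\chi$ carry different rates.

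One point you should make fully explicit, since as written it is loose: the self-similarity reduction in your second paragraph must rescale the \emph{common} exponential time jointly, $H^{1/2}_{\sigma\chi} + H^{\alpha}_{\eta\chi} \stackrel{law}{=} H^{1/2}_{\sigma\chi} + (\eta/\sigma)^{1/\alpha} H^{\alpha}_{\sigma\chi}$, so your master transform yields \eqref{lifeBarAllalpha} with $\sigma\chi \sim Exp(c/\sigma)$ in place of the rate-$c$ variable $\chi$ of the statement. This discrepancy is inherited from the paper itself, whose proof of this theorem starts from the multiplicative functional $\exp(-c\, \gamma^+ \circ \bar{V}^{-1}_t)$ (rate $c$, matching the statement) while Theorem \ref{thm:FBVP-realLine} — your starting point — uses $\exp(-(c/\sigma)\, \gamma^+ \circ \bar{V}^{-1}_t)$; you flag the bookkeeping of rates $c/\eta$ and $c/\sigma$ at the end, which is the right instinct, but the final write-up should state the rate of $\chi$ in \eqref{lifeBarAllalpha} unambiguously rather than leaving it concealed in the scaling.
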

\begin{proof}
Focus on 
\begin{align*}
\mathbf{E}_x \left[\exp\left( -c \gamma^+ \circ \bar{V}^{-1}_t \right) \right] = \int_0^\infty e^{-cs} \mathbf{P}_x(\gamma^+ \circ \bar{V}^{-1}_t \in ds).
\end{align*}
Let us consider
\begin{align*}
\mathbf{P}_x(\gamma^+ \circ \bar{V}^{-1}_t > s) = \mathbf{P}_x(\bar{V}^{-1}_t > \gamma^{+,-1}_s) = \mathbf{P}_x(t > \bar{V} \circ \gamma^{+,-1}_s).
\end{align*}
Notice that $\gamma^+_t$ is non decreasing and therefore the process $\gamma^{+,-1}$ may have jumps. Thus, $\gamma^{+,-1} \circ \gamma^+_t$ is a little tricky whereas $\gamma^+ \circ \gamma^{+,-1}_t = t$ almost surely. We can therefore write
\begin{align*}
\mathbf{P}_x(t > \bar{V} \circ \gamma^{+,-1}_s) 
= & \mathbf{P}_x(t> \gamma^{+,-1}_s + (\eta/\sigma)^{1/\alpha} H_s)\\ 
= & \mathbf{P}_x(t - (\eta/\sigma)^{1/\alpha} H_s > \gamma^{+,-1}_s)\\
\displaystyle = & \int_0^t \mathbf{P}_x( t - z > \gamma^{+,-1}_s) \mathbf{P}_0((\eta/\sigma)^{1/\alpha} H_s \in dz)
\end{align*}
which is a convolution with Laplace transform
\begin{align}
&\int_0^\infty e^{-\lambda t} \mathbf{P}_x(\gamma^+ \circ \bar{V}^{-1}_t > s) dt \\
= & \left( \int_0^\infty e^{-\lambda t} \mathbf{P}_x(t > \gamma^{+,-1}_s) dt \right) \left(\int_0^\infty e^{-\lambda t} \mathbf{P}_0((\eta/\sigma)^{1/\alpha} H_s \in dt) \right)\notag \\
= & \int_0^\infty e^{-\lambda t} \mathbf{P}_x(\gamma^+_t > s) dt  \, e^{-s \lambda^\alpha \eta /\sigma}\notag \\
= & \int_s^\infty \frac{\sqrt{\lambda}}{\lambda} e^{-w \sqrt{\lambda}} dw\,  e^{-s \lambda^\alpha \eta /\sigma}\, e^{-x \sqrt{\lambda}}\notag \\
= & \frac{1}{\lambda} e^{-s \sqrt{\lambda} - s \lambda^\alpha \eta/\sigma} \, e^{-x \sqrt{\lambda}}, \quad \lambda>0.
\label{VbarInv}
\end{align}
This implies that
\begin{align*}
\int_0^\infty e^{-\lambda t} \mathbf{P}_x(\gamma^+ \circ \bar{V}^{-1}_t \in ds)\, dt = \frac{\sqrt{\lambda} + \lambda^\alpha \eta / \sigma}{\lambda} e^{-s \sqrt{\lambda} - s \lambda^\alpha \eta /\sigma}\, e^{-x\sqrt{\lambda}} ds, \quad \lambda>0.
\end{align*}
Since, for the independent subordinators $H^{1/2}_x$, $H^{1/2}_s$, $H^\alpha_s$,  
\begin{align*}
& \frac{1}{\lambda} \mathbf{E}_0 e^{-\lambda H^{1/2}_x -\lambda H^{1/2}_s - \lambda (\eta/\sigma)^{1/\alpha} H^\alpha_s} \\
= & \int_0^\infty e^{-\lambda t} \mathbf{P}_0(H^{1/2}_x + H^{1/2}_s + (\eta/\sigma)^{1/\alpha} H^\alpha_s < t) dt
\end{align*}
from \eqref{VbarInv}, we get that
\begin{align*}
\mathbf{P}_x(\gamma^+ \circ \bar{V}^{-1}_t > s) = \mathbf{P}_0(H^{1/2}_x + H^{1/2}_s + (\eta / \sigma)^{1/\alpha} H^\alpha_s < t)
\end{align*}
and
\begin{align*}
\mathbf{P}_x(\gamma^+ \circ \bar{V}^{-1}_t < \chi ) 
= & \mathbf{P}_0(H^{1/2}_x + H^{1/2}_\chi + (\eta/\sigma)^{1/\alpha} H^\alpha_\chi > t), \quad t>0,\; x \geq 0.
\end{align*}
This concludes the proof.
\end{proof}

\begin{remark}
From \eqref{lifeBarAllalpha} we see that
\begin{align*}
\mathbf{E}_x[\bar{\zeta}] = \mathbf{E}_0[H^{1/2}_x] + \mathbf{E}\Big[ \mathbf{E}_0[H^{1/2}_\chi | \chi ] + (\eta/\sigma)^{1/\alpha} \mathbf{E}_0[ H^\alpha_\chi | \chi] \Big]
\end{align*}
is infinite for $\alpha \in (0,1]$ also in case $c>0$, that is the case of elastic kill. \end{remark}

{\bf Acknowledgements}
 The author thanks his institution for the support under the Ateneo Grant 2020 and MUR for the support under PRIN 2022 - 2022XZSAFN: Anomalous Phenomena on Regular and Irregular Domains: Approximating Complexity for the Applied Sciences - CUP B53D23009540006.\\
 Web Site: \url{https://www.sbai.uniroma1.it/~mirko.dovidio/prinSite/index.html}.


\begin{thebibliography}{99}


\bibitem{BaeMee2001} 
B. Baeumer, M. M. Meerschaert,
Stochastic solutions for fractional Cauchy problems.  
\textit{Fract. Calc. Appl. Anal.} \textbf{4}, No 4 (2001), 481--500.

%

\bibitem{Baz2000}  
E. G. Bazhlekova,
Subordination principle for fractional evolution equations.
\textit{Fract. Calc. Appl. Anal.} \textbf{3}, No 3 (2000),  213--230.


%
%
\bibitem{Bingham71}
N. H. Bingham, 
Limit theorems for occupation times of markov processes. 
\textit{Zeitschrift fur Wahrscheinlichkeitstheorie und verwandte Gebiete} {\bf 17}  (1071), 1--22.

\bibitem{BluGet68}
R. Blumenthal, R. Getoor, 
\textit{Markov Processes and Potential Theory}. Academic Press (2007). Republication of 1968.

\bibitem{BonColDov}
S. Bonaccorsi, F. Colantoni, M. D'Ovidio,
\textit{Non-local boundary balue problems, stochastic resetting and Brownian motions on graphs}. Submitted, 2024.

%
%
%
%
%
\bibitem{CapDovDelRus17}
R. Capitanelli, M. D'Ovidio,
Delayed and Rushed motions through time change.
\textit{ALEA, Lat. Am. J. Probab. Math. Stat.} {\bf 17} (2020), 183--204.

%
\bibitem{caputoBook} 
M. Caputo, 
\textit{Elasticit\`{a} e Dissipazione}. Zanichelli, Bologna, (1969).

\bibitem{CapMai71} 
M. Caputo, F Mainardi,
Linear models of dissipation in anelastic solids.
\textit{La Rivista del Nuovo Cimento} \textbf{1}, (1971), 161--198.

\bibitem{CapMai71b} 
M. Caputo, F. Mainardi,
A new dissipation model based on memory mechanism. 
\textit{PAGEOPH} \textbf{91}, (1971), 134--147.

\bibitem{ChenFukBOOK}
Z.-Q. Chen, M. Fukushima,
\textit{Symmetric Markov Processes, Time Changes, and Boundary Theory}. London Mathematical Society Monographs, by Princeton University Press 2011.
%
\bibitem{Chen17} 
Z.-Q. Chen, 
{Time fractional equations and probabilistic representation,} 
Chaos, Solitons \& Fractals {\bf 102} (2017)  168-174.

%
\bibitem{ChugWilliams}
K.L. Chung, R.J. Williams,
Local Time and Tanaka’s Formula. In: Introduction to Stochastic Integration. Probability and Its Applications. Birkh\"{a}user Boston  (1990).

%

\bibitem{Dov12} 
M. D'Ovidio,
From Sturm-Liouville problems to fractional and anomalous diffusions.
\textit{Stochastic Processes and their Applications} \textbf{122}, No 10 (2012), 3513--3544.

\bibitem{Dov22}
M. D'Ovidio,
Fractional Boundary Value Problems.
\textit{Fract. Calc. Appl. Anal.} {\bf 1} 25, (2022) 29 -- 59.


\bibitem{FBVParXiv}
M. D'Ovidio,
Fractional Boundary Value Problems and Elastic Sticky Brownian motions.
\textit{arXiv:2205.04162}.

%
\bibitem{DovNan16} 
M. D'Ovidio, E. Nane, 
Fractional Cauchy problems on compact manifolds,
\textit{Stochastic Analysis and Applications}, {\bf 34}, (2016), no. 2, 232 -- 257.


\bibitem{Dzh66}
M. M. Dzherbashian, 
\textit{Integral Transforms and Representations of Functions in the Complex Plane}.
Nauka, Moscow, (1966) (in Russian).

\bibitem{DzhNers68} 
M. M. Dzherbashian,  A. B. Nersessian,
Fractional derivatives and the Cauchy problem for differential equations of fractional order. 
\textit{Izv. Akad. Nauk Armjan. SSR. Ser. Mat.} \textbf{3} (1968), No 1, 1--29 (in Russian)

%
%
%
%
%
%
%
\bibitem{GLY15} 
R. Gorenflo, Y. Luchko, M. Yamamoto,
Time-fractional diffusion equation in the fractional Sobolev spaces. 
\textit{Fract. Calc. Appl. Anal.} \textbf{18}, No 3 (2015), 799--820.

%
%
%
%
%

\bibitem{Kiryakova94} 
V. Kiryakova,
Generalized Fractional Calculus and Applications, 
vol. 301 of \textit{Pitman Research Notes in Mathematics Series}, Longman Scientific \& Technical, Harlow, UK  (1994).

\bibitem{Koc89} 
A. N. Kochubei,
The Cauchy problem for evolution equations of fractional order. 
\textit{Differential Equations} \textbf{25}, No 8 (1989), 967--974.

\bibitem{Koc2011}
{A. N. Kochubei,} 
\textit{General fractional calculus, evolution equations, and renewal processes.} 
Integral Equations and Operator Theory \textbf{71} (2011), 583--600.

\bibitem{Kolo19} 
V. N. Kolokoltsov,
The probabilistic point of view on the generalized fractional partial differential equations, 
\textit{Fract. Calc. Appl. Anal.} \textbf{22}, No 3 (2019), 543--600.
%
\bibitem{Kra03}
A. M. Kr\"{a}geloh, 
Two families of functions related to the fractional powers of generators of strongly continuous contraction semigroups. 
\textit{J. Math. Anal. Appl.} \textbf{283}, (2003), 459 -- 467. 


\bibitem{ItoMcK} 
K. It\^{o}, H. P. JR. McKean.
Brownian motions on a half line. 
\textit{Illinois J. Math.} {\bf 7}, (1963), 181-231.

\bibitem{luchko20} 
Y. Luchko,
Fractional derivatives and the fundamental theorem of Fractional Calculus. 
\textit{Fract. Calc. Appl. Anal.} \textbf{23}, No 4 (2020), 939--966.

\bibitem{MLP2001}  
F. Mainardi, Y. Luchko, G. Pagnini,
The fundamental solution of the space-time fractional diffusion equation. 
\textit{Fract. Calc. Appl. Anal.} \textbf{4}, No 2 (2001), 153--192. 


\bibitem{MNV09} 
M. M. Meerschaert, E. Nane, P. Vellaisamy,
Fractional Cauchy problems on bounded domains.
\textit{Ann. Probab.} \textbf{37}, No 3 (2009), 979--1007.
%
\bibitem{OB09} 
E. Orsingher,  L. Beghin,
Fractional diffusion equations and processes with randomly varying time. 
\textit{Ann. Probab.} \textbf{37} (2009), 206--249.

\bibitem{OrsDov2012}
E. Orsingher, M. D'Ovidio,
Probabilistic representation of fundamental solutions to $\frac{\partial u}{\partial t} = \kappa_n \frac{\partial^n u}{\partial x^n}$.
\textit{Electron. Commun. Probab.} \textbf{17} (2012),  1--12.

%
%
%
%
%
\bibitem{Toaldo2015}
{B. Toaldo.} 
\textit{Convolution-type derivatives, hitting-times of subordinators and time-changed $C_0$-semigroups.} 
Potential Analysis \textbf{42} (2015), 115--140.


\bibitem{Wentzell59}
A. D. Wentzell, 
On boundary conditions for multidimensional diffusion processes, 
\textit{Theor. Probab. Appl.} IV (1959), No 2, 164 -- 177.


\end{thebibliography}
\end{document}